\newtheorem{theorem}{Theorem}
\newtheorem{proposition}[theorem]{Proposition}
\newtheorem{lemma}[theorem]{Lemma}
\newtheorem{corollary}[theorem]{Corollary}
\theoremstyle{definition}
\newtheorem{definition}[theorem]{Definition}
\newtheorem{example}[theorem]{Example}
\newtheorem{remark}[theorem]{Remark}
\newtheorem{problem}[theorem]{Problem}
\newcommand{\Cat}{\mathbf{Cat}} 
\newcommand{\defin}[1]{\emph{#1}}
\newcommand{\setN}{\mathbb{N}}
\newcommand{\setZ}{\mathbb{Z}}
\newcommand{\avec}{\mathbf{a}}
\newcommand{\bvec}{\mathbf{b}}
\newcommand{\xvec}{\mathbf{x}}
\newcommand{\yvec}{\mathbf{y}}
\newcommand{\dvec}{\mathbf{d}}
\newcommand{\nwvec}{\mathbf{nw}}
\newcommand{\BW}{\mathrm{BW}}
\newcommand{\OBW}{\mathrm{OBW}}
\newcommand{\DP}{\mathrm{DP}}
\newcommand{\CDP}{\mathrm{CDP}}
\newcommand{\CMP}{\mathrm{CMP}}
\newcommand{\AVL}{\mathrm{AVL}}
\DeclareMathOperator{\inv}{inv}
\DeclareMathOperator{\maj}{maj}
\newcommand{\modpart}[2]{{\{#2\}_{#1}}}
\newcommand{\qbinom}{\genfrac{[}{]}{0pt}{}}
\newcommand{\multichoose}[2]{\ensuremath{\left(\kern-.3em\left(\genfrac{}{}{0pt}{}{#1}{#2}\right)\kern-.3em\right)}}
\title{The cyclic sieving phenomenon on circular Dyck paths}
\author{Per Alexandersson}
\author{Svante Linusson}
\author{Samu Potka}
\address{Dept. of Mathematics, Royal Institute of Technology, SE-100 44 Stockholm, Sweden}
\email{per.w.alexandersson@gmail.com, linusson@math.kth.se, potka@kth.se}
\date{\today}
\keywords{Circular Dyck paths, cyclic sieving, enumeration, major index, q-analogue}
\begin{document}

\begin{abstract}
We give a $q$-enumeration of circular Dyck paths, which is a superset of the classical Dyck paths enumerated by the Catalan numbers.
These objects have recently been studied by Alexandersson and Panova~\cite{AlexanderssonPanova2016}.
Furthermore, we show that this $q$-analogue exhibits the cyclic sieving phenomenon under a natural action of the cyclic group.
The enumeration and cyclic sieving is generalized to
Möbius paths. We also discuss properties of a generalization of cyclic sieving,
which we call \emph{subset cyclic sieving}.
Finally, we also introduce the notion of \emph{Lyndon-like cyclic sieving}
that concerns special recursive properties of combinatorial
objects exhibiting the cyclic sieving phenomenon.
\end{abstract}

\maketitle

\setcounter{tocdepth}{1}
\tableofcontents

\section{Introduction}

Unit interval graphs are in bijection with Dyck paths, and enumerated by the Catalan numbers,
see e.g.~\cite{StanleyCatalan}. Recently, a natural generalization of these graphs was
considered in \cite{AlexanderssonPanova2016,Ellzey2016} in the study of Stanley chromatic symmetric functions.
This generalization leads to an extension of Dyck paths to \emph{circular Dyck paths}, see below for a precise definition.

The number of circular Dyck paths of size $n$ is given by the formula
\begin{align}\label{eq:enumerationIntro}
(n+2)\binom{2n-1}{n-1} - 2^{2n-1},
\end{align}
 and they are in bijection with pairs of Dyck paths of size $n$ with certain constraints,
 see \texttt{A194460} in \cite{OEIS}.
Such pairs of Dyck paths have been studied in a different context, see \cite{Baur2012},
where it is mentioned that Christian Krattenthaler previously has given a proof of \cref{eq:enumerationIntro}
``via a lengthy combinatorial computation'' starting from a recursion. Circular Dyck paths are described naturally
by their \emph{area sequences}, which naturally extend the classical area sequences of Dyck paths,
see e.g.~\cite{qtCatalanBook}.

\medskip

The main results of this paper are listed below.
\begin{itemize}[leftmargin=*]
\item We prove a $q$-analogue of \cref{eq:enumerationIntro} in \cref{P:bijective_q_analog}.
This also gives the first combinatorial proof of the fact that the number of circular Dyck
paths is given by \eqref{eq:enumerationIntro}.
In \cref{sec:qCount}, we then generalize the $q$-analogue to circular Dyck paths with \emph{width} $w$,
obtaining
\begin{align}\label{eq:circularQCountGeneralIntro}
\sum_{s \in \setZ}
\sum_{j=1}^w
q^{s^2(w+2) + s(j+1)}
\left(
\qbinom{2n-1}{n-1-(w+2) s}_q
-
\qbinom{2n-1}{n+j+(w+2) s}_q
\right)
\end{align}
in \cref{prop:circularQCount}.
The $q$-analogue of \cref{eq:enumerationIntro} is the case $w = n$.

\item In \cref{thm:mainCSP}, we prove that circular Dyck paths of width $w$ together with
\eqref{eq:circularQCountGeneralIntro} exhibit the cyclic sieving phenomenon (CSP) under
a cyclic shift of the area sequence.

\item In \cref{sec:subset}, we introduce and give a few examples of a phenomenon called \emph{subset cyclic sieving},
where the values of a polynomial $f(q)$ at $n$:th roots of unity give the number of
elements in $Y\subseteq X$ fixed under a cyclic group action on $X$, and $f(q)$ is
equal to the cardinality of $Y$. 

\item In \cref{sec:mobius}, we prove a similar $q$-formula
and instance of the CSP for paths embedded in a Möbius strip.
In the process, we prove a new CSP instance
for binary words of length $n$ under a \emph{twisted cyclic shift},
with associated polynomial
\[
 \sum_{k=0}^n q^{\binom{k}{2}}\qbinom{n}{k}_q.
\]

\item In \cref{sec:lyndon}, we focus on families of CSP instances of a
special type, parametrized by
the size $n$ of the cyclic group.
We ask the associated polynomials to fulfill the relation
\[
 f_{n/m}(1) = f_n(\exp(2\pi i/m)) \text{ whenever $m|n$}.
\]
For example, this holds for the
family of polynomials in \eqref{eq:circularQCountGeneralIntro} for each fixed $w\geq 1$.
For natural reasons, we call such a sequence of CSP instances \emph{Lyndon-like},
and we provide several more examples of this type.
\end{itemize}

\medskip

Finally, we acknowledge that the \emph{On-line Encyclopedia of Integer Sequences}, \cite{OEIS},
has been of great help in this project.
This paper also benefited from experimentation with \texttt{Sage}~\cite{sage} and its
combinatorics features developed by the \texttt{Sage-Combinat} community~\cite{Sage-Combinat}.

\subsection{Brief background on the cyclic sieving phenomenon}

The \emph{cyclic sieving phenomenon} (CSP) was introduced in 2004 by Reiner, Stanton and White~\cite{ReinerStantonWhite2004}.
It generalizes Stembridge's $q = 1$ phenomenon \cite{Stembridge1994, Stembridge1994b, Stembridge1996}.
The definition consists of three ingredients: a finite set $X$, a cyclic group $C_n = \langle g \rangle$ of order $n$ acting on $X$,
and a polynomial $f(q)$ with non-negative integer coefficients satisfying $f(1) = \left|X\right|$,
for example a generating function for $X$. Let $\omega_n$ be a primitive $n$:th root of unity, for example $e^{\frac{2\pi i}{n}}$, and, as usual, let $[n] \coloneqq \{1, \dots, n\}$.

\begin{definition}
The triple $(X, C_n, f(q))$ exhibits the CSP if for every $k \in [n]$,
\[
[f(q)]_{q = \omega_n^k} = |\{x \in X : g^k \cdot x = x\}|,
\]
that is, $f(q)$ evaluated at the $k$:th power of a primitive $n$:th root of unity is the number of fixed points of $X$ under $g^k$.
\end{definition}

Reiner, Stanton and White also gave an alternative, equivalent definition of the cyclic sieving phenomenon.
The \emph{stabilizer-order} of a $C$-orbit is the size of the stabilizer group of the elements of the orbit.

\begin{proposition}[\cite{ReinerStantonWhite2004}]\label{prop:cspPolyFromGroupAction}
The triple $(X, C, f(q))$ exhibits the CSP if $a_{\ell}$ defined by
\[
f(q) \equiv \sum_{\ell = 0}^{n-1}a_{\ell}q^{\ell} \mod (q^n - 1)
\]
is the number of $C$-orbits on $X$ for which the stabilizer-order divides $\ell$.
\end{proposition}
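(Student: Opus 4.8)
The plan is to construct, directly from the group action, a canonical polynomial of degree less than $n$ whose value at each $n$:th root of unity equals the corresponding fixed-point count, and then to read off its coefficients. Since $f(\omega_n^k)$ depends only on the residue $f(q)\bmod(q^n-1)$, and since $\{\omega_n^k : k\in[n]\}$ is precisely the set of all $n$:th roots of unity, the CSP condition $f(\omega_n^k)=|\{x\in X: g^k\cdot x=x\}|$ for all $k$ is equivalent to the single polynomial congruence $f(q)\equiv F(q)\pmod{q^n-1}$ for any polynomial $F$ with $F(\omega_n^k)=|\{x\in X:g^k\cdot x=x\}|$ at every $k$. So it suffices to produce such an $F$ and compute its coefficients.

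First I would treat a single orbit $\mathcal{O}$. Writing $d$ for its stabilizer-order, the orbit has size $m=n/d$, and because $C_n=\langle g\rangle$ is cyclic its (common) stabilizer is the unique subgroup of order $d$, namely $\langle g^{m}\rangle$. Hence $g^k$ fixes some---equivalently every---element of $\mathcal{O}$ if and only if $g^k\in\langle g^m\rangle$, i.e.\ if and only if $m\mid k$; the contribution of $\mathcal{O}$ to the fixed-point count of $g^k$ is therefore $m$ when $m\mid k$ and $0$ otherwise. I would match this with the polynomial
\[
p_{\mathcal{O}}(q)=\frac{q^n-1}{q^{d}-1}=\sum_{j=0}^{m-1}q^{jd}.
\]
Evaluating the geometric sum at $q=\omega_n^k$ and setting $\zeta=\omega_n^{dk}$, one has $\zeta^m=\omega_n^{nk}=1$, so $p_{\mathcal{O}}(\omega_n^k)=\sum_{j=0}^{m-1}\zeta^j$ equals $m$ when $\zeta=1$ (equivalently $n\mid dk$, i.e.\ $m\mid k$) and $0$ otherwise. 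Thus $p_{\mathcal{O}}(\omega_n^k)$ reproduces exactly the contribution of $\mathcal{O}$.

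Summing over orbits, I would set $F(q)=\sum_{\mathcal{O}}p_{\mathcal{O}}(q)$, so that $F(\omega_n^k)=|\{x\in X:g^k\cdot x=x\}|$ for all $k\in[n]$ by the previous step. The exponents occurring in $p_{\mathcal{O}}$ are exactly $0,d,2d,\dots,(m-1)d=n-d$, all lying in $\{0,\dots,n-1\}$; hence $F$ already has degree less than $n$ (no reduction modulo $q^n-1$ is needed), and $q^\ell$ occurs in $p_{\mathcal{O}}$ precisely when $d\mid\ell$. Consequently the coefficient of $q^\ell$ in $F$ equals the number of orbits whose stabilizer-order divides $\ell$. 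Under the hypothesis of the proposition this number is $a_\ell$, so $F(q)=\sum_{\ell=0}^{n-1}a_\ell q^\ell\equiv f(q)\pmod{q^n-1}$, and therefore $f(\omega_n^k)=F(\omega_n^k)=|\{x\in X:g^k\cdot x=x\}|$, which is the CSP.

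I do not expect a genuine obstacle here; the argument is a bookkeeping computation once the right per-orbit polynomial is guessed. The only points requiring care are the consistent use of the stabilizer-order $d$ versus the orbit size $m=n/d$ together with the resulting divisibility condition $m\mid k$, and the observation that each $p_{\mathcal{O}}$ is already reduced modulo $q^n-1$, which is what legitimizes the coefficient extraction. I note that the same identity $F(q)=\sum_\ell a_\ell q^\ell$ yields the converse implication as well, since agreement of $f$ with $F$ at all $n$:th roots of unity forces $f\equiv F\pmod{q^n-1}$ and hence pins down every $a_\ell$.
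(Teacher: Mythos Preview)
The paper does not prove this proposition; it is stated with a citation to Reiner--Stanton--White and used as background. Your argument is correct and is essentially the standard proof: decompose $X$ into $C$-orbits, attach to each orbit of stabilizer-order $d$ the geometric-sum polynomial $\sum_{j=0}^{n/d-1}q^{jd}$, check that its root-of-unity values match the orbit's fixed-point contribution, and read off the coefficients.
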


By now, there is a multitude of CSP results. Below are some examples.
For more, see for example the survey by Sagan \cite{Sagan2011}.
For the first one, we say that $g \in C$ with $|g| = n$ acts \emph{freely} on $[N]$
if all of its cycles are of length $n$. A slight relaxation, we say that $g$ acts \emph{nearly freely} on $[N]$
if it either acts freely or if all of its cycles have length $n$ except for one singleton. The cyclic group $C$ is said to
act (nearly) freely on $[N]$ if it has a generator acting (nearly) freely on $[N]$. Finally, recall that $\binom{[N]}{k}$ and $\multichoose{[N]}{k}$ denote the sets of $k$-subsets and $k$-multisubsets of $[N]$, respectively.
\begin{theorem}[Theorem 1.1, \cite{ReinerStantonWhite2004}]
	Suppose $C$ is a cyclic group acting nearly freely on $[N]$. Then
	\[
	\left(
	\multichoose{[N]}{k}
	, C, \qbinom{N + k - 1}{k}_q\right)
	\text{ and }
	\left(\binom{[N]}{k}, C, \qbinom{N}{k}_q\right)\]
exhibit the CSP.
\end{theorem}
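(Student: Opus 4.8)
The plan is to verify the defining condition of the CSP directly: for each power $g^c$ with $c \in [n]$, I would compare the evaluation of the relevant Gaussian binomial at $\omega_n^c$ with the number of $g^c$-fixed $k$-subsets (respectively $k$-multisubsets). I write $c$ for the exponent to avoid clashing with the subset size $k$ in the statement. The argument rests on two independent inputs: a combinatorial description of the fixed points, which uses the near-freeness hypothesis in an essential way, and the $q$-Lucas theorem, which evaluates a $q$-binomial at a root of unity in terms of ordinary binomials.

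First I would pin down the cycle type of $g^c$ on $[N]$. Set $d \coloneqq n/\gcd(n,c)$, the order of $g^c$, so that $\omega_n^c$ is a primitive $d$-th root of unity. If $g$ acts freely, then every $g$-orbit has size $n$, hence $n \mid N$, and $g^c$ breaks each length-$n$ orbit into $\gcd(n,c)$ cycles of length $d$; thus $g^c$ has exactly $N/d$ cycles, all of length $d$. In the nearly-free case there is in addition a single fixed point, so $N \equiv 1 \pmod n$ and $g^c$ has $(N-1)/d$ cycles of length $d$ together with one singleton. A $k$-subset is $g^c$-invariant precisely when it is a union of $g^c$-cycles, and a $k$-multisubset is invariant precisely when its multiplicity function is constant on each cycle. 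In the free case this gives $\binom{N/d}{k/d}$ fixed subsets and $\binom{N/d + k/d - 1}{k/d}$ fixed multisubsets when $d \mid k$, and none otherwise.

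Next I would evaluate the polynomials. By $q$-Lucas, writing $N = N_1 d + N_0$ and $k = k_1 d + k_0$ with $0 \le N_0, k_0 < d$, one has $\qbinom{N}{k}_q\big|_{q=\omega_n^c} = \binom{N_1}{k_1}\qbinom{N_0}{k_0}_{\omega_n^c}$, and similarly $\qbinom{N+k-1}{k}_q\big|_{q=\omega_n^c} = \binom{M_1}{k_1}\qbinom{M_0}{k_0}_{\omega_n^c}$ with $M = N+k-1 = M_1 d + M_0$. In the free case $N_0 = 0$, so the subset evaluation is $\binom{N/d}{k/d}$ when $d \mid k$ and $0$ otherwise (via $\qbinom{0}{k_0} = \delta_{k_0,0}$), matching on the nose; for multisets $d\mid k$ forces $M_0 = d-1$, and the factor $\qbinom{d-1}{0}=1$ produces $\binom{N/d+k/d-1}{k/d}$, as required. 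In the nearly-free case $N_0 = 1$: for subsets the new factor $\qbinom{1}{k_0}$ equals $1$ for $k_0 \in \{0,1\}$ and $0$ otherwise, the two surviving cases corresponding exactly to excluding or including the singleton cycle; for multisets $M_0 = k_0$, so $\qbinom{k_0}{k_0} = 1$ and the evaluation $\binom{(N-1)/d + k_1}{k_1}$ matches the fixed-point count once the sum over the singleton's multiplicity is collapsed by the hockey-stick identity.

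The technical heart — and the step I expect to require the most care — is the $q$-Lucas evaluation itself. I would prove it from the product formula $\qbinom{M}{r}_q = \prod_{i=1}^{r}\frac{1-q^{M-r+i}}{1-q^i}$ by grouping the factors into blocks of $d$ consecutive indices and using that $\prod_{i=1}^{d-1}(1-\omega^i) = d$ for a primitive $d$-th root $\omega$; alternatively one can simply cite it as standard. Everything else is bookkeeping. The essential conceptual point, and the place where the hypothesis is used, is that near-freeness forces all non-singleton cycles of every power $g^c$ to share the common length $d$, which is precisely what makes the fixed-point count collapse to a single binomial coefficient rather than a convolution over varying cycle lengths.
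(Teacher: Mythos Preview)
The paper does not contain a proof of this statement: it is quoted as Theorem~1.1 of \cite{ReinerStantonWhite2004} in the background subsection on the CSP and is simply cited there without argument. So there is no ``paper's own proof'' to compare against.

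That said, your proposal is correct and is essentially the standard direct verification (the one found, in various degrees of detail, in the original Reiner--Stanton--White paper and in expositions such as Sagan's survey). The two ingredients you isolate --- the cycle structure of $g^c$ under the near-freeness hypothesis, and the $q$-Lucas evaluation --- are exactly the right ones, and your case analysis is accurate. In particular, the multiset count in the nearly-free case does collapse via the hockey-stick identity to $\binom{(N-1)/d + k_1}{k_1}$, and on the polynomial side $N+k-1 \equiv k_0 \pmod d$ gives the matching factor $\qbinom{k_0}{k_0}_\omega = 1$. One small cosmetic point: in the free multiset case with $d \nmid k$, you might spell out that $M_0 = k_0 - 1 < k_0$ forces the $q$-binomial factor to vanish, rather than only treating the $d \mid k$ subcase.
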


Rhoades proved a CSP result for rectangular standard Young tableaux \cite{Rhoades2010}.
For a more geometric version, for example in terms non-crossing matchings in the two-row case,
see the work by Petersen, Pylyavskyy and Rhoades \cite{PetersenPylyavskyyRhoades2008}.
\begin{theorem}[Theorem 1.3, \cite{Rhoades2010}]\label{thm:rhoades}
	If $\lambda = (n^m)$, then
	\[(\mathrm{SYT}(\lambda), \langle \partial \rangle, f^{\lambda}(q))\]
	exhibits the CSP, where $\mathrm{SYT}(\lambda)$ is the set of
	standard Young tableaux of the shape $\lambda$, $\langle \partial \rangle$ is the cyclic group generated
	by the jeu-de-taquin promotion operator, and $f^{\lambda}(q)$ is the natural $q$-analogue of the hook-length formula.
\end{theorem}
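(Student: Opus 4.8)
The plan is to follow the representation-theoretic template for cyclic sieving from Reiner–Stanton–White: realize promotion as the action of the long cycle on an irreducible symmetric group module, so that the fixed-point counts become characters and the root-of-unity evaluations become a trace. Throughout set $N = nm = |\lambda|$, let $c_N = (1\,2\,\cdots\,N)\in S_N$ be the long cycle, and let $S^\lambda$ be the irreducible Specht module. Two facts make the rectangular case special, and I would record them first. By a theorem of Haiman, promotion $\partial$ on $\mathrm{SYT}(\lambda)$ satisfies $\partial^N = \id$ when $\lambda$ is rectangular, so $\langle\partial\rangle$ is a genuine quotient of $C_N$ and the statement is well posed. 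Second, I would identify the polynomial: $f^\lambda(q)$ is, up to an overall power of $q$, the \emph{fake degree} of $S^\lambda$, equivalently the major-index generating function $\sum_{T\in\mathrm{SYT}(\lambda)} q^{\maj(T)}$, i.e.\ the graded multiplicity of $S^\lambda$ in the coinvariant algebra; matching the precise normalization to the convention in the statement is part of the bookkeeping.

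The heart of the argument is to exhibit a basis of $S^\lambda$ indexed by $\mathrm{SYT}(\lambda)$ on which $c_N$ acts as a signed permutation matrix inducing promotion. Concretely, I would take the Kazhdan–Lusztig (dual canonical) cellular basis $\{C_T\}_{T\in\mathrm{SYT}(\lambda)}$ and prove that $c_N\cdot C_T = \pm\,C_{\partial T}$. Granting this, $\mathbb{C}[\mathrm{SYT}(\lambda)]$ with the promotion action agrees, as a $C_N$-representation, with $\mathrm{Res}^{S_N}_{\langle c_N\rangle}S^\lambda$ \emph{provided} the product of the signs around each $\partial$-orbit is $+1$; this is exactly what forces $\operatorname{tr}(c_N^k \mid S^\lambda) = |\{T : \partial^k T = T\}|$. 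Establishing the long-cycle/promotion compatibility on the cellular basis together with this sign control is the main obstacle, and it is the technical core of Rhoades's proof. It is also precisely where rectangularity is indispensable: for general shapes promotion need not even have order dividing $N$, and the sign-products need not be trivial.

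It then remains to evaluate the trace, and here Springer's theory of regular elements does the work. The long cycle $c_N$ is a Coxeter element of $S_N$, hence regular of order equal to the Coxeter number $h = N$; by Springer's theorem its eigenvalues on $S^\lambda$ are $\zeta^{\maj(T)}$, $T\in\mathrm{SYT}(\lambda)$, for a suitable primitive $N$th root of unity $\zeta$, which may be taken to be $\omega_N$. Therefore
\[
\operatorname{tr}(c_N^k \mid S^\lambda) \;=\; \sum_{T\in\mathrm{SYT}(\lambda)} \omega_N^{\,k\,\maj(T)} \;=\; f^\lambda(\omega_N^k)
\]
for every $k$. Combining this with the fixed-point computation of the previous paragraph gives $[f^\lambda(q)]_{q=\omega_N^k} = |\{T : \partial^k T = T\}|$ for all $k\in[N]$, which is the asserted CSP for $(\mathrm{SYT}(\lambda), \langle\partial\rangle, f^\lambda(q))$. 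As a consistency check, $k=N$ recovers $f^\lambda(1) = |\mathrm{SYT}(\lambda)|$; the whole scheme thus reduces the theorem to the single hard input of realizing promotion as the long cycle on the Kazhdan–Lusztig basis.
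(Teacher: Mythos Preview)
The paper does not prove this theorem; it merely quotes it as a background result from Rhoades~\cite{Rhoades2010}. There is no in-paper argument to compare against. Your outline is, in fact, a faithful sketch of Rhoades's own proof: Haiman's theorem that promotion has order $N$ on rectangular standard Young tableaux, the Kazhdan--Lusztig basis of the Specht module on which the long cycle acts (up to sign) by promotion, the sign analysis, and Springer's theorem on regular elements to identify the character values with root-of-unity evaluations of the fake degree. So as a summary of the literature proof your proposal is accurate, but there is nothing in the present paper for it to match or differ from.
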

In the two-row case $\lambda = (n, n)$, note that there is a bijection between $\mathrm{SYT}(\lambda)$ and Dyck paths of size $n$.
Small Schröder paths also exhibit the CSP, see \cite{Pechenik2014}.

Another result specializing to lattice paths is the following. The \emph{major index} of a word $w$ of length $n$ is the sum of the indices $i \in [n - 1]$ such that
$w_i > w_{i+1}$. A pair $(i, j)$ is an \emph{inversion} of $w$ if $i < j$ but $w_i > w_j$, and $\inv(w)$ is the \emph{number of inversions} in $w$.
\begin{theorem}[A reformulation of Proposition 4.4, \cite{ReinerStantonWhite2004}]\label{thm:wordcsp}

Let $X_n(\mu)$ be the set of words of length $n$ and content $\mu,$ that is, each word has $\mu_i$ entries equal to $i$.
Let the cyclic group $C_n$ act on $X$ by cyclic shift, and let
\[
f_n(\mu; q) \coloneqq \qbinom{n}{\mu}_q = \sum_{w \in X_n(\mu)} q^{\maj(w)} = \sum_{w \in X_n(\mu)} q^{\inv(w)}.
\]
Then $(X_n,C_n,f_n(\mu;q))$ exhibits the CSP.
\end{theorem}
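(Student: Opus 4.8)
The plan is to run the standard root-of-unity verification of the CSP, using two classical inputs. First I would record that $f_n(\mu;q)$ is a legitimate CSP polynomial: the identity $\qbinom{n}{\mu}_q = \sum_{w} q^{\maj(w)} = \sum_{w} q^{\inv(w)}$ is MacMahon's equidistribution theorem for words of fixed content, which I would cite. It shows at once that $f_n(\mu;q) \in \setZ_{\geq 0}[q]$ and that $f_n(\mu;1) = \binom{n}{\mu} = |X_n(\mu)|$, so the only thing left is to match $f_n(\mu;\omega_n^k)$ with the number of fixed points of $g^k$ for each $k$.

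Next I would compute the fixed-point sets combinatorially. Writing $d \coloneqq \gcd(n,k)$, the element $g^k$ and the shift by $d$ generate the same cyclic subgroup of $\setZ/n\setZ$, so $g^k \cdot w = w$ holds precisely when $w$ is periodic with period $d$, i.e.\ when $w$ consists of $e \coloneqq n/d$ consecutive copies of its length-$d$ prefix. Comparing contents forces $e \mid \mu_i$ for every $i$, and then the prefix ranges over all length-$d$ words of content $(\mu_i/e)_i$. Therefore
\[
\left|\{\, w \in X_n(\mu) : g^k \cdot w = w \,\}\right| =
\begin{cases}
\binom{d}{(\mu_i/e)_i} & \text{if } e \mid \mu_i \text{ for all } i,\\
0 & \text{otherwise,}
\end{cases}
\]
where $\binom{d}{(\mu_i/e)_i}$ denotes the corresponding multinomial coefficient.

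On the algebraic side I would evaluate $f_n(\mu;\omega_n^k)$ with the $q$-analogue of Lucas's theorem. Since $\omega_n^k$ is a primitive $e$-th root of unity with $e = n/d$, the $q$-Lucas theorem gives
\[
\qbinom{n}{\mu}_{q = \omega_n^k} =
\binom{\lfloor n/e\rfloor}{(\lfloor \mu_i/e\rfloor)_i}\,
\qbinom{n \bmod e}{(\mu_i \bmod e)_i}_{\omega_n^k}.
\]
Because $e \mid n$ we have $n \bmod e = 0$ and $\lfloor n/e\rfloor = d$, so the trailing $q$-multinomial equals $1$ when every $\mu_i \bmod e = 0$ and vanishes otherwise. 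This reproduces exactly the two cases above, with the surviving factor $\binom{d}{(\mu_i/e)_i}$, and the CSP follows.

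Both substantive ingredients are classical and citable, so the only real work is the fixed-point bookkeeping above together with a correct application of $q$-Lucas; I expect the latter to be the main obstacle if a self-contained argument is wanted. I would prove $q$-Lucas by writing $\qbinom{n}{\mu}_q = [n]_q!/\prod_i [\mu_i]_q!$ and tracking the cyclotomic factor $\Phi_e(q)$: among $[1]_q,\dots,[n]_q$ exactly the indices divisible by $e$ contribute a zero at $\omega_n^k$, so grouping $1,\dots,n$ into blocks of length $e$ turns the ratio of these vanishing factors into the integer multinomial $\binom{\lfloor n/e\rfloor}{(\lfloor \mu_i/e\rfloor)_i}$, while the residual factors in each block assemble into the reduced $q$-multinomial $\qbinom{n \bmod e}{(\mu_i \bmod e)_i}_{\omega_n^k}$. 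Verifying that the zeros cancel with the correct multiplicity and that no stray phase survives is the one place that demands care; everything else is a direct comparison of the two closed forms.
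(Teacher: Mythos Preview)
The paper does not actually prove this statement; it is quoted as background from \cite{ReinerStantonWhite2004} with no accompanying argument. So there is nothing in the paper to compare your proof against, and the relevant question is only whether your proposal is correct on its own.

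It is. The fixed-point count is exactly right: words fixed by $g^k$ are the $d$-periodic words with $d=\gcd(n,k)$, which forces $e=n/d$ to divide every $\mu_i$ and leaves a free choice of a length-$d$ prefix with content $\mu/e$. On the algebraic side, $\omega_n^k$ is a primitive $e$-th root of unity, and the $q$-Lucas evaluation you wrote down is the standard multinomial extension of the paper's \cref{lem:qLucas}; since $e\mid n$, the residual factor $\qbinom{0}{(\mu_i\bmod e)_i}_q$ is $1$ or $0$ precisely according to whether all $\mu_i$ are divisible by $e$, matching the fixed-point count. This is essentially the proof one finds in the original Reiner--Stanton--White paper (and in Sagan's survey), so your approach is both correct and the expected one. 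The only mild overreach is the closing paragraph sketching a proof of $q$-Lucas itself; for a result you are explicitly citing, that is unnecessary, and the paper is content to cite \cite{Sagan1992} for it.
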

In the case $\mu = (\mu_1, \mu_2)$ with $\mu_1 + \mu_2 = n$, $X_n(\mu)$ is in an
obvious bijection with, for example, lattice paths starting at $(0, 0)$ and with steps from $\{(0, 1), (1, 0)\}$.
See \cite{AhlbachSwanson2018} for a refinement of \cref{thm:wordcsp}.

\subsection{Background on \texorpdfstring{$q$}{q}-analogues}

In the previous examples we saw how \emph{$q$-analogues} appear in the context of the cyclic sieving phenomenon.
We will also encounter them in this paper and hence introduce them here.
The starting point is the definition $[n]_q \coloneqq \frac{1-q^n}{1-q} = 1 + q + \dotsb + q^{n-1}$,
which is motivated by the observation
\[
\lim_{q \rightarrow 1} \frac{1-q^n}{1-q} = n.
\]
 Then it is natural to define the \emph{$q$-factorial}
\begin{align*} [n]_q! \coloneqq &~[1]_q \cdot [2]_q \cdots [n-1]_q \cdot [n]_q = \frac{1-q}{1-q}\cdot \frac{1-q^2}{1-q} \cdots \frac{1-q^{n-1}}{1-q}\cdot \frac{1-q^n}{1-q}\\ =&~1 \cdot (1+q)\cdots(1+q + \dots + q^{n-2})\cdot(1+q + \dots + q^{n-1}).
\end{align*}
While $n!$ counts the number of permutations on $[n]$, it is well-known (see, for example, \cite{StanleyEC1})
that $[n]_q! = \sum_{\sigma \in S_n} q^{\mathrm{inv}(\sigma)}$

Having defined $q$-factorials,
the next natural step is to define \emph{$q$-binomial coefficients}
(also called Gaussian binomial coefficients, Gaussian coefficients and Gaussian polynomials) by
\begin{align*}
\qbinom{n}{k}_q \coloneqq &~\frac{[n]_q!}{[n-k]_q![k]_q!}
\text{ for } 0 \leq k \leq n,
\end{align*}
and letting $\qbinom{n}{k}_q \coloneqq 0$ otherwise.
One combinatorial interpretation of the $q$-binomial coefficient is that it counts
the number of $k$-dimensional subspaces of the $n$-dimensional vector
space over the $q$-element field, see \cite{StanleyEC1} for the details.

Many identities for binomial coefficients have their counterparts for $q$-binomial coefficients.
For example, we have the symmetry
\[
\qbinom{n}{k}_q = \qbinom{n}{n-k}_q
\]
and the \emph{$q$-Pascal identities}
\[
\qbinom{n}{k}_q = q^k\qbinom{n-1}{k}_q + \qbinom{n-1}{k-1}_q \text{ and }
\qbinom{n}{k}_q = \qbinom{n-1}{k}_q + q^{n-k}\qbinom{n-1}{k-1}_q.
\]

A useful tool for proving CSP results is the $q$-Lucas theorem below.
We shall make use of the following notation.
Given $a \in \setZ$ and $d \in \setN$, let $\{ a \}_d$
denote the remainder of $a$ mod $d$, so that
$a  = d\lfloor a/d \rfloor + \{ a \}_d$.
\begin{lemma}[See, for example, \cite{Sagan1992}]\label{lem:qLucas}
We have that
\[
 \qbinom{n}{k}_q \equiv \binom{ \lfloor n/d \rfloor }{ \lfloor k/d \rfloor }
 \qbinom{ \{ n \}_d }{  \{ k \}_d }_q\ \mod \Phi_d,
\]
where $\Phi_d$ is the $d$:th cyclotomic polynomial.
\end{lemma}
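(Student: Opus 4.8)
The plan is to prove the congruence by passing to the ring $\setZ[q]/(\Phi_d)$ and extracting coefficients from a generating function. The only fact I need about $\Phi_d$ is that it divides $q^d-1$, so that $q^d\equiv 1\pmod{\Phi_d}$ and, more precisely, the images of $1,q,\dots,q^{d-1}$ are exactly the $d$ distinct $d$-th roots of unity in the field $\setQ[q]/(\Phi_d)$. The tool I would use is the $q$-binomial theorem
\[
\prod_{i=0}^{n-1}(1+q^i t)=\sum_{k\geq 0} q^{\binom{k}{2}}\qbinom{n}{k}_q t^k,
\]
which itself follows from the first $q$-Pascal identity by a short induction on $n$ (peeling off the factor $1+t$ and rescaling $t\mapsto qt$ in the remaining product).

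First I would split the product according to the division $n=d\lfloor n/d\rfloor+\{n\}_d$. Grouping the first $d\lfloor n/d\rfloor$ factors into $\lfloor n/d\rfloor$ consecutive blocks of length $d$ and using $q^{md}\equiv 1$ to shift each block back, every block becomes $\prod_{j=0}^{d-1}(1+q^j t)$ modulo $\Phi_d$, while the final $\{n\}_d$ factors become $\prod_{j=0}^{\{n\}_d-1}(1+q^j t)$. Since $\{1,q,\dots,q^{d-1}\}$ runs over all $d$-th roots of unity, a direct evaluation gives $\prod_{j=0}^{d-1}(1+q^j t)\equiv 1+(-1)^{d+1}t^d\pmod{\Phi_d}$. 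Hence, writing $Q=\lfloor n/d\rfloor$ and $r=\{n\}_d$,
\[
\sum_{k} q^{\binom{k}{2}}\qbinom{n}{k}_q t^k\equiv\bigl(1+(-1)^{d+1}t^d\bigr)^{Q}\sum_{c=0}^{r} q^{\binom{c}{2}}\qbinom{r}{c}_q t^c\pmod{\Phi_d}.
\]
Now I would extract the coefficient of $t^k$. Because the first factor contributes only powers of $t$ that are multiples of $d$ while the second contributes only powers $t^c$ with $0\le c<d$, the decomposition $k=d\lfloor k/d\rfloor+\{k\}_d$ is forced, and comparing coefficients yields
\[
q^{\binom{k}{2}}\qbinom{n}{k}_q\equiv(-1)^{(d+1)\lfloor k/d\rfloor}\binom{\lfloor n/d\rfloor}{\lfloor k/d\rfloor}\,q^{\binom{\{k\}_d}{2}}\qbinom{\{n\}_d}{\{k\}_d}_q\pmod{\Phi_d}.
\]

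The part that requires genuine care — and the main obstacle — is reconciling the quadratic weights together with the stray sign. Writing $k=ad+c$ with $a=\lfloor k/d\rfloor$ and $c=\{k\}_d$, a short computation gives $\binom{k}{2}-\binom{c}{2}=\tfrac12 ad(ad+2c-1)$, and I would show that $q^{\binom{k}{2}-\binom{c}{2}}\equiv(-1)^{(d+1)a}\pmod{\Phi_d}$ by splitting into the cases $d$ odd and $d$ even. For $d$ odd the exponent is a multiple of $d$ and the sign is trivial, so both sides equal $1$; for $d$ even one uses $q^{d/2}\equiv-1$ and checks that the resulting parity matches $(-1)^a$. Once this weight identity is in hand, the factor $q^{\binom{k}{2}}$ on the left cancels against $(-1)^{(d+1)\lfloor k/d\rfloor}q^{\binom{\{k\}_d}{2}}$ on the right (note $q$ is a unit modulo $\Phi_d$, with inverse $q^{d-1}$), leaving exactly the claimed identity. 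As an alternative that sidesteps the generating function, one could instead induct on $n$ in steps of $d$ using the second $q$-Pascal identity, with $q^d\equiv 1$ reducing the factor $q^{n-k}$ appropriately; this avoids the weights but replaces the single coefficient comparison with a more bookkeeping-heavy recursion, so I would present the generating-function argument as the cleaner route.
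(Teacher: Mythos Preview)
Your argument is correct. The generating-function approach via the $q$-binomial theorem, the block factorisation $\prod_{j=0}^{d-1}(1+q^jt)\equiv 1+(-1)^{d+1}t^d\pmod{\Phi_d}$, and the subsequent coefficient extraction all go through as you describe; the verification of the weight identity $q^{\binom{k}{2}-\binom{c}{2}}\equiv(-1)^{(d+1)\lfloor k/d\rfloor}$ by the odd/even case split on $d$ is the right way to finish, and your computation there is sound.

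There is nothing to compare against here: the paper does not supply a proof of this lemma but simply cites it as a known result (Sagan's combinatorial proof in the referenced paper is in fact essentially the same generating-function argument you give). Your write-up would serve perfectly well as a self-contained proof inserted at this point.
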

In particular, \cref{lem:qLucas} implies that with $q$ a primitive $d$:th root of unity,
\[
\qbinom{n}{k}_q = \binom{ \lfloor n/d \rfloor }{ \lfloor k/d \rfloor }
 \qbinom{ \{ n \}_d }{  \{ k \}_d }_q,
\]
a fact we use extensively in later sections.

The following two well-known results due to MacMahon are also related to our work.
\begin{lemma}[See Theorem 3.7 in \cite{Andrews1976}]\label{lem:easyWordQAnalogue}
Let $\BW(k, m)$ denote the set of binary words of length $k + m$ with $k$ $1$s.
Then
\[
\sum_{\bvec \in \BW(k,m)} q^{\maj(\bvec)} = \qbinom{k + m}{k}_q.
\]
\end{lemma}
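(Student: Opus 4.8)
The plan is to write $B(k,m;q) \coloneqq \sum_{\bvec \in \BW(k,m)} q^{\maj(\bvec)}$ for the left-hand side and prove $B(k,m;q) = \qbinom{k+m}{k}_q$ by induction on $n \coloneqq k+m$. The base cases $k = 0$ and $m = 0$ are immediate: there is then a unique binary word (all $0$s, respectively all $1$s), it has no descents, and so $B = 1$, agreeing with $\qbinom{m}{0}_q = \qbinom{k}{k}_q = 1$. For the inductive step I would establish the recursion $B(k,m;q) = q^k\, B(k,m-1;q) + B(k-1,m;q)$, which is exactly the first $q$-Pascal identity recorded above with $n = k+m$; together with the base cases this yields the claim.

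To prove the recursion I would condition on the last letter of $\bvec$. If $\bvec = \uvec 1$ with $\uvec \in \BW(k-1,m)$, then appending the final $1$ can never create a descent at the last position, since a letter equal to $1$ is not strictly greater than the following letter; moreover every descent of $\bvec$ at an earlier position is a descent of $\uvec$ and conversely. Hence $\maj(\bvec) = \maj(\uvec)$, and the words ending in $1$ contribute precisely $B(k-1,m;q)$, matching the second summand on the right.

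The main obstacle is the complementary case $\bvec = \uvec 0$ with $\uvec \in \BW(k,m-1)$, which must account for the term $q^k B(k,m-1;q)$. Here deleting the trailing $0$ is \emph{not} maj-preserving: if $\uvec$ ends in a $1$ then $\bvec$ acquires an extra descent at position $n-1$, so $\maj(\bvec) = \maj(\uvec) + (n-1)$, whereas if $\uvec$ ends in $0$ then $\maj(\bvec) = \maj(\uvec)$; thus the weight shift depends on the tail of $\uvec$ rather than being the constant $k$ one would want. The cleanest way around this is to pass to the inversion statistic. Since $\maj$ and $\inv$ are equidistributed on the words of any fixed content—Foata's second fundamental bijection gives an explicit $\maj$-to-$\inv$ matching map, consistent with the two equalities asserted in \cref{thm:wordcsp}—it suffices to show $\sum_{\bvec \in \BW(k,m)} q^{\inv(\bvec)} = \qbinom{k+m}{k}_q$. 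For $\inv$ the same last-letter conditioning is transparent: a trailing $0$ sits to the right of all $k$ ones and so creates exactly $k$ new inversions, while a trailing $1$ creates none. Writing $I(k,m;q)$ for the $\inv$ generating function, this gives $I(k,m;q) = q^k I(k,m-1;q) + I(k-1,m;q)$, again the first $q$-Pascal recursion, and the same base cases complete the induction. (Alternatively, one could keep working directly with $\maj$ and dispatch the ending-in-$0$ case by a secondary induction establishing $\sum_{\bvec \in \BW(k,m),\ \text{last letter } 0} q^{\maj(\bvec)} = q^k B(k,m-1;q)$, but the $\inv$ route avoids this bookkeeping entirely.)
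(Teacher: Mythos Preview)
Your argument is correct. The paper does not supply a proof of this lemma at all: it is stated as a classical result and attributed to Andrews, so there is nothing to compare against line by line.

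One small remark on presentation: your detour through $\inv$ is fine and is indeed the slickest way to get the $q$-Pascal recursion directly, but the invocation of Foata's bijection is a somewhat heavy tool for this particular lemma. If you wanted to keep the argument entirely within $\maj$, the alternative you mention---a secondary induction showing that words ending in $0$ contribute $q^k B(k,m-1;q)$---can be avoided by conditioning on the \emph{first} letter instead. Prepending a $0$ to $\uvec\in\BW(k,m-1)$ leaves $\maj$ unchanged after shifting indices by one, while prepending a $1$ to $\uvec\in\BW(k-1,m)$ creates a descent at position $1$ exactly when $\uvec$ begins with $0$; tracking this yields the other $q$-Pascal identity $B(k,m;q)=B(k,m-1;q)+q^{m}B(k-1,m;q)$ directly. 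Either route is acceptable.
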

\begin{proposition}[See e.g.\ the lemma on p.\ 255 in \cite{FurlingerHofbauer1985}]
The major index of binary words corresponding to Dyck paths generates the classical Carlitz $q$-analogue of the Catalan numbers:
\[
\sum_{D \in \DP(n)} q^{\maj(D)} = \frac{1}{[n+1]_q} \qbinom{2n}{n}_q.
\]
\end{proposition}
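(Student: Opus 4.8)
The plan is to follow the generating-function strategy behind the Fürlinger--Hofbauer reference, setting up a decomposition of Dyck paths that yields a functional equation for a suitably refined $\maj$-generating function. Throughout I encode a Dyck path as a binary word in which up-steps are $0$ and down-steps are $1$, so that $\DP(n)$ consists of the words in $\BW(n,n)$ all of whose prefixes contain at least as many $0$s as $1$s. With this convention one checks on small cases (for instance $n=1,2$) that $\sum_{D} q^{\maj(D)}$ matches $\frac{1}{[n+1]_q}\qbinom{2n}{n}_q$ exactly, whereas the opposite encoding of steps differs from it by an overall power of $q$; pinning down the convention is a cheap but necessary first step.

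Next I would record the effect of the first-return decomposition on the major index. Every nonempty $D\in\DP(n)$ factors uniquely as $D = 0\,A\,1\,B$ with $A\in\DP(j)$ and $B\in\DP(n-1-j)$, where the matched $1$ sits in position $2j+2$. A direct bookkeeping of descent positions (the leading $0$ and the junction after $A$ never create descents, the $1$ before $B$ creates one exactly when $B\neq\emptyset$, and the descents of $B$ are shifted by $2j+2$) gives
\[
\maj(D) = \maj(A) + \des(A) + \maj(B) + (2j+2)\bigl(\des(B) + [B\neq\emptyset]\bigr).
\]
The essential feature, and the source of the difficulty, is that this relation is \emph{not} additive in $\maj$ alone: the shift $(2j+2)\des(B)$ and the correction $\des(A)$ force any workable recursion to also keep track of the number of descents. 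I would therefore pass to a refined generating function $C(q,t,z)=\sum_{n}\sum_{D\in\DP(n)}q^{\maj(D)}t^{\des(D)}z^{n}$ and translate the decomposition into a functional equation for $C$.

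The hard part will be solving that functional equation. Because placing $B$ after a block of length $2j+2$ multiplies its descent-weight by $q^{2j+2}$, the decomposition effectively substitutes $z\mapsto q^{2j+2}z$ inside the $B$-factor, so the equation is of $q$-difference type and does not separate; this is precisely the point where the quadratic/kernel manipulation is required. Once it is solved, I would specialize $t=1$ to collapse the descent variable, extract the coefficient giving $\sum_{D\in\DP(n)}q^{\maj(D)}$, and verify that it equals $\frac{1}{[n+1]_q}\qbinom{2n}{n}_q$ using the $q$-Pascal identities.

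Finally, to justify this analytic detour I would flag the two tempting shortcuts that fail. By the elementary $q$-binomial identity $\frac{1}{[n+1]_q}\qbinom{2n}{n}_q = \qbinom{2n}{n}_q - q\,\qbinom{2n}{n+1}_q$ together with \cref{lem:easyWordQAnalogue} (which gives $\qbinom{2n}{n}_q = \sum_{\BW(n,n)}q^{\maj}$ and $\qbinom{2n}{n+1}_q=\sum_{\BW(n-1,n+1)}q^{\maj}$), the claim is equivalent to showing that the non-Dyck words of $\BW(n,n)$ carry total weight $q\,\qbinom{2n}{n+1}_q$. One is tempted to prove this by the reflection principle, which bijects the non-Dyck words with the words having one extra down-step; the catch is that reflection does not preserve $\maj$, even up to a global shift, so it does not yield the generating-function identity. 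One may also rewrite the target as $\frac{1}{[2n+1]_q}\qbinom{2n+1}{n}_q$, a cycle-lemma form in which $C_{2n+1}$ acts freely (here $\gcd(n,2n+1)=1$) on the relevant words; but matching each orbit's minimal major index to the major index of the associated Dyck path is itself a nontrivial identity, so this route does not bypass the functional-equation step either.
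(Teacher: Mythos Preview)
The paper does not actually prove this proposition; it is quoted as background from F\"urlinger--Hofbauer. So there is no ``paper's own proof'' to compare against line by line. That said, the technique the cited reference uses, and which this very paper reproduces in the proof of \cref{L:gen_q_ballot}, is precisely the bijective shortcut you discard in your last paragraph.

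Your functional-equation plan is sound and would eventually succeed, but your reason for rejecting the direct route is the real issue. You are right that Andr\'e's reflection (reflect the tail after the first bad step) does not behave well with respect to $\maj$. What F\"urlinger--Hofbauer do instead, and what the paper exploits in \cref{L:gen_q_ballot}, is a \emph{different} bijection: locate the first point of maximum depth on the path and swap the single north step immediately preceding it to an east step. This changes only one letter, shifts the endpoint by $(1,-1)$, and decreases $\maj$ by \emph{exactly} $1$. Iterating this (or applying it once to paths that touch the forbidden diagonal) gives a $\maj$-controlled bijection from the non-Dyck words in $\BW(n,n)$ to $\BW(n+1,n-1)$ that lowers $\maj$ by $1$, which is exactly the identity
\[
\sum_{\substack{w\in\BW(n,n)\\ w\notin\DP(n)}} q^{\maj(w)} \;=\; q\,\qbinom{2n}{n+1}_q
\]
you said you could not reach bijectively. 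Combined with \cref{lem:easyWordQAnalogue} and the identity $\qbinom{2n}{n}_q - q\qbinom{2n}{n+1}_q = \tfrac{1}{[n+1]_q}\qbinom{2n}{n}_q$, this finishes the proof in a few lines, with no $q$-difference equation to solve. So the detour through the bivariate generating function $C(q,t,z)$ is unnecessary here; the ``failed shortcut'' is in fact the intended argument, once you replace Andr\'e's reflection by the deepest-point step-switch.
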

When evaluating the right hand side at $e^{2\pi i k/n}$, we obtain non-negative integers which count fixed points under promotion.
That action can be described either by bijecting to $2\times n$ SYT, or by considering $2\pi / n$ rotations of perfect matchings
in a $2n$-gon. The special case of \cref{thm:rhoades} mentioned in the previous section is a refinement of this, using rotations
of $\pi / n$ instead.

\section{Enumeration of circular Dyck paths}

A \emph{Dyck path} may be described via its \emph{area sequence}. For example,
the path $(0,1,2,3,2,2)$ corresponds to
\begin{align}\label{eq:dyckPathExample}
\begin{ytableau}
*(lightgray)   & *(lightgray)   & *(lightgray)   &      &    & *(yellow) 6 \\
*(lightgray)   & *(lightgray)   &     &     & *(yellow) 5\\
     &    &     & *(yellow) 4\\
     &     & *(yellow) 3\\
    & *(yellow) 2\\
*(yellow) 1
\end{ytableau}
\end{align}
where the area sequence specifies the number of white squares in each row, from bottom to top.
The number of Dyck paths of size $n$ is given by the $n$:th Catalan number, $\frac{1}{n+1}\binom{2n}{n}$.

\medskip
A \emph{circular Dyck path} of size $n$ is specified via
an area sequence such that $a_1,\dotsc,a_n$ satisfy
\begin{itemize}
 \item $0 \leq a_i \leq n-1 $ for $1 \leq i \leq n$,
 \item $a_{i+1} \leq a_{i} + 1$ for $1 \leq i \leq n$,
\end{itemize}
where the index is taken mod $n$ in the second condition.
This set is denoted $\CDP(n)$.
The subset of paths with $a_1=0$ correspond to classical Dyck paths, $\DP(n)$.
Circular Dyck paths can also be illustrated as diagrams.
\begin{example}\label{ex:circularDyckPath}
For example, $\avec = (3,4,2,3,2,3)$ is illustrated as
\[
\begin{ytableau}
\none    &\none &\none &\none &\none &\none &\none & *(lightgray) & *(lightgray) &   &   &   & *(yellow) 1 \\
\none[3] &\none &\none &\none &\none &\none & *(lightgray) & *(lightgray) &   &   &   & *(yellow) 6 \\
\none[2] &\none &\none &\none &\none & *(lightgray) & *(lightgray) & *(lightgray)  &   &   & *(yellow) 5 \\
\none[3] &\none &\none &\none & *(lightgray) & *(lightgray) &   &   &  & *(yellow) 4 \\
\none[2] &\none &\none & *(lightgray) & *(lightgray) & *(lightgray) &   &   & *(yellow) 3 \\
\none[4] &\none & *(lightgray) &  &   &   &   & *(yellow) 2 \\
\none[3] & *(lightgray) & *(lightgray) &   &   &   & *(yellow) 1
\end{ytableau}
\]
where the top row is a repetition of the first row to illustrate the cyclic nature of the graph.
\end{example}

It is often convenient to describe circular Dyck paths as paths along the border of the white squares,
see \cref{fig:bij-with-paths}.
For this to be uniquely defined, one has to fix a starting point $\xvec=(x_0,0), 1\le x_0\le n$.
We denote such a path by $(\xvec, \bvec)$,
where $\bvec=(b_1,\dots,b_{2n})\in\{0,1\}^{2n}$ is a binary sequence with $n$ $0$s and $1$s, respectively.
Here $0$ corresponds to a right step and $1$ to an upstep, with
$b_{2n}=1$, that is, the last step is up.

\begin{figure}[!ht]
\centering
\includegraphics[width=0.5\textwidth]{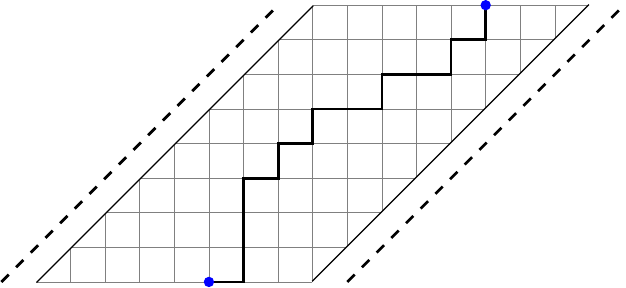}
\caption{Example of the bijection with lattice paths, with $n=8$.
The path in the middle, starting at $(6,0)$
and with binary word \texttt{0111010100100101},
corresponds to the area sequence
$\avec=(2,3,4,4,4,3,2,2)$.
The area sequence is determined by the number of (whole) squares to the right of the path in each row.
The two dashed diagonals are never touched by a lattice path corresponding to
an area sequence.
}\label{fig:bij-with-paths}
\end{figure}


The condition $b_{2n}=1$ is needed to make the starting point well defined.
Note that one may not have $x_0=n+1$, since the last step is up,
it would mean passing the illegal point $(2n+1,n-1)$.

We also study circular Dyck paths with a width different from the height,
which are defined in an analogous manner.
A \emph{circular Dyck path} of height $n$ and width $w$ is specified via
an area sequence such that $a_1,\dotsc,a_n$ satisfy
\begin{itemize}
 \item $0 \leq a_i \leq w-1 $ for $1 \leq i \leq n$,
 \item $a_{i+1} \leq a_{i} + 1$ for $1 \leq i \leq n$,
\end{itemize}
where the index is taken mod $n$ in the second condition.
This set is denoted $\CDP(n,w)$. Equivalently we can think of the elements in  $\CDP(n,w)$ as $(\xvec,b)$,
where $\xvec=(x_0,0)$, $1\le x_0\le w$, is the starting point and $\bvec=(b_1,\dots,b_{n+w})\in\{0,1\}^{n+w}$ is a binary sequence with
$n$ $1$s and $w$ $0$s such that the corresponding path stays between the diagonals $y=x$ and $y=x-(w+2)$,
and $b_{n+w}=1$.

There is a natural $C_n$-action on $\CDP(n,w)$, where the generator shifts the area sequence is cyclically by one step to the right.
We let $\alpha$ denote such a cyclic shift.

\subsection{Bijection with tuples of Dyck paths}

There is a bijection between circular Dyck paths and pairs of Dyck paths with certain peak conditions.
\emph{Peaks} in the path $P$ below are occurrences of east-steps followed by a north-step. The \emph{height}
of the first peak of $P$ is the number of east-steps before the first north-step. The height of
the last peak of $P$ is the number of north-steps at the end of $P$. Exchanging north and east-steps in
these definitions gives the corresponding definitions for $Q$.

For a Dyck path $Q$, let $h_f(Q)$ and $h_lQ)$ be the heights of the first peak and the last peak of $Q$.

\begin{lemma}[{\cite[Lemma 5]{AlexanderssonPanova2016}}]\label{lem:pair-bijection}
Circular Dyck paths of size $n$ are in bijection with pairs $(P,Q)$ of ordinary Dyck paths of size $n$,
such that
\[
h_f(P)+h_l(Q)\ge n \qquad \text{ and } \qquad h_l(P)+h_f(Q)\ge n.
\]
\end{lemma}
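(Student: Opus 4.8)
The plan is to prove the lemma by constructing an explicit bijection, working in the lattice-path model for $\CDP(n)$ set up above rather than directly with area sequences. Recall that an element of $\CDP(n)$ can be encoded as a single closed path $L$ with $n$ up-steps and $n$ right-steps, confined \emph{strictly} between the two diagonals $y=x$ and $y=x-(n+2)$, with the basepoint $\xvec$ pinned down by the convention that the last step is an up-step. First I would cut $L$ open at this canonical basepoint to obtain a genuine path from the bottom to the top of the strip, and then extract two ordinary Dyck paths $P$ and $Q$ by recording the interaction of $L$ with the lower diagonal $y=x-(n+2)$ and with the upper diagonal $y=x$, respectively. The guiding principle is that each of the two diagonals is an independent obstacle, so $L$ gives rise to one Dyck path per diagonal; this is exactly what produces a \emph{pair} of paths rather than a single one, and it is the structural reason the statement contains two inequalities instead of one.

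The first thing to verify is that $P$ and $Q$ are honest Dyck paths of size $n$: this holds because $L$ uses exactly $n$ steps of each type and never crosses either diagonal, so the ``stays weakly on one side'' condition defining a Dyck path translates precisely into the statement that $L$ never touches the relevant diagonal. The delicate point is the seam, i.e.\ the place where the cyclic path $L$ was cut open and must later be re-glued. Away from the seam, the individual Dyck conditions on $P$ and $Q$ already force $L$ to remain inside the strip; at the seam, however, one additionally needs the two free ends of $L$ to fit together without meeting either diagonal. I would show that the clearance against the upper diagonal is controlled by how steeply $P$ climbs at its beginning together with how steeply $Q$ descends at its end, namely by $h_f(P)$ and $h_l(Q)$, while the clearance against the lower diagonal is controlled by $h_l(P)$ and $h_f(Q)$. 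Since the strip has effective height $n$, the two seam constraints become exactly $h_f(P)+h_l(Q)\ge n$ and $h_l(P)+h_f(Q)\ge n$, which are the inequalities in the statement.

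For the inverse I would start from any pair $(P,Q)$ of size-$n$ Dyck paths satisfying the two inequalities, glue them back into a single path in the strip (the inequalities guaranteeing that the reconstructed path clears both diagonals at the seam), and close it up cyclically to recover the area sequence $\avec\in\CDP(n)$; the last-step-up convention makes the basepoint, and hence $\avec$, well defined. Checking that the forward and backward maps are mutually inverse is then bookkeeping. The main obstacle is precisely the seam analysis of the previous paragraph: making rigorous the claim that the first and last peak heights of $P$ and $Q$ measure exactly the clearance between the ends of $L$ and each diagonal, and that the correct threshold is $n$, requires careful index tracking and control of the off-by-one coming from the width $n+2$ of the strip together with the strict (non-touching) confinement. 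Establishing this equivalence in both directions is where essentially all the content of the lemma resides; once it is in place, bijectivity is immediate.
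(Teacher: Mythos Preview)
The paper does not prove this lemma; it is quoted from \cite{AlexanderssonPanova2016} and accompanied only by the illustration in \cref{fig:asPairsBijection}. So there is no argument here to compare against line by line, though the figure together with the immediately following $2k$-tuple generalisation (where $\CDP(kn,n)$ yields $2k$ Dyck paths of size $n$) indicates that the intended construction is a geometric unfolding across the two bounding diagonals.

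Your plan has the right overall shape and you correctly isolate the seam analysis as the substantive step, but the construction of $P$ and $Q$ is not yet well defined, and on its most natural reading it does not work. Saying that $P$ ``records the interaction of $L$ with the lower diagonal'' and $Q$ with the upper one does not by itself produce two Dyck paths: $L$ is a single path in a strip, and its distance to one bounding diagonal determines its distance to the other, so the two ``interactions'' are not independent pieces of data. Put differently, staying strictly between two diagonals is two constraints on \emph{one} path, not two paths.

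What the cited bijection does instead (as the figure and the $2k$-tuple lemma make visible) is to unfold the strip by reflecting across its bounding diagonals; one period of the circular path together with its mirror image in the adjacent reflected copy assembles into a genuine Dyck path in an $n\times n$ box, and doing this once at each diagonal produces the pair $(P,Q)$. In that picture the conditions $h_l(P)+h_f(Q)\ge n$ and $h_f(P)+h_l(Q)\ge n$ are precisely the compatibility constraints where consecutive reflected pieces meet, which is morally the seam analysis you describe. Once you replace ``interaction with each diagonal'' by this concrete reflection/unfolding, the rest of your outline, including the identification of the peak heights with the clearances and the threshold $n$, should go through as you sketch it.
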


The bijection in the previous lemma is illustrated in \cref{fig:asPairsBijection}.
\begin{figure}[!ht]
\centering
\includegraphics[width=0.5\textwidth]{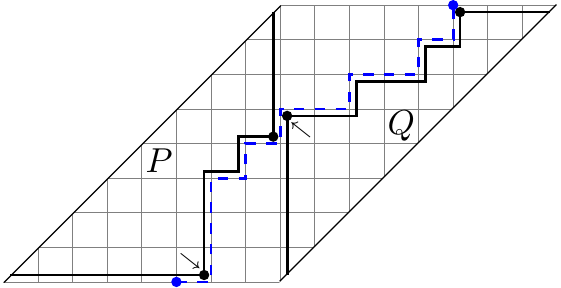}
\caption{The bijection with pairs of Dyck paths.
The circular area sequence of the circular Dyck path in the figure is $(2,3,4,4,4,3,2,2)$.
The first and last peaks have been marked, and arrows point the first peak in each path.
}\label{fig:asPairsBijection}
\end{figure}

\begin{lemma}
The set $\CDP(kn,n)$ is in bijection with $(2k)$-tuples of Dyck paths, $(P_1,\dotsc,P_{2k})$,
such that
\[
h_l(P_j)+h_f(P_{j+1})\ge n, \text{ for } 1\leq j <2k \quad \text{ and } \quad h_l(P_{2k})+h_f(P_{1})\ge n.
\]
\end{lemma}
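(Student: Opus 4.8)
The previous lemma (Lemma 5 from Alexandersson–Panova) handles the base case $k=1$: circular Dyck paths of size $n$ biject with pairs $(P_1, P_2)$ of Dyck paths satisfying the two inequalities $h_l(P_1)+h_f(P_2)\ge n$ and $h_l(P_2)+h_f(P_1)\ge n$. The key observation is that the general statement for $\CDP(kn,n)$ should arise by iterating exactly the same geometric decomposition. A circular Dyck path of height $kn$ and width $n$ is an area sequence $a_1,\dots,a_{kn}$ with $0\le a_i\le n-1$ and $a_{i+1}\le a_i+1$ (indices mod $kn$). The plan is to cut this long cyclic path into $2k$ ordinary Dyck-path pieces by breaking it at the appropriate ``diagonal-touching'' points, so that the single cyclic compatibility gets distributed across the $2k$ cyclically arranged peak-height inequalities.

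First I would recall the concrete form of the $k=1$ bijection: a circular Dyck path, viewed as a lattice path between the diagonals $y=x$ and $y=x-(w+2)$ (here $w=n$), is read off and split at the two points where it comes closest to the two boundary diagonals, producing one Dyck path $P$ from the ascending portion and one Dyck path $Q$ from the descending portion, with the peak-height inequalities recording how near the original path came to each diagonal. The essential point is that these inequalities are \emph{local}: each one couples only two consecutive pieces. So for $\CDP(kn,n)$ I would perform the analogous cut at \emph{each} of the $2k$ extremal contact points (alternately near the upper and lower diagonal) as the path winds around, obtaining an ordered cyclic sequence $P_1,\dots,P_{2k}$ of ordinary Dyck paths. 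Each adjacency between consecutive pieces contributes precisely one inequality $h_l(P_j)+h_f(P_{j+1})\ge n$, and the wrap-around from $P_{2k}$ back to $P_1$ gives the cyclic condition $h_l(P_{2k})+h_f(P_1)\ge n$, matching the claimed constraints.

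The cleanest way to organize this is by induction on $k$, peeling off one full ``period'' at a time using the $k=1$ bijection as the inductive engine: one shows that $\CDP(kn,n)$ decomposes as a circular gluing of $k$ copies of the $\CDP(n,n)$-structure, each of which the base case converts into a pair, yielding $2k$ pieces in total with the stated adjacency inequalities. Alternatively, and perhaps more transparently, one iterates the single cutting construction directly. The main obstacle I anticipate is bookkeeping at the \emph{seams}: verifying that when one cuts the long cyclic path at consecutive contact points, the two fragments meeting at each seam genuinely form valid ordinary Dyck paths (so that $h_l$ and $h_f$ are well-defined there) and that the inequality produced at that seam is exactly $h_l(P_j)+h_f(P_{j+1})\ge n$, with no off-by-one error in the width-$n$ constraint. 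One must also confirm that the construction is reversible—that any $2k$-tuple satisfying the cyclic inequalities can be concatenated back into a legal area sequence staying within the prescribed diagonals—which is where the inequalities are used in the reverse direction to guarantee the gluing never violates $a_{i+1}\le a_i+1$ or the bound $a_i\le n-1$ at the junctions.
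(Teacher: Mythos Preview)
Your direct-extension approach --- applying the same geometric decomposition from the $k=1$ case to the taller path in $\CDP(kn,n)$, so that the $2k$ pieces inherit local adjacency conditions at each seam --- is precisely what the paper does; its entire proof is a single sentence pointing to a figure that illustrates this unfolding. The inductive alternative and the seam-bookkeeping obstacles you anticipate are unnecessary: the paper treats the extension as immediate from the picture, and your aside about ``ascending'' versus ``descending'' portions should be rechecked against the actual $k=1$ construction (the circular Dyck path is a monotone NE path, and the two Dyck paths arise by reflection across the bounding diagonals rather than by cutting into up/down halves), but this does not affect the validity of the plan.
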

\begin{proof}
This follows from simply extending the idea in \cref{lem:pair-bijection},
as shown in \cref{fig:asTupleBij}.
\begin{figure}[!ht]
\centering
\includegraphics[width=0.5\textwidth]{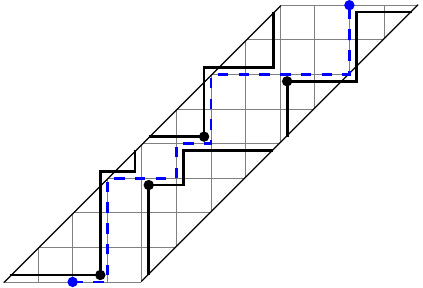}
\caption{Here we have a path (dashed) from $\CDP(8,4)$,
which is mapped to a $4$-tuple of Dyck paths of size $4$ (solid).
The first peak has been marked in each Dyck path.
}\label{fig:asTupleBij}
\end{figure}
\end{proof}

\subsection{Circular Möbius paths}

It is natural to ask what happens if we have a \emph{single} Dyck path $P$ of size $n$, such that $(P,P)$
satisfies the peak condition in \cref{lem:pair-bijection}.
It is straightforward to show that
such Dyck paths are in bijection with $(\xvec,\bvec)$ in $\CDP(n)$ such that
$\bvec = (b_1,\dotsc,b_{2n})$ fulfills the relation
\begin{equation}\label{eq:mobiusCondition}
 b_i  = 1-b_{n+i}\quad \text{ for all } \quad i \in [n].
\end{equation}
Note that, in particular, $b_n=0$ as we always have $b_{2n}=1$.
The starting point $\xvec=(x_0,0)$ is uniquely determined by $\bvec$ since $b_n=0$ must correspond to an east step that
ends on the vertical line $x=n+1$.
From this it is easy to see that all possible $\bvec$ correspond to exactly one path.
We let $\CMP(n)\subseteq \CDP(n)$ denote this set,
and refer to such paths as \defin{circular Möbius paths},
see \cref{fig:mobiusExample} for an example.

\begin{figure}[!ht]
\centering
\includegraphics[width=0.45\textwidth]{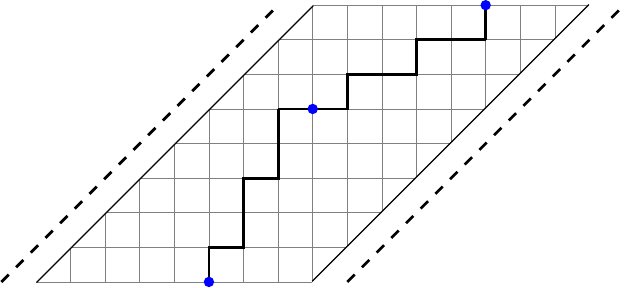}
\includegraphics[width=0.45\textwidth]{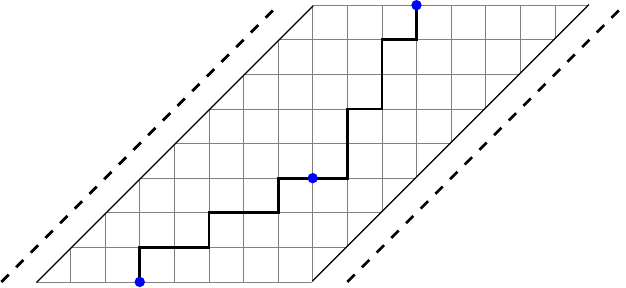}
\caption{Two Möbius paths in $\CMP(8)$.
Note that each path is the concatenation of two smaller paths, the second part being a reflection
of the first. Hence the ``Möbius'' name.
}\label{fig:mobiusExample}
\end{figure}

\begin{lemma}\label{lem:mobiusBijection}
We have that $|\CMP(n)| = 2^{n-1}$.
\end{lemma}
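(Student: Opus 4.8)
The plan is to exhibit an explicit bijection between $\CMP(n)$ and the set $\{0,1\}^{n-1}$ of binary words of length $n-1$. By the preceding discussion, an element of $\CMP(n)$ is a circular Dyck path $(\xvec,\bvec)\in\CDP(n)$ whose word $\bvec=(b_1,\dots,b_{2n})$ satisfies the Möbius relation $b_{n+i}=1-b_i$ for all $i\in[n]$. Since $b_{2n}=1$ forces $b_n=0$, such a word is determined by its truncated first half $(b_1,\dots,b_{n-1})$, with the whole second half recovered by complementation. First I would record that the starting point is then pinned down to $x_0=1+p_n$, where $p_j:=b_1+\dots+b_j$ is the number of up-steps among the first $j$ steps; indeed, after $n$ steps the $x$-coordinate is $x_0+(n-p_n)$, and demanding that the east step $b_n$ terminate on the line $x=n+1$ gives exactly $x_0=1+p_n$. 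Because $0\le p_n\le n-1$ this lies in the admissible range $1\le x_0\le n$. This already yields injectivity of the map $(\xvec,\bvec)\mapsto(b_1,\dots,b_{n-1})$ and the bound $|\CMP(n)|\le 2^{n-1}$, so the real content is to show that \emph{every} one of these $2^{n-1}$ words produces a valid path.

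To verify validity I would follow the path through its difference coordinate $c(j):=X(j)-Y(j)$, where $(X(j),Y(j))$ is the point reached after $j$ steps. Then $c(0)=x_0$, an east step increments $c$ by $1$ and an up-step decrements it by $1$, so $c(j)=x_0+j-2p_j$. Staying strictly between the diagonals $y=x$ and $y=x-(n+2)$ is precisely the requirement $1\le c(j)\le n+1$ for all $0\le j\le 2n$. The key step is a reflection symmetry: using $p_{n+i}=p_n+i-p_i$ (from the Möbius relation) together with $x_0=1+p_n$, one computes $c(n+i)+c(i)=2x_0+n-2p_n=n+2$, whence $c(n+i)=(n+2)-c(i)$. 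Since the band $1\le c\le n+1$ is symmetric about $(n+2)/2$, validity on the second half is equivalent to validity on the first half, and it suffices to check $1\le c(j)\le n+1$ only for $0\le j\le n$.

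For the first half the two inequalities are elementary. The lower bound $c(j)=1+p_n+j-2p_j\ge 1$ amounts to $2p_j\le p_n+j$, which follows from $p_j\le p_n$ and $p_j\le j$. The upper bound $c(j)\le n+1$ is equivalent to $2p_j\ge p_n+j-n$, which holds because at most $n-j$ up-steps can occur after position $j$, giving $p_j\ge p_n-(n-j)$, combined with $p_j\ge 0$. Hence each Möbius word yields a valid circular Dyck path; the correspondence $(\xvec,\bvec)\mapsto(b_1,\dots,b_{n-1})$ is therefore a bijection onto $\{0,1\}^{n-1}$, and $|\CMP(n)|=2^{n-1}$.

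I expect the main obstacle to be not the enumeration of words, which is immediate, but confirming that no candidate is lost, i.e.\ that each of the $2^{n-1}$ words remains inside the permitted band for its whole length. The reflection identity $c(n+i)=(n+2)-c(i)$ is what makes this manageable, halving the verification to the first $n$ steps; without exploiting the Möbius symmetry one would have to control the full length-$2n$ path directly.
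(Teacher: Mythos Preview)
Your argument is correct and follows the same bijection the paper uses: the paper's proof is the single sentence that a path in $\CMP(n)$ is determined by $(b_1,\dots,b_{n-1})$, relying on the preceding remark that ``all possible $\bvec$ correspond to exactly one path.'' You have simply supplied the verification of that remark in full, via the reflection identity $c(n+i)=(n+2)-c(i)$ and the elementary bounds on the first half, so your proof is a detailed expansion of exactly what the paper asserts.
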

\begin{proof}
As noted above the path in $\CMP(n)$ is determined uniquely by the first $n-1$ steps in $\bvec$. 
\end{proof}

\subsection{A \texorpdfstring{$q$}{q}-analogue}

As mentioned, circular Dyck paths and NE-lattice paths in general correspond to pairs
$(\xvec, \bvec)$ of a starting point $\xvec$ and a binary word $\bvec$ where $b_i = 1$
if the $i$:th step is a north step, and $b_i = 0$ otherwise.
For example, the path in \cref{fig:asPairsBijection} gives the word $0100100101011101$.

The \emph{major index} of a circular Dyck path $\avec$ (corresponding to $(\xvec, \bvec)$) is
defined as the major index of the binary word $\bvec$.
Recall that the major index of a binary word $w$ of length $n$ is the sum of all $i\in[n-1]$ such that $w_i>w_{i+1}$.

Define the following $q$-analogue of circular Dyck paths:
\begin{equation}\label{eq:cdpQanalogueDef}
|\CDP(n,w)|_q \coloneqq  \sum_{(\xvec, \bvec) \in \CDP(n,w)} q^{\maj(\bvec)}.
\end{equation}

\medskip

We end this section by proving a $q$-analogue of \cref{eq:enumerationIntro} in the introduction,
that enumerates $\CDP(n)$. The proof mimics the ideas of \cite[p. 255]{FurlingerHofbauer1985},
In the next section, we extend the method and do the same for $\CDP(n,w)$.

We begin the proof with a lemma generalizing $q$-ballot numbers.

\begin{lemma}\label{L:gen_q_ballot} For any $n\ge 1$,
summing over all NE-paths $\bvec$ starting in $(x,0)$, ending in $(i,j)$, and never touching the $x=y$ diagonal,
and with $i\ge j, x\ge 0$, we get
 \begin{equation}\label{eq:gen_q_ballot}
 \sum_{\bvec} q^{\maj(\bvec)}= \qbinom{i+j-x}{j}_q
- q^{x} \qbinom{i+j-x}{j-x}_q.
\end{equation}
\end{lemma}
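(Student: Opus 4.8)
The plan is to argue by the reflection principle, refined so as to keep track of the major index. First I would drop the diagonal constraint: an NE-path from $(x,0)$ to $(i,j)$ uses $i-x$ east steps and $j$ north steps, so by MacMahon's result (\cref{lem:easyWordQAnalogue}) the sum of $q^{\maj}$ over all such paths is $\qbinom{i+j-x}{j}_q$. This is exactly the first term on the right-hand side, so it remains to show that the paths which \emph{do} meet the line $x=y$ — the \emph{bad} paths — contribute precisely $q^{x}\qbinom{i+j-x}{j-x}_q$.

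For the underlying bijection I would use André's reflection at the first lattice point where a bad path meets $x=y$: reflecting the initial segment across the diagonal turns a bad path from $(x,0)$ to $(i,j)$ into an NE-path from the reflected start $(0,x)$ to $(i,j)$, which uses $i$ east and $j-x$ north steps (again of length $i+j-x$). Every path from $(0,x)$ to $(i,j)$ arises exactly once, since it begins strictly above the diagonal, ends weakly below it (as $i\ge j$), and hence must meet $x=y$. At $q=1$ this already recovers the classical ballot count $\binom{i+j-x}{j}-\binom{i+j-x}{j-x}$, matching the specialization of the claimed formula; the degenerate case $x=0$ is consistent, since then the start already lies on the diagonal and the formula vanishes.

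The delicate point — and the step I expect to be the main obstacle — is the major index. Reflecting an initial segment shifts all descent positions in the tail, so André's map does \emph{not} change $\maj$ by the constant $x$ that the factor $q^{x}$ would suggest, and the $q$-refinement cannot simply be read off the bijection. To get around this I would instead prove the refined identity by induction, peeling the \emph{last} step rather than reflecting term by term: appending a north step leaves $\maj$ unchanged, whereas appending an east step increases $\maj$ by the current length minus one exactly when the preceding step is a north step. Recording the direction of the last step therefore yields a closed recursion — equivalently a small linear system in the two generating functions for good paths ending north and ending east — which the two $q$-Pascal identities let me match against $\qbinom{i+j-x}{j}_q-q^{x}\qbinom{i+j-x}{j-x}_q$. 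The base cases $x=0$ (giving $0$) and $j=0$ (the single all-east path, giving $1$) are immediate, and the remaining degenerate ranges either vanish or agree on both sides. The whole difficulty is thus concentrated in this bookkeeping of the major index, which is the reason a naive reflection argument does not suffice.
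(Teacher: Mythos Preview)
Your approach is viable but genuinely different from the paper's. You correctly note that Andr\'e's reflection does not shift $\maj$ by a constant, and you propose instead to peel off the last step, track the two generating functions for good paths ending north versus east, and verify via the $q$-Pascal identities that $\qbinom{i+j-x}{j}_q-q^{x}\qbinom{i+j-x}{j-x}_q$ satisfies the resulting recursion. This works, though the case you wave past as a ``degenerate range'' --- the boundary $i=j+1$, where the east-ending recursion would reach the forbidden point $(j,j)$ --- needs the separate check $F_{j+1,j}=F_{j+1,j-1}$, which after one $q$-Pascal step reduces to the symmetry $\qbinom{2j-x}{j}_q=\qbinom{2j-x}{j-x}_q$. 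The paper instead inducts on $x$: rather than reflecting, it defines a map $\varphi$ on bad paths that locates the \emph{first point of maximal depth} $s-r$ and replaces the north step immediately preceding it by an east step. This bijects bad paths from $(x,0)$ to $(i,j)$ with paths from $(x,0)$ to $(i+1,j-1)$ touching the shifted diagonal $x=y+1$, lowering $\maj$ by exactly $1$; a coordinate shift then reduces to the $x-1$ case, so the factor $q^{x}$ accumulates one unit at a time. Your route is self-contained and purely algebraic; the paper's has the payoff that the same ``flip at the first deepest point'' bijection is the workhorse of \cref{sec:qCount} (see \cref{lem:diagonalHitMajorIndexCount}), so the lemma's proof also serves as the prototype for the general $\CDP(n,w)$ enumeration.
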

\begin{proof}
We proceed by induction over $x$. For $x=0$ it is clearly true since then there are no paths.
The maj-count of all NE-paths from $(x,0)$ to $(i,j)$ is
$\qbinom{i+j-x}{j}_q$, and we will now count and subtract the paths that touch the diagonal $x=y$. The idea for this proof comes from \cite[p. 255]{FurlingerHofbauer1985},
where they construct a major-index preserving bijection between sets of lattice paths.

In this proof we define the depth of a path to measure how far beyond the diagonal $x=y$ the path goes, or, more formally,
to be the largest value of $s-r$ for any point $(r,s)$ on the path. We now define a bijection $\varphi$ that maps a path
$\bvec$ ending in $(i,j)$ with depth $d\ge 0$ to a path ending in $(i+1,j-1)$ with depth $d-1$.

Let $(r,s)$ be the first point of maximal depth on the path. Since $x>0$, $(r,s)$ is not the starting point.
If it is the point directly after the starting point, we must have $r=s=1$.
Otherwise, the last two steps reaching $(r,s)$ are north steps.
The map $\varphi $ is defined by switching the north step just before $(r,s)$ to an
east step, that is,  relabeling $b_{r+s-x}$ from $1$ to $0$.
In $\varphi(\bvec)$  the position $(r,s-1)$ will then be the last point of maximal depth and it is
thus easy to find it and define $\varphi^{-1}$.
The corner in position $(r,s)$ has been replaced with a corner in $(r,s-1)$,
unless $r=s=1$. In any case, $\maj(\bvec)=\maj(\varphi(\bvec))+1$.
Thus $\varphi$ is a bijection between the paths from $(x,0)$ to $(i,j)$ that touch the diagonal $x=y$ and
paths from $(x,0)$ to $(i+1,j-1)$ that touch the diagonal $x=y+1$, with a shift of $q^1$.
By induction, the maj-count of paths from $(x,0)$ to $(i+1,j-1)$ that do not touch the diagonal $x=y+1$ is
\[
\qbinom{i+j-1-(x-1)}{j-1}_q - q^{x-1} \qbinom{i+j-1-(x-1)}{j-1-(x-1)}_q,
\]
and thus the maj-count of those touching the diagonal $x=y+1$ is
\[
q^{x-1} \qbinom{i+j-1-(x-1)}{j-1-(x-1)}_q=q^{x-1} \qbinom{i+j-x}{j-x}_q.
\]
Using $\varphi$ we thus get that the maj-count of the paths from $(x,0)$ to $(i,j)$ that
touch the diagonal $x=y$ is $q^{x} \qbinom{i+j-x}{j-x}_q$,
which gives the formula claimed in the lemma.
\end{proof}

\begin{proposition}\label{P:bijective_q_analog} For any $n\ge 1$,
\begin{align}\label{eq:circularQCount}
 |\CDP(n)|_q = n \qbinom{2n-1}{n-1}_q
- \sum_{j=1}^{n} q^{j} \qbinom{2n-1}{n+j}_q
- \sum_{j=1}^{n} \qbinom{2n-1}{j-2}_q.
\end{align}
\end{proposition}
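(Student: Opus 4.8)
The plan is to fix the starting point and count the maj-weighted lattice paths inside the bounding strip by inclusion--exclusion over the two diagonals, using \cref{L:gen_q_ballot} for one wall. First I would reduce to paths of length $2n-1$. Writing a path as $(\xvec,\bvec)$ with $\bvec=(b_1,\dots,b_{2n})$, the condition $b_{2n}=1$ makes the last step an upstep, so no descent occurs at position $2n-1$ and $\maj(\bvec)=\maj(b_1\cdots b_{2n-1})$. Thus, fixing $\xvec=(x_0,0)$, I am summing $q^{\maj}$ over NE-paths from $(x_0,0)$ to $(x_0+n,n-1)$ that stay strictly between the diagonals $y=x$ and $y=x-(n+2)$; writing $\delta=x-y$, this is the strip $0<\delta<n+2$ (the appended final upstep lands at $\delta=x_0$, which is automatically admissible). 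Summing over $x_0\in\{1,\dots,n\}$ gives $|\CDP(n)|_q$, and by inclusion--exclusion
\[
|\CDP(n)|_q
=\sum_{x_0=1}^{n}\Bigl(F(x_0)-U(x_0)-L(x_0)+B(x_0)\Bigr),
\]
where $F$ is the $q$-count of all such paths, and $U$, $L$, $B$ are the $q$-counts of those touching the upper diagonal $\delta=0$, the lower diagonal $\delta=n+2$, and both, respectively.

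The free and upper terms are immediate. By \cref{lem:easyWordQAnalogue}, $F(x_0)=\qbinom{2n-1}{n-1}_q$, so $\sum_{x_0}F(x_0)=n\qbinom{2n-1}{n-1}_q$, the first term. For the upper diagonal I apply \cref{L:gen_q_ballot} with $(i,j,x)=(x_0+n,\,n-1,\,x_0)$, whose never-touching count is $\qbinom{2n-1}{n-1}_q-q^{x_0}\qbinom{2n-1}{n-1-x_0}_q$; subtracting from $F(x_0)$ gives $U(x_0)=q^{x_0}\qbinom{2n-1}{n-1-x_0}_q$. Using the symmetry $\qbinom{2n-1}{n-1-x_0}_q=\qbinom{2n-1}{n+x_0}_q$ and reindexing $j=x_0$ yields $\sum_{x_0}U(x_0)=\sum_{j=1}^{n}q^{j}\qbinom{2n-1}{n+j}_q$, matching the second term exactly.

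Next I would show the cross term vanishes: $B(x_0)=0$ for all $x_0$. If a path touched both diagonals, then between the two touch points $\delta$ would have to travel the full distance between $0$ and $n+2$, which requires $n+2$ consecutive steps of a single type (either $n+2$ upsteps if the lower diagonal is reached last, or $n+2$ downsteps if first). But such a path has only $n$ east steps and $n-1$ north steps in total, so neither monotone stretch can occur. Hence no path touches both diagonals and the inclusion--exclusion collapses to three terms.

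The heart of the proof, and the main obstacle, is the lower term: I must prove $L(x_0)=\qbinom{2n-1}{x_0-2}_q$, whence $\sum_{x_0}L(x_0)=\sum_{j=1}^{n}\qbinom{2n-1}{j-2}_q$, the third term. At $q=1$ this is the classical reflection count (reflecting the initial segment across $y=x-(n+2)$ bijects touching paths with all paths from $(n+2,\,x_0-n-2)$ to $(x_0+n,n-1)$, of which there are $\binom{2n-1}{x_0-2}$). The difficulty is that, in contrast to the upper diagonal, the generating function carries \emph{no} power of $q$, and none of the obvious symmetries transports \cref{L:gen_q_ballot} to this wall: reflecting the strip to move the lower diagonal onto $y=x$ is non-integral, while reversal or complementation of $\bvec$ sends $\maj$ to a co-major--type statistic rather than back to $\maj$. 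I therefore expect to establish $L(x_0)=\qbinom{2n-1}{x_0-2}_q$ by a dedicated major-index bijection mirroring the proof of \cref{L:gen_q_ballot}: locate the first point where $\delta$ attains its maximum, surgically change a single step to reduce the overshoot beyond $\delta=n+2$ by one, and verify that the induced change in $\maj$ telescopes to the trivial shift $q^{0}$. Checking that this surgery is a genuine bijection with the asserted vanishing $\maj$-shift is the delicate computation; once it is in hand, summing the three contributions over $x_0$ yields \eqref{eq:circularQCount}.
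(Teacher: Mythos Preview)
Your plan is correct and matches the paper's proof almost exactly: fix the starting point, count all paths, subtract those touching the upper diagonal via \cref{L:gen_q_ballot} and those touching the lower diagonal via the dual surgery (change the east step preceding the first point of maximal $x-y$ to a north step, which indeed leaves $\maj$ unchanged), and note that no path can touch both diagonals. Two minor remarks: in your $B(x_0)=0$ argument the word ``consecutive'' is a slip---you only need \emph{at least} $n+2$ steps of one type between the two touches, and your total step counts ($n$ east, $n-1$ north) already rule this out---and the lower-diagonal bijection is less delicate than you fear, since the local pattern around the first deepest point is forced to be $00\mapsto 01$ (or $0\mapsto 1$ at the start), creating no new descent; the paper dispatches it in a single sentence.
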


\begin{proof}
For each possible starting point $(x,0)$, $1\le x\le n$ the maj-count of all paths to $(x+n,n-1)$
(remember that the last step of $\bvec$ is a north step) is $\qbinom{2n-1}{n-1}_q$.
This gives the first term.
We will now subtract the paths that touches the surrounding diagonals.
Note that no path can touch both diagonals.
By \cref{L:gen_q_ballot} the maj-count of paths touching the
diagonal $x=y$ is $\sum_{x=1}^{n-1} q^{j} \qbinom{2n-1}{n-1-x}_q$, which gives the first sum.

For paths touching the diagonal $x=y+n+2$ we can use the bijection defined dually to $\varphi$ in
the proof of \cref{L:gen_q_ballot}. That is, we change an east step to a north step for the
first corner being diagonally furthest to the right. This time there clearly is no
shift in the $\maj$ of the path and we get the second sum.
\end{proof}

\section{A formula for the \texorpdfstring{$q$}{q}-analogue for circular Dyck paths}\label{sec:qCount}

The goal of this section is to express $|\CDP(n,w)|_q$ as a sum of $q$-binomial coefficients.
To achieve this, we need to consider the major index generating function
for arbitrary north-east lattice paths starting at the origin with some constraints which will be used in an inclusion-exclusion argument.

\medskip

A \defin{diagonal} is a set of lattice points of the form
$\{ \xvec + k (1,1) : k \in \setZ \}$ for some $\xvec \in \setZ^2$.
It is clear that a diagonal is uniquely specified by any point on the diagonal.
For a lattice point $\yvec$ in the non-negative quadrant, let
\begin{equation}\label{eq:hPathGenFunc}
 H(\yvec; d_1,d_2,\dotsc,d_\ell),\qquad d_i \in \setZ
\end{equation}
denote the $q$-enumeration (using major index) of north-east lattice paths $L$ from $(0,0)$ to $\yvec$,
such that $L$ includes points from each of the $\ell$ diagonals
specified by the points
\begin{equation*}
d_i(1,0)\qquad i = 1,2,\dotsc,\ell,
\end{equation*}
in this order. In other words, there must be points $p_1,\dotsc,p_\ell$
on $L$ visited in this order, such that $p_i$ is on the diagonal specified by $d_i$.
Note that by definition
\[
 H(\yvec; 0,d_1,d_2,\dotsc,d_\ell) = H(\yvec;d_1,d_2,\dotsc,d_\ell)
\]
since the starting point $(0,0)$ is on the diagonal specified by $0$. Abusing notation, we henceforth let the diagonal $d_i$ be
the unique diagonal specified by $d_i$.

We say that the configuration $(\yvec;d_1,\dotsc,d_\ell)$
is \emph{alternating} if any of the four conditions below is fulfilled:
\begin{enumerate}
 \item $0 \geq d_1 < d_2 > d_3 < d_4 > \dotsb > d_\ell$ and $\yvec$ is to the right of diagonal $d_\ell$,
 \item $0 \geq d_1 < d_2 > d_3 < d_4 > \dotsb < d_\ell$ and $\yvec$ is to the left of diagonal $d_\ell$,
 \item $0 \leq d_1 > d_2 < d_3 > d_4 < \dotsb < d_\ell$ and $\yvec$ is to the left of diagonal $d_\ell$ or
 \item $0 \leq d_1 > d_2 < d_3 > d_4 < \dotsb > d_\ell$ and $\yvec$ is to the right of diagonal $d_\ell$.
\end{enumerate}
By convention, if $\ell=0$, the configuration is considered to be alternating as well. Note that
$H(\yvec; d_1) = H(\yvec)$ if $(0, 0)$ and $\yvec$ are on different sides of $d_1$. Note also more
generally that given a non-alternating subsequence $d_k > d_{k+1} > d_{k+2}$ or $d_k < d_{k+1} < d_{k+2}$
of diagonals, $H(\yvec; \dvec) = H(\yvec; \dvec')$ where $\dvec'$ denotes $\dvec$ with $d_{k+1}$ removed.

Let $\nwvec$ denote the vector $(-1,1)$ and recall that we
identify east steps with $0$ and north steps with $1$.

\begin{lemma}\label{lem:diagonalHitMajorIndexCount}
Suppose that $(\yvec;d_1,\dotsc,d_\ell)$ is alternating and $\ell \geq 1$.
Then the generating function $H(\yvec; d_1,\dotsc,d_\ell)$ is equal to
\begin{align}\label{eq:diagonalHitRecursion}
\begin{cases}
H(\yvec - \nwvec; d_1+1,d_2+2,d_3+2,\dotsc,d_\ell+2) \times q \text{ if $d_1<0$}\\
H(\yvec + \nwvec; d_1-1,d_2-2,d_3-2,\dotsc,d_\ell-2) \phantom{\times q} \text{  otherwise.}\\
\end{cases}
\end{align}
Furthermore, both new configurations above are alternating as well.
\end{lemma}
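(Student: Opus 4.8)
The plan is to prove both cases of \cref{lem:diagonalHitMajorIndexCount} by a single \emph{corner-switch bijection} that generalizes the map $\varphi$ from the proof of \cref{L:gen_q_ballot}. The starting observation is that $(\yvec;d_1,\dots,d_\ell)$ being alternating is exactly the statement that the sequence of diagonals $0,d_1,d_2,\dots,d_\ell,Y_1-Y_2$ (where $\yvec=(Y_1,Y_2)$, the first comparison weak and the rest strict) \emph{zigzags}. In particular $d_1$ is a local extreme of this sequence: when $d_1<0$ we have $0>d_1<d_2$ (reading $\yvec$'s diagonal for $d_2$ when $\ell=1$), so $d_1$ is a strict \emph{valley}, while when $d_1\ge 0$ we have $0<d_1>d_2$, so $d_1$ is a strict \emph{peak}. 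Every path counted by $H(\yvec;d_1,\dots,d_\ell)$ must therefore dip down to $d_1$ and come back up (valley case), or climb up to $d_1$ and come back down (peak case), before meeting $d_2$. The whole proof is the bookkeeping for one corner switch performed inside this first excursion.

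I will treat the valley case $d_1<0$; the peak case is dual. On a path $L$ let $(r,s)$ be the first point at which $x-y$ is minimal along the initial segment running from $(0,0)$ up to the first visit of diagonal $d_2$ (take this segment to be all of $L$ when $\ell=1$). Since $L$ meets $d_1<d_2$ on this segment, the minimum is $\le d_1<0$, so $(r,s)$ is not the origin, the two steps reaching it are north steps, and the step leaving it is east; thus $(r,s)$ is a genuine valley. Define $\psi(L)$ by switching to an east step the north step that \emph{enters} $(r,s)$ (relabelling that $b_k$ from $1$ to $0$), exactly as in $\varphi$. Then the endpoint moves by $-\nwvec$, and the local pattern $\dots 1\,1\,0\dots$ becomes $\dots 1\,0\,0\dots$, so $\maj$ drops by exactly $1$; summing $q^{\maj}$ over $L$ produces the factor $q$ in front of $H(\yvec-\nwvec;\dots)$.

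The key point is the effect on the diagonal constraints, which explains the asymmetric shifts $+1$ versus $+2$. Switching the entering step translates every point from $(r,s)$ onward by $(1,-1)$, raising its value of $x-y$ by $2$, while fixing every earlier point. The first visits to $d_2,\dots,d_\ell$ all occur at or after $(r,s)$ (by the cut-off at $d_2$ and the ordering of the constraints), so they now witness $d_2+2,\dots,d_\ell+2$. On the other hand the descent to $(r,s)$ crosses diagonal $d_1+1$ at a point strictly before the switch, which is left fixed, so the image still meets $d_1+1$, and in the correct order. Checking that $(\yvec-\nwvec;d_1+1,d_2+2,\dots)$ is again alternating is then a direct computation: adding $1$ to $d_1$ and $2$ to the remaining entries preserves every strict inequality of the zigzag as well as the side of the shifted endpoint. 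The peak case runs identically with ``minimal'' replaced by ``maximal'', north/east interchanged, and the local pattern $\dots 0\,0\,1\dots \mapsto \dots 0\,1\,1\dots$ showing that $\maj$ is unchanged, which is why no factor of $q$ appears.

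The main obstacle is verifying that $\psi$ is a genuine bijection onto the paths counted by the shifted $H$, i.e.\ that $L$ meets $d_1,\dots,d_\ell$ in order \emph{if and only if} $\psi(L)$ meets $d_1+1,d_2+2,\dots,d_\ell+2$ in order. I plan to exhibit the inverse explicitly as the dual switch — turning into a north step the east step leaving the \emph{last} point of minimal $x-y$ in the corresponding initial segment — and to show, as in the passage defining $\varphi^{-1}$, that after applying $\psi$ this last minimal point is precisely the predecessor of $(r,s)$, so the two maps undo each other. The genuinely delicate part is matching the two diagonal-hit conditions through the switch, together with the degenerate situations: when $d_1=0$ the constraint is vacuous (the origin lies on $d_1$) and must be reconciled with the leading-zero-removal remark preceding the lemma; and one must ensure the chosen extreme point is well defined and interior when the path dips below $d_1$ or when $\yvec$ itself is extremal. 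Handling these boundary cases carefully is where the bulk of the work lies.
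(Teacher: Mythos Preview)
Your proposal is correct and follows essentially the same approach as the paper: the paper also takes $(r,s)$ to be the first point of maximal depth $\mathrm{sgn}(d_1)(x-y)$ on the initial segment before the first visit to $d_2$, then in the case $d_1<0$ switches the north step entering $(r,s)$ to an east step (called $\phi$) and in the case $d_1>0$ switches the east step entering $(r,s)$ to a north step (called $\psi$), with the same effect on $\maj$ and the same inverse. Your discussion of why $d_1$ shifts by $1$ while the remaining diagonals shift by $2$ is more explicit than the paper's, but the underlying bijection is identical.
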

\begin{proof}

The proof uses a similar map as the proof of \cref{L:gen_q_ballot}.
We have two different cases to consider: $d_1 < 0$ and $d_1 > 0$. Let us start with the former.
In all cases, we let $(r, s)$ be the first point maximizing the \emph{depth} $\textrm{sgn}(d_1)(x - y)$
among points $(x, y) \in L$ before $L$ meets the diagonal $d_2$. In other words, $(r, s)$ is the first
point furthest away from $d_1$ on the side opposite of $(0, 0)$, or on $d_1$ if $L$ does not cross $d_1$.

\textbf{Case $d_1 < 0$:}
Suppose $L$ is a path counted by $H(\yvec; d_1,\dotsc,d_\ell)$. A north step $1$
has to precede $(r, s)$, while an east step $0$ has to follow  it.

Let $\phi$ be the map replacing the north-step $1$ preceding $(r, s)$ with an east-step.
Note that $\phi$ is similar to $\varphi$ in the proof of \cref{L:gen_q_ballot} but
now only the points of $L$ before it meets $d_2$ are considered.

Now, $\maj(\phi(L)) = \maj(L) - 1$, the new endpoint is given by $\yvec-\nwvec$
and $\phi(L)$ hits the shifted diagonals $d_1+1$, $d_2+2,d_3+2,\dotsc,d_\ell+2$,
see \cref{fig:phiMap}.
	\begin{figure}[!ht]
		\centering
		\includegraphics[width=0.5\textwidth]{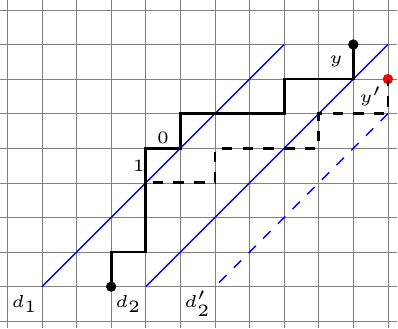}
		\caption{The $\phi$ map. In the figure, $d_1 = -2$ and $d_2=1$.
		The path $L$ is shown as a solid line and $\phi(L)$ is dashed.
		We have the new endpoint $y' = y - \nwvec$ and the shifted diagonal $d'_2=3$.
		Note that $L$ touches $d_2$ in the same manner as $\phi(L)$ touches $d'_2$.
		}\label{fig:phiMap}
	\end{figure}

	It is evident that $\phi$ is invertible. The inverse is given by
	replacing the east step $0$ following the last deepest point (that is, maximizing $y-x$)
  after touching the diagonal $d_1+1$ and before the diagonal $d_2 + 2$ with a north step $1$.

\textbf{Case $d_1 > 0$:}
	In this case, construct a bijection $\psi$ by replacing the east-step $0$ preceding
  $(r, s)$ with a north-step $1$.

	Note that this does not affect the major index and that $\psi(L)$ ends at $\yvec+\nwvec$.
	Furthermore, $\psi(L)$ intersects all diagonals $d_1-1$ as well as $d_2-2$, $d_3-2$, and so on.
	As before, it is straightforward to show that $\psi$ has an inverse.
	This proves the second case of \eqref{eq:diagonalHitRecursion}.
\end{proof}

\begin{corollary}\label{cor:diagonalHitRecursion2}
If $(\yvec; d_1,\dotsc,d_\ell)$ is alternating, then
$H(\yvec; d_1,\dotsc,d_\ell)$ is equal to
\begin{align}\label{eq:diagonalHitRecursionRepeated}
H(\yvec + d_1 \nwvec ; d_2-2d_1,d_3-2d_1,\dotsc,d_\ell-2d_1) \times
\begin{cases}
q^{-d_1} \text{ if $d_1<0$ }\\
 1 \text{ otherwise}.
\end{cases}
\end{align}
\end{corollary}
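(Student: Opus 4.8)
The plan is to derive the corollary by \emph{iterating} \cref{lem:diagonalHitMajorIndexCount}: each application of the lemma moves the leading diagonal $d_1$ one step closer to $0$, and once $d_1$ reaches $0$ the convention $H(\yvec; 0, d_2, \dotsc, d_\ell) = H(\yvec; d_2, \dotsc, d_\ell)$ recorded just before the lemma lets me delete it. I would phrase this as an induction on $|d_1|$.

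The base case $|d_1| = 0$ is immediate: when $d_1 = 0$ the claimed multiplier equals $1$, the shifted endpoint $\yvec + d_1 \nwvec$ is just $\yvec$, every shifted diagonal $d_i - 2d_1$ is just $d_i$, and the leading $0$ disappears by the convention, so both sides reduce to $H(\yvec; d_2, \dotsc, d_\ell)$.

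For the inductive step I apply \cref{lem:diagonalHitMajorIndexCount} exactly once. If $d_1 > 0$, the lemma rewrites $H(\yvec; d_1, \dotsc, d_\ell)$ as $H(\yvec + \nwvec; d_1 - 1, d_2 - 2, \dotsc, d_\ell - 2)$ with no $q$-factor; if $d_1 < 0$, it rewrites it as $q \cdot H(\yvec - \nwvec; d_1 + 1, d_2 + 2, \dotsc, d_\ell + 2)$. In either case the new leading diagonal ($d_1 - 1$ or $d_1 + 1$) has strictly smaller absolute value, and the lemma guarantees the new configuration is again alternating, so the inductive hypothesis applies to it. Composing the shifts then yields the result: for $d_1 > 0$ the endpoint becomes $(\yvec + \nwvec) + (d_1 - 1)\nwvec = \yvec + d_1 \nwvec$ and each diagonal becomes $(d_i - 2) - 2(d_1 - 1) = d_i - 2d_1$, while for $d_1 < 0$ the endpoint becomes $(\yvec - \nwvec) + (d_1 + 1)\nwvec = \yvec + d_1 \nwvec$, each diagonal becomes $(d_i + 2) - 2(d_1 + 1) = d_i - 2d_1$, and the accumulated $q$-factors multiply as $q \cdot q^{-(d_1 + 1)} = q^{-d_1}$. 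This matches the right-hand side of \eqref{eq:diagonalHitRecursionRepeated} in both cases.

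The content here is entirely bookkeeping, so I do not expect a genuine obstacle; the points that need care are that a single application of the lemma strictly decreases $|d_1|$ (so the induction terminates) and preserves the alternating property (so the lemma's hypotheses continue to hold), both of which are supplied by \cref{lem:diagonalHitMajorIndexCount} itself. It is also worth noting that the iteration leaves the number $\ell$ of diagonals unchanged, so the hypothesis $\ell \geq 1$ holds at every intermediate step; the reduction from $\ell$ to $\ell - 1$ diagonals happens only at the very end, through the convention used in the base case.
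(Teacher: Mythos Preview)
Your proof is correct and is exactly what the paper does: the paper's proof consists of the single sentence ``Apply \cref{lem:diagonalHitMajorIndexCount} repeatedly,'' and your induction on $|d_1|$ is simply a careful spelling-out of that repeated application, including the arithmetic verification that the shifts compose correctly.
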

\begin{proof}
 Apply \cref{lem:diagonalHitMajorIndexCount} repeatedly.
\end{proof}

We shall now focus on generating functions of lattice paths that touch two
diagonals at least $\ell$ times in an alternating fashion.
Let $\langle a, b \rangle_\ell$ denote the alternating list $(a,b,a,b,\dotsc)$ of length $\ell$.

\begin{corollary}\label{cor:evenBouncesRecursion}
Suppose $\delta > \gamma >0$. Then for all $j=0,\dotsc,\lfloor \ell/2 \rfloor $, we have the identities
\begin{equation*}
 H(\yvec;\langle \gamma,\gamma - \delta \rangle_{\ell}) =
 q^{j^2\delta + j\gamma} H(\yvec - j\delta \nwvec ;\langle \gamma + 2j\delta, \gamma - \delta + 2j\delta \rangle_{\ell-2j})
\end{equation*}
and
\begin{equation*}
 H(\yvec;\langle \gamma - \delta, \gamma \rangle_{\ell}) =
 q^{j^2\delta - j\gamma} H(\yvec + j\delta \nwvec ;\langle \gamma-\delta - 2j\delta, \gamma - 2j\delta \rangle_{\ell-2j})
\end{equation*}
\end{corollary}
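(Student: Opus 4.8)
The plan is to prove both identities by induction on $j$, using \cref{cor:diagonalHitRecursion2} to strip two diagonals from the front of the list at each inductive step. The base case $j=0$ asserts that each identity equals itself, so nothing needs to be checked there. The point of grouping the removals in pairs is that removing a positive leading diagonal and then a negative one (or vice versa) returns a list of the same alternating shape, shifts the endpoint by a net $\mp\delta\nwvec$, and multiplies by a single controllable power of $q$.

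For the inductive step of the first identity I would start from the statement at level $j$, whose right-hand side is $q^{j^2\delta+j\gamma}$ times $H(\yvec-j\delta\nwvec;\langle \gamma+2j\delta,\gamma-\delta+2j\delta\rangle_{\ell-2j})$. Writing $\gamma_j\coloneqq\gamma+2j\delta$, the list reads $\langle \gamma_j,\gamma_j-\delta\rangle_{\ell-2j}$. Applying \cref{cor:diagonalHitRecursion2} to the leading diagonal $\gamma_j>0$ contributes no power of $q$, moves the endpoint to $\yvec-j\delta\nwvec+\gamma_j\nwvec$, and turns the list into $\langle -\gamma_j-\delta,-\gamma_j\rangle_{\ell-2j-1}$. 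Applying the corollary once more to the now-negative leading diagonal $-\gamma_j-\delta$ contributes the factor $q^{\gamma_j+\delta}$, brings the endpoint to $\yvec-(j+1)\delta\nwvec$, and restores a list of the same shape, namely $\langle \gamma_j+2\delta,\gamma_j+\delta\rangle_{\ell-2(j+1)}=\langle \gamma_{j+1},\gamma_{j+1}-\delta\rangle_{\ell-2(j+1)}$. Collecting prefactors gives $q^{j^2\delta+j\gamma}\cdot q^{\gamma_j+\delta}=q^{(j+1)^2\delta+(j+1)\gamma}$, which is exactly the claim at level $j+1$. The second identity is handled identically, except that now the leading diagonal $\gamma-\delta-2j\delta$ is negative, so it is stripped first and contributes $q^{(2j+1)\delta-\gamma}$, while the second diagonal is positive and contributes no power; the endpoint shifts by $+\delta\nwvec$ per double step and the exponent accumulates to $(j+1)^2\delta-(j+1)\gamma$.

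Two points require a remark rather than real work. First, each application of \cref{cor:diagonalHitRecursion2} presupposes that the current configuration is alternating; this is legitimate because the starting configurations $\langle \gamma,\gamma-\delta\rangle_\ell$ and $\langle \gamma-\delta,\gamma\rangle_\ell$ alternate in sign (using $\delta>\gamma>0$), and \cref{lem:diagonalHitMajorIndexCount} guarantees that the alternating property is preserved by each single-diagonal step. Second, the hypothesis $j\le\lfloor\ell/2\rfloor$ guarantees that at least two diagonals are available to remove when passing from $j$ to $j+1$, so the induction never runs past an empty list.

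The only genuinely delicate part is the bookkeeping: one must verify that the $q$-power picked up at the negative step, $\gamma_j+\delta=\gamma+(2j+1)\delta$ in the first case and $(2j+1)\delta-\gamma$ in the second, telescopes with $q^{j^2\delta\pm j\gamma}$ into $q^{(j+1)^2\delta\pm(j+1)\gamma}$, and that the two opposite endpoint shifts (here $\gamma_j\nwvec$ followed by $-(\gamma_j+\delta)\nwvec$, and the mirror pair in the second identity) combine to the net shift $\mp\delta\nwvec$. Both reduce to one-line algebraic identities, but they are where all the content of the corollary lives; everything else is a direct iteration of the two preceding results.
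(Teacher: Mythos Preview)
Your proposal is correct and follows essentially the same approach as the paper: induction on $j$, with each inductive step given by two successive applications of \cref{cor:diagonalHitRecursion2}, and the same telescoping identity $(j^2\delta\pm j\gamma)+((2j+1)\delta\pm\gamma)=(j+1)^2\delta\pm(j+1)\gamma$ for the accumulated $q$-power. If anything, you are slightly more careful than the paper in explicitly noting that the alternating hypothesis is preserved at each step and that $j\le\lfloor\ell/2\rfloor$ ensures enough diagonals remain.
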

\begin{proof}
The first identity is proved by applying the recursion in \eqref{eq:diagonalHitRecursionRepeated} two times,
and using induction over $j$.
The first application on the expression
\[
H(\yvec - j\delta \nwvec ;\langle \gamma + 2j\delta, \gamma - \delta + 2j\delta \rangle_{\ell-2j})
\]
gives
\[
H(\yvec -(j\delta+\gamma+2j\delta)\nwvec;\langle -\gamma - \delta - 2j\delta, -\gamma - 2j\delta  \rangle_{\ell-2j-1}).
\]
The second application of the recursion gives
\[
 = q^{\gamma+\delta + 2j\delta}
H(\yvec-(j\delta+\delta)\nwvec;\langle \gamma + 2\delta + 2j\delta, \gamma + \delta + 2j\delta  \rangle_{\ell-2j-2}).
\]
Finally, we observe that
\[
(j^2\delta + j\gamma) + (\gamma+\delta + 2j\delta) = (j+1)^2\delta + (j+1)\gamma,
\]
so the result now follows via induction over $j$.
The second identity is proved in a similar fashion.
\end{proof}

\begin{lemma}\label{lem:qHitCountMajorIndex}
Suppose $\delta > \gamma > 0$ and $\yvec=(n,n-1)$. We then have the identities
\begin{align}
H(\yvec; \langle \gamma, \gamma - \delta \rangle_{2\ell})  &= q^{\ell^2\delta + \ell \gamma} \qbinom{2n-1}{n-1+\delta \ell}_q, \qquad \ell\geq 0 \label{eq:evenLeftFirst}\\
H(\yvec; \langle \gamma, \gamma - \delta \rangle_{2\ell+1})&= q^{\ell^2\delta + \ell \gamma} \qbinom{2n-1}{n-1+\gamma+\delta \ell}_q, \qquad \ell\geq 0, \label{eq:oddLeftFirst} \\
H(\yvec; \langle \gamma-\delta, \gamma \rangle_{2\ell})  &= q^{\ell^2 \delta - \ell \gamma}  \qbinom{2n-1}{n-1-\delta \ell}_q, \qquad \ell\geq 0, \\
H(\yvec; \langle \gamma-\delta, \gamma \rangle_{2\ell-1})&= q^{\ell^2 \delta - \ell \gamma}  \qbinom{2n-1}{n-1+\gamma-\delta \ell}_q, \qquad \ell\geq 1.
\end{align}
\end{lemma}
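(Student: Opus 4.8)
The plan is to prove all four identities in one stroke by reducing each left-hand side, via the recursions already in hand, to the major-index generating function $H(\yvec')$ of \emph{all} north-east paths to a single endpoint $\yvec'$; by MacMahon's \cref{lem:easyWordQAnalogue} this last quantity is a single Gaussian binomial. No new induction is needed, since \cref{cor:evenBouncesRecursion} already iterates the two-step recursion of \cref{lem:diagonalHitMajorIndexCount}. Thus the whole argument amounts to choosing how many reduction steps to perform and then evaluating the resulting base case.

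For the two even-length identities I remove the diagonal list completely by taking $j=\ell$ in, respectively, the first and second identity of \cref{cor:evenBouncesRecursion}. This leaves $q^{\ell^2\delta\pm\ell\gamma}\,H(\yvec\mp\ell\delta\nwvec)$ with an empty diagonal list, where $\yvec\mp\ell\delta\nwvec=(n\pm\ell\delta,\,n-1\mp\ell\delta)$. Since a north-east path to $(a,b)$ is a binary word with $b$ ones (its north steps), \cref{lem:easyWordQAnalogue} gives $H(\yvec\mp\ell\delta\nwvec)=\qbinom{2n-1}{\,n-1\mp\delta\ell\,}_q$, which (after the symmetry $\qbinom{N}{k}_q=\qbinom{N}{N-k}_q$ if one prefers the complementary form) is the content of the first and third identities. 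When the shifted endpoint falls outside the non-negative quadrant the path count is $0$, consistently with the vanishing of the out-of-range $q$-binomial, so no separate boundary discussion is required.

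For the two odd-length identities I instead stop one step earlier, taking $j=\ell$ for $\langle\gamma,\gamma-\delta\rangle_{2\ell+1}$ and $j=\ell-1$ for $\langle\gamma-\delta,\gamma\rangle_{2\ell-1}$, which leaves exactly one surviving diagonal $d$. I then clear that diagonal with a single application of \cref{cor:diagonalHitRecursion2}: the prefactor is $1$ when $d>0$ (the $\langle\gamma,\gamma-\delta\rangle$ case, $d=\gamma+2\delta\ell$) and $q^{-d}$ when $d<0$ (the $\langle\gamma-\delta,\gamma\rangle$ case, $d=\gamma-(2\ell-1)\delta$). Multiplying this prefactor with the power already produced by \cref{cor:evenBouncesRecursion} telescopes to $q^{\ell^2\delta\pm\ell\gamma}$, exactly as in the proof of that corollary, and the final endpoint $\yvec'+d\nwvec$ feeds into \cref{lem:easyWordQAnalogue} to produce $\qbinom{2n-1}{\,n-1+\gamma\pm\delta\ell\,}_q$.

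The coordinate arithmetic for the shifted endpoints and the telescoping of the $q$-powers are routine and parallel the computation in \cref{cor:evenBouncesRecursion}. The one delicate point, and the main obstacle, is reading off the \emph{correct} lower argument of each Gaussian binomial: both $H(\yvec')=\qbinom{2n-1}{b}_q$ (with $b$ the number of north steps) and its mirror $\qbinom{2n-1}{2n-1-b}_q$ are available, and the four shifted endpoints differ only in the signs attached to $\gamma$ and $\delta\ell$, so a sign slip is easy to commit. I would pin the signs down once by testing a single small instance before writing the general argument; for example, with $n=3$, $\gamma=1$, $\delta=2$ one checks directly that $H((3,2);1,-1)=q^3$ and $H((3,2);-1,1)=q(1+q+q^2+q^3+q^4)$, which determines the intended sign of $\delta\ell$ in each of the four cases.
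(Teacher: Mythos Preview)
Your approach is essentially identical to the paper's: for the two even-length cases you apply \cref{cor:evenBouncesRecursion} with $j=\ell$ to empty the diagonal list and then invoke \cref{lem:easyWordQAnalogue}, and for the two odd-length cases you stop with one diagonal remaining ($j=\ell$ for $\langle\gamma,\gamma-\delta\rangle_{2\ell+1}$, $j=\ell-1$ for $\langle\gamma-\delta,\gamma\rangle_{2\ell-1}$) and clear it with a single further use of \cref{cor:diagonalHitRecursion2}. This is exactly what the paper does, and your telescoping of the $q$-exponents and endpoint arithmetic agree with the paper's computations; your remark about possibly needing the symmetry $\qbinom{N}{k}_q=\qbinom{N}{N-k}_q$ to match the stated lower arguments is also apt.
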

\begin{proof}
Note that in all cases, we deal with north-east lattice paths of length $2n-1$ with exactly $n$ east-steps,
which we interpret as binary words of length $2n-1$ with exactly $n$ zeros.
From this observation, it is straightforward to see
that \cref{cor:evenBouncesRecursion} together with \cref{lem:easyWordQAnalogue}
implies the first and third identity.

To prove \eqref{eq:oddLeftFirst}, note that \cref{cor:evenBouncesRecursion} and \eqref{eq:diagonalHitRecursion} gives
\begin{align*}
H(\yvec; \langle \gamma, \gamma - \delta \rangle_{2\ell+1} )
         &= q^{\ell^2\delta + \ell \gamma} H(\yvec-(\ell\delta)\nwvec; \langle \gamma + 2\ell \delta \rangle_{1} ) \\
         &= q^{\ell^2\delta + \ell \gamma} H(\yvec+(\gamma+\ell\delta)\nwvec; \langle \cdot \rangle_{0} ) \\
         &= q^{\ell^2\delta + \ell \gamma} \qbinom{2n-1}{(n-1)-(\gamma+\ell\delta)}_q.
\end{align*}

Finally, the last identity follows from the fact that
\begin{align*}
&H(\yvec; \langle \gamma - \delta, \gamma \rangle_{2\ell -1} ) \\
&= q^{(\ell-1)^2\delta - \gamma(\ell-1)} H(\yvec+(-\delta+\ell\delta)\nwvec; \langle \gamma - \delta - 2(\ell-1)\delta \rangle_{1} ) \\
&= q^{(\ell-1)^2\delta - \gamma(\ell-1) -\gamma +\delta + 2(\ell-1)\delta}
H(\yvec+(-\delta+\ell\delta+\gamma -\delta - 2\ell \delta +2\delta)\nwvec; \langle \cdot \rangle_{0})  \\
&= q^{\ell^2\delta - \gamma \ell} H(\yvec+(\gamma-\ell \delta)\nwvec; \langle \cdot \rangle_{0} ) \\
&= q^{\ell^2\delta - \gamma \ell} \qbinom{2n-1}{n-1+\gamma-\ell\delta}_q.
\end{align*}
This finishes the proof of the identities.
\end{proof}

\begin{definition}\label{def:latticeEnumeration}
Fix an integer $w\geq 0$ and $j\in \{ 1,\dotsc,w \}$ and let the diagonals through $(0,0)$ and $(w+2,0)$
be referred to as the \emph{left} and the \emph{right} diagonal, respectively.

Let $L_j(n,w,\ell)$ be the $q$-enumeration of north-east paths $L$ from
\begin{align}\label{eq:startEndPts}
 (w+1-j,0) \text{ to } (n+w+1-j,n-1)
\end{align}
with the property that there are $\ell$ points on the path $L$, $p_1,p_2,\dotsc,p_\ell$, appearing in this order from the start,
such that the odd-indexed $p_i$ lie on the left diagonal, and the even-indexed $p_i$ are points on the right diagonal.
Similarly, define $R_j(n,w,\ell)$ be the be the $q$-enumeration of north-east paths in \eqref{eq:startEndPts},
such that there are $\ell$ points $p_1,p_2,\dotsc,p_\ell$ on the path
with the \emph{even}-indexed $p_i$ being on the left diagonal,
and the \emph{odd}-indexed $p_i$ being points on the right diagonal.
\end{definition}
Let $\delta \coloneqq w+2$ and $\yvec=(n,n-1)$.
From \cref{def:latticeEnumeration}, it is straightforward to see that the
generating functions $L_j(n,w,\ell)$ and $R_j(n,w,\ell)$ are equal
to generating functions in \cref{lem:qHitCountMajorIndex}.
Unraveling the definitions, we have that
\begin{align*}
 L_j(n,w,2\ell)   &= H(\yvec; \langle j+1-\delta, j+1 \rangle_{2\ell})   &= q^{\ell^2 \delta - \ell(j+1)} \qbinom{2n-1}{n-1-\delta\ell}_q \\
 L_j(n,w,2\ell+1) &= H(\yvec; \langle j+1-\delta, j+1 \rangle_{2\ell+1}) &= q^{\ell^2 \delta - \ell(j+1)} \qbinom{2n-1}{n+j-\delta\ell}_q \\
 R_j(n,w,2\ell)   &= H(\yvec; \langle j+1, j+1-\delta \rangle_{2\ell})   &= q^{\ell^2 \delta + \ell(j+1)} \qbinom{2n-1}{n-1+\delta\ell}_q \\
 R_j(n,w,2\ell-1) &= H(\yvec; \langle j+1, j+1-\delta \rangle_{2\ell-1}) &= q^{\ell^2 \delta + \ell(j+1)} \qbinom{2n-1}{n+j+\delta\ell}_q\!\!.
\end{align*}

\begin{corollary}\label{prop:circularQCount}
We have the $q$-enumeration
\begin{align}\label{eq:circularQCountGeneral}
|\CDP(n,w)|_q =
\sum_{s \in \setZ}
\sum_{j=1}^w
q^{s^2\delta + s(j+1)}
\left(
\qbinom{2n-1}{n-1-\delta s}_q
-
\qbinom{2n-1}{n+j+\delta s}_q
\right),
\end{align}
where $\delta = w+2$. In particular, when $w \geq n$, we have
\begin{align}\label{eq:circularQCount2}
 |\CDP(n,w)|_q = w \qbinom{2n-1}{n-1}_q
- \sum_{j=1}^{w} q^{j} \qbinom{2n-1}{n+j}_q
- \sum_{j=1}^{w} \qbinom{2n-1}{n+j-(w+2)}_q.
\end{align}
\end{corollary}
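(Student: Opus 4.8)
The plan is to run the inclusion–exclusion that proves \cref{P:bijective_q_analog}, but iterated across \emph{both} bounding diagonals rather than peeled off once from each. Fix a starting point $(x_0,0)$ with $x_0=w+1-j$, $j\in\{1,\dots,w\}$, and write $\delta=w+2$. The circular Dyck paths beginning there are the north-east lattice paths of \cref{def:latticeEnumeration} that stay strictly between the left diagonal through the origin and the right diagonal through $(w+2,0)$. I would first record, exactly as in \cref{P:bijective_q_analog}, that the $\maj$-generating function of \emph{all} such paths with no diagonal constraint is $\qbinom{2n-1}{n-1}_q$, and then sieve out the paths meeting a diagonal. The single-diagonal case is precisely \cref{L:gen_q_ballot}; here the new ingredient is that a path may bounce back and forth between the two walls, so a one-shot reflection no longer suffices.

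\textbf{The two-wall reflection.} A path touching the diagonals determines a maximal alternating sequence of touch-points, and $L_j(n,w,\ell)$, $R_j(n,w,\ell)$ from \cref{def:latticeEnumeration} are exactly the $\maj$-generating functions of paths admitting $\ell$ such touches, sorted by whether the first touch is on the left or the right diagonal. The reflection map of \cref{lem:diagonalHitMajorIndexCount} shows each touch contributes a controlled power of $q$, so the sieve over the infinite dihedral group generated by the two reflections gives the generating function of paths avoiding both diagonals as
\[
\sum_{\ell\ge 0}(-1)^\ell L_j(n,w,\ell) \;+\; \sum_{\ell\ge 1}(-1)^\ell R_j(n,w,\ell),
\]
the $\ell=0$ term being the unconstrained count $\qbinom{2n-1}{n-1}_q$, counted once. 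I would then substitute the closed forms computed right after \cref{def:latticeEnumeration}, split each sum into even- and odd-indexed parts, and merge the four families: the even parts of $L_j$ and $R_j$ assemble into a single sum over $s\in\setZ$ (one family supplying $s\ge 0$, the other $s<0$) of terms $q^{s^2\delta+s(j+1)}\qbinom{2n-1}{n-1-\delta s}_q$, and the odd parts assemble into $-q^{s^2\delta+s(j+1)}\qbinom{2n-1}{n+j+\delta s}_q$. This is exactly the summand of \eqref{eq:circularQCountGeneral}, and summing over $j=1,\dots,w$ yields the first formula.

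\textbf{The case $w\ge n$.} Here I would argue by support. Since $\delta=w+2\ge n+2$, the index $n-1-\delta s$ lies in $[0,2n-1]$ only for $s=0$, so the first $q$-binomial survives only at $s=0$ and contributes $w\qbinom{2n-1}{n-1}_q$ after summing over $j$. For the second $q$-binomial the index $n+j+\delta s$ is in range only for $s\in\{0,-1\}$, producing the $\qbinom{2n-1}{n+j}_q$ terms and the $\qbinom{2n-1}{n+j-(w+2)}_q$ terms; a short reindexing of $j$ together with the symmetry $\qbinom{2n-1}{k}_q=\qbinom{2n-1}{2n-1-k}_q$ transfers the power of $q$ onto the middle sum and clears it from the last, matching \eqref{eq:circularQCount2} (and, at $w=n$, reproducing \cref{P:bijective_q_analog} term by term).

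\textbf{Main obstacle.} The delicate step is the reflection bookkeeping. One must verify that the alternating sum telescopes to count each diagonal-avoiding path exactly once, so that paths hitting the two walls in complicated patterns are neither double-counted nor missed, and that the $q$-weight accrued at each bounce is exactly the $q^{s^2\delta+s(j+1)}$ predicted by \cref{lem:qHitCountMajorIndex}. Keeping the signs and the $s\leftrightarrow -s$ reindexing consistent between the left-first and right-first families — so that the two halves fuse into one clean sum over $\setZ$ — is where the real care lies; the exactness of the sieve itself follows because the two reflections generate a dihedral group whose action makes the Bonferroni cancellation precise.
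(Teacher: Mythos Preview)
Your proposal is correct and follows essentially the same route as the paper: both set up the two-wall inclusion–exclusion via the families $L_j(n,w,\ell)$ and $R_j(n,w,\ell)$ of \cref{def:latticeEnumeration}, plug in the closed forms coming from \cref{lem:qHitCountMajorIndex}, and read off \eqref{eq:circularQCountGeneral}; for $w\ge n$ the paper argues that a path cannot reach both walls (so only $\ell\le 1$ survives), which is exactly your support argument that only $s\in\{0,-1\}$ contribute. Your reindexing-plus-symmetry step to move the factor $q^j$ between the two surviving sums is the one detail the paper leaves implicit, and it is correct.
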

\begin{proof}
We have that $\CDP(n,w)$ are certain north-east lattice paths avoiding the two diagonals through $(0,0)$ and $(\delta,0)$.
To find the maj-count of these paths, we use an inclusion-exclusion argument.
Not taking the restrictions imposed by the diagonals into account, the maj-count is given by
\[
 \sum_{j=1}^w \qbinom{2n-1}{n-1}_q = w \qbinom{2n-1}{n-1}_q.
\]
The paths counted by $L_j(n,w,1)$ and $R_j(n,w,1)$ for $j\in [w]$ enumerate all forbidden paths.
However, we cannot simply subtract both these as there are paths counted by both these expressions,
namely $L_j(n,w,2)$ and $R_j(n,w,2)$, and so on.
Combining \cref{def:latticeEnumeration} and the enumeration in \cref{lem:qHitCountMajorIndex}
then gives the expression in \cref{eq:circularQCountGeneral}.

Note that in particular, $L_j(n,w,2) = R_j(n,w,2) = 0$ whenever $w\geq 0$
(since a path cannot hit both forbidden diagonals in this case) so
we get the less complicated expression in \eqref{eq:circularQCount2}.
Letting $w=n$ in \eqref{eq:circularQCount2} gives \eqref{eq:circularQCount}.
\end{proof}

\begin{lemma}\label{lem:rewittenFormulaForCounting}
We have the identity
\begin{align}\label{lem:enumerationRewritten}
|\CDP(n,w)| &=
(w+2)
\sum_{t \in \setZ}
\binom{2n-1}{n+(w+2)t}
-
\sum_{t \in \setZ}
\binom{2n-1}{n+t} \\
&= (w+2)
\sum_{t \in \setZ}
\binom{2n-1}{n+(w+2)t}
-
2^{2n-1}.
\end{align}
\end{lemma}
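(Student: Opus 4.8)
The plan is to set $q=1$ in the $q$-enumeration formula \eqref{eq:circularQCountGeneral} from \cref{prop:circularQCount}, since $|\CDP(n,w)| = |\CDP(n,w)|_q\big|_{q=1}$, and then simplify the resulting sum of ordinary binomial coefficients. Setting $q=1$ kills all the powers of $q$ and turns each $q$-binomial into an ordinary binomial, so the right-hand side becomes
\[
\sum_{s\in\setZ}\sum_{j=1}^{w}
\left(\binom{2n-1}{n-1-(w+2)s}-\binom{2n-1}{n+j+(w+2)s}\right).
\]
The bulk of the work is to reorganize this double sum into the two clean sums claimed in the statement.

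First I would handle the left-hand binomials. Since these do not depend on $j$, the inner sum over $j$ just contributes a factor of $w$, giving $w\sum_{s}\binom{2n-1}{n-1-(w+2)s}$. Reindexing with $t=-s$ turns this into $w\sum_{t}\binom{2n-1}{n-1+(w+2)t}$, and shifting the summation representative (using that we sum over all of $\setZ$) lets me rewrite the index to match $\binom{2n-1}{n+(w+2)t}$. The key accounting point here is that $\sum_t \binom{2n-1}{n-1+(w+2)t}$ and $\sum_t \binom{2n-1}{n+(w+2)t}$ are the same sum after a shift of the index $t$, so I can freely align the form with the target. The goal is to combine the $w$ from the left terms with a $+2$ coming from the right terms to produce the factor $(w+2)$ in front.

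Next I would attack the right-hand binomials $\sum_{s}\sum_{j=1}^{w}\binom{2n-1}{n+j+(w+2)s}$. For fixed $s$, as $j$ runs over $1,\dots,w$ the top index $n+j+(w+2)s$ runs over a block of $w$ consecutive values, and as $s$ ranges over $\setZ$ these blocks tile $\setZ$ except that they miss exactly two residues modulo $w+2$ in each period (the residues corresponding to $j=0$ and $j=w+1$, i.e.\ the left-diagonal positions). Thus $\sum_{s}\sum_{j=1}^w\binom{2n-1}{n+j+(w+2)s}$ equals $\sum_{t}\binom{2n-1}{n+t}$ minus the two arithmetic-progression sums over the omitted residues, each of which is of the form $\sum_{s}\binom{2n-1}{n+(w+2)s+c}$. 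The main obstacle — and the step demanding the most care — will be bookkeeping these two missing residues and checking that, after combining with the left-hand contribution, the two ``missing'' progression-sums conspire with $w\sum_t\binom{2n-1}{n-1+(w+2)t}$ to give exactly $(w+2)\sum_t\binom{2n-1}{n+(w+2)t}$ with the full $\sum_t\binom{2n-1}{n+t}$ surviving as the subtracted term. Here I expect to use the symmetry $\binom{2n-1}{n+t}=\binom{2n-1}{n-1-t}$ to identify the two omitted residues with shifts of the principal progression, so that three copies of a $(w+2)$-progression (the $w$ from the left plus the two recovered residues) merge.

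Finally, the passage from the first line of \eqref{lem:enumerationRewritten} to the second is immediate: $\sum_{t\in\setZ}\binom{2n-1}{n+t}$ is simply the sum of all binomial coefficients $\binom{2n-1}{m}$ over every integer top index $m$, which equals $\sum_{m=0}^{2n-1}\binom{2n-1}{m}=2^{2n-1}$. The whole argument is an exercise in index manipulation with no deep input, so I would present it as a direct specialization-and-reindex computation rather than anything requiring a new idea; the only genuine pitfall is miscounting the two excluded residue classes modulo $w+2$, which is why I would track them explicitly via the $j=0$ and $j=w+1$ boundary cases throughout.
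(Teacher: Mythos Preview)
Your approach is correct and essentially identical to the paper's: both set $q=1$ in \eqref{eq:circularQCountGeneral} and then add and subtract the boundary cases $j=0$ and $j=w+1$ to complete the inner sum to a full residue system modulo $w+2$. One small correction: the claim that $\sum_t \binom{2n-1}{n-1+(w+2)t}$ and $\sum_t \binom{2n-1}{n+(w+2)t}$ agree ``after a shift of the index $t$'' is not right as stated, since these run over distinct residue classes mod $w+2$; the equality holds instead via the symmetry $\binom{2n-1}{k}=\binom{2n-1}{2n-1-k}$ followed by $t\mapsto -t$, which you do invoke later.
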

\begin{proof}
This is straightforward consequence of \eqref{eq:circularQCountGeneral}, by letting $q=1$
and then adding and subtracting the case $j=0$ and $j=w+1$ to the inner sum.
\end{proof}

We note that the $q=1$ case of \eqref{eq:circularQCountGeneral}
follows easily from \cite[Thm. 2]{Mohanty1979},
where the proof is also done via a reflection argument together with inclusion-exclusion.
However, his approach is not compatible with our use of major index.

\section{The cyclic sieving phenomenon under shifting}

This section contains the proof of our main result, \cref{thm:mainCSP} stated below.

\begin{theorem}\label{thm:mainCSP}
Let $\alpha$ act on $\CDP(n,w)$ by cyclically shifting the area sequence one step.
Then the triple
\[
 \left( \CDP(n,w), \langle \alpha \rangle, |\CDP(n,w)|_q \right)
\]
is a CSP-triple.
\end{theorem}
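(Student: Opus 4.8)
The plan is to apply the evaluation criterion for the CSP via $q$-Lucas (\cref{lem:qLucas}), together with the explicit formula \eqref{eq:circularQCountGeneral} for $|\CDP(n,w)|_q$. Fix a divisor $d$ of $n$ and set $q = \omega_n^{n/d}$, a primitive $d$:th root of unity. I must show that the number of area sequences $\avec \in \CDP(n,w)$ fixed by $\alpha^{n/d}$, i.e.\ those invariant under cyclically shifting by $n/d$ positions, equals the evaluation of \eqref{eq:circularQCountGeneral} at this root of unity. First I would analyze the left-hand side combinatorially: an area sequence fixed under a shift by $n/d$ is periodic with period $n/d$, so it is determined by its first $n/d$ entries, subject to the circular Dyck path conditions wrapping around consistently. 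The key observation is that such periodic fixed sequences should be in bijection with a smaller circular Dyck path object — plausibly $\CDP(n/d, w)$ or a closely related set — giving a clean count for the number of fixed points.

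For the right-hand side I would evaluate each $q$-binomial coefficient in \eqref{eq:circularQCountGeneral} at a primitive $d$:th root of unity using the specialized $q$-Lucas statement
\[
\qbinom{2n-1}{n-1-\delta s}_q = \binom{\lfloor (2n-1)/d \rfloor}{\lfloor (n-1-\delta s)/d \rfloor}\qbinom{\{2n-1\}_d}{\{n-1-\delta s\}_d}_q,
\]
and similarly for the other term, while the prefactors $q^{s^2\delta + s(j+1)}$ become explicit roots of unity. The aim is to collapse the double sum over $s \in \setZ$ and $j \in [w]$ into a formula that, after reindexing, matches the formula \eqref{eq:circularQCountGeneral} (or its $q=1$ specialization) for the smaller parameter $n/d$. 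This is where the Lyndon-like structure flagged in the introduction enters: the identity $f_{n/m}(1) = f_n(\exp(2\pi i/m))$ for $m \mid n$ is exactly the statement that evaluating the big polynomial at a root of unity reproduces the counting polynomial at $q=1$ for the reduced size, which by \cref{lem:rewittenFormulaForCounting} counts $|\CDP(n/d,w)|$ at $q=1$.

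The main obstacle will be the bookkeeping in the $q$-Lucas reduction: the floor and remainder functions $\lfloor \cdot \rfloor$ and $\{\cdot\}_d$ applied to the indices $n-1-\delta s$, $n+j+\delta s$, and $2n-1$ interact with the summation over $s$ in a way that requires splitting $s$ according to residues and carefully tracking which terms survive. In particular, the residual $q$-binomials $\qbinom{\{2n-1\}_d}{\{\cdot\}_d}_q$ are themselves genuine polynomials in the root of unity $q$, not integers, so I must verify they combine to cancel or telescope — the surviving contributions should come precisely from those $s,j$ for which the residues align to produce $\qbinom{\{2n-1\}_d}{\{\cdot\}_d}_q = 1$ or reorganize into a recognizable smaller instance. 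I would organize this by treating the cases $d \mid (w+2)$ and $d \nmid (w+2)$ separately, since the prefactor $q^{s^2\delta}$ simplifies differently depending on whether $\delta = w+2$ is divisible by $d$, and I expect the periodic-fixed-point count on the left to be sensitive to the same divisibility condition, so that the two sides match case by case.
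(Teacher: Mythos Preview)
Your high-level plan—identify fixed points of $\alpha^{n/d}$ with $\CDP(n/d,w)$, then evaluate \eqref{eq:circularQCountGeneral} at a primitive $d$:th root of unity via $q$-Lucas and match the result against \cref{lem:rewittenFormulaForCounting}—is exactly the paper's strategy, and the fixed-point bijection you describe is \cref{lem:fixedPoints}.

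Where the proposal has a genuine gap is in the mechanism by which the $q$-Lucas reduction collapses. You expect the surviving contributions to be those $(s,j)$ for which the residual factor $\qbinom{\{2n-1\}_d}{\{\cdot\}_d}_q$ equals $1$, and you propose organizing the computation by whether $d \mid (w+2)$. Neither works: since $d\mid n$ one has $\{2n-1\}_d=d-1$, and $\qbinom{d-1}{r}_q$ is \emph{nonzero} at a primitive $d$:th root for every $r\in\{0,\dots,d-1\}$, so no term drops out on its own; and the dichotomy $d\mid\delta$ versus $d\nmid\delta$ is not the relevant organizing principle. What actually happens (the paper's \cref{lem:qCases}) is a sign-pairing argument on the index set. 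Writing the summand after $q$-Lucas as $A(s,j)+B(s,j)$, one verifies $A(s,j)=-A(-s,w-j)$ and $B(s,j)=-B(-s-1,w-j)$ for generic residues of $\delta s$ and $j+1+\delta s$ modulo $d$, and $A(s,w)=-B(s-1,w)$ for generic $\delta s$; thus most terms cancel in pairs under the involutions $(s,j)\leftrightarrow(-s,w-j)$ and $(s,j)\leftrightarrow(-s-1,w-j)$. The finitely many residue classes excluded from these pairings (e.g.\ $\delta s\equiv 0$, $\delta s\equiv 1$, $s\equiv 0$, $j+1+\delta s\equiv 0$ modulo $d$) contribute explicit ordinary binomials, and it is only \emph{these} leftovers that assemble into the $q=1$ formula of \cref{lem:rewittenFormulaForCounting} for the reduced size. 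This involution on the summation indices is the key idea your outline is missing; without it the bookkeeping does not close, regardless of how you split on $\gcd(d,\delta)$.

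A smaller point: for the CSP you must evaluate at every $\omega_n^k$, $k\in[n]$, not only at $\omega_n^{n/d}$ for divisors $d$ of $n$. The paper treats all primitive $m$:th roots $\exp(2\pi i\ell/m)$ with $\gcd(\ell,m)=1$ simultaneously; your parametrization covers only one primitive root of each order.
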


The proof consists of first counting the number of fixed points under cyclic shift by $k$ steps, which is done in
\cref{lem:fixedPoints}. Then, in \cref{prop:evaluation}, we show that the $q$-analogue $|\CDP(n,w)|_q$ evaluates
to it at $q = e^{2\pi i k/n}$.

Let $\CDP_k(n,w)$ be the subset of area sequences in $\CDP(n,w)$ that is fixed by a cyclic shift of $k$ steps.

\begin{lemma}\label{lem:fixedPoints}
For $n\geq k\geq 1$, let $d \coloneqq \gcd(n,k)$, then
\begin{equation}\label{eq:formulaFixedPointCount}
|\CDP_k(n,w)| = |\CDP(d,w)|.
\end{equation}
\end{lemma}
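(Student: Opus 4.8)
The plan is to exhibit an explicit bijection between $\CDP_k(n,w)$, the area sequences fixed by a cyclic shift of $k$ steps, and $\CDP(d,w)$, where $d = \gcd(n,k)$. The key observation is that an area sequence $\avec = (a_1, \dotsc, a_n)$ is fixed by the shift $\alpha^k$ precisely when it is periodic with period $d$: since $\alpha$ shifts by one step, $\alpha^k$ sends $a_i \mapsto a_{i+k}$ (indices mod $n$), and the orbit of any index $i$ under adding $k$ modulo $n$ is exactly the coset $i + d\setZ \pmod n$. Hence $\alpha^k \cdot \avec = \avec$ forces $a_i = a_j$ whenever $i \equiv j \pmod d$, so $\avec$ is determined by its first $d$ entries $(a_1, \dotsc, a_d)$, repeated $n/d$ times.

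First I would set up the restriction map $\rho \colon \CDP_k(n,w) \to \setN^d$ sending a fixed $\avec$ to $(a_1, \dotsc, a_d)$, and argue it is injective by the periodicity just described. The substance of the proof is to check that $\rho$ lands in $\CDP(d,w)$ and is surjective onto it, i.e. that the defining constraints of $\CDP(n,w)$ restricted to a $d$-periodic sequence are \emph{equivalent} to the defining constraints of $\CDP(d,w)$ on the truncation. The bound constraint $0 \le a_i \le w-1$ is clearly preserved in both directions since it is entry-wise. For the slope constraint $a_{i+1} \le a_i + 1$ (indices mod $n$), I would note that for $1 \le i \le d-1$ this is literally the constraint $a_{i+1} \le a_i + 1$ of $\CDP(d,w)$, while the single cyclic constraint of $\CDP(d,w)$, namely $a_1 \le a_d + 1$ (using the mod-$d$ wraparound), must be shown to coincide with the relevant wraparound constraint of the big sequence. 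Here the periodicity is what makes it work: in the $n$-periodic extension, the constraint at the seam $a_{d+1} \le a_d + 1$ reads $a_1 \le a_d + 1$ because $a_{d+1} = a_1$, and all the remaining slope constraints of the length-$n$ sequence are just repetitions of these $d$ constraints. Thus the full set of $n$ constraints on the periodic sequence collapses to exactly the $d$ constraints defining $\CDP(d,w)$.

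Conversely, given any $\bvec = (b_1, \dotsc, b_d) \in \CDP(d,w)$, I would define $\avec$ by $a_i \coloneqq b_{\{i-1\}_d + 1}$ (the $d$-periodic extension) and verify $\avec \in \CDP(n,w)$: the bound constraints are immediate, and each slope constraint $a_{i+1} \le a_i + 1$ of the length-$n$ sequence reduces, under the periodicity, to one of the $d$ slope constraints of $\bvec$, all of which hold by assumption. This shows $\rho$ is a bijection, giving $|\CDP_k(n,w)| = |\CDP(d,w)|$ as claimed.

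The main obstacle I anticipate is purely bookkeeping with the cyclic indices: one must be careful that the wraparound constraint of the smaller set $\CDP(d,w)$ (relating $a_1$ and $a_d$) is genuinely enforced by, and not stronger than, the collection of wraparound and interior constraints of $\CDP(n,w)$ on the periodic sequence. The clean way to see this is to observe that the multiset of constraints $\{a_{i+1} \le a_i + 1 : i \in \setZ/n\}$ on a $d$-periodic sequence is, as a set of inequalities, exactly $n/d$ copies of the constraints $\{a_{i+1} \le a_i + 1 : i \in \setZ/d\}$, so no new information is added by working modulo $n$ rather than modulo $d$. Once this equivalence of constraint systems is pinned down, the bijection and hence the cardinality identity follow immediately.
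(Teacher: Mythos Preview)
Your proposal is correct and follows the natural approach; the paper's own proof consists only of the sentence ``This is easy to prove,'' so you have simply filled in the details that the authors left implicit. There is nothing to compare beyond that.
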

\begin{proof}
This is easy to prove.
\end{proof}

Let $\delta \coloneqq w+2$. We have that $|\CDP(n,w)|_q$ is equal to
\[
\sum_{s \in \setZ}
\sum_{j=1}^w
q^{s^2\delta + s(j+1)}
\left(
\qbinom{2n-1}{n+\delta s}_q
-
\qbinom{2n-1}{n+j+\delta s}_q
\right).
\]
We need to evaluate this at powers of $\exp(2\pi i/n)$.
All such powers are of the form $\exp(2\pi i \ell/m)$, where $m|n$ and $\gcd(\ell,m)=1$.
The goal is to show that if $n=md$, then
\[
 |\CDP(n,w)|_{q=\exp(2\pi i \ell/m)} = |\CDP(d,w)|.
\]
This identity is trivial whenever $m=1$, so we assume that $m\geq 2$.
Let $\modpart{m}{d}$ denote the (non-negative) remainder of $d$ when divided by $m$.
The $q$-Lucas theorem implies that whenever $q = \exp(2\pi i \ell/m)$ for $n=md$, $\gcd(\ell,m)=1$,
we have that $|\CDP(n,w)|_q$ is equal to
\begin{equation}\label{eq:qLucasVersion}
\sum_{s \in \setZ}
\sum_{j=1}^w
q^{s^2\delta + s(j+1)}
\left(
\binom{2d-1}{d+\lfloor \frac{\delta s}{ m } \rfloor}
\qbinom{m-1}{ \modpart{m}{\delta s}}_q
-
\binom{2d-1}{d+\lfloor \frac{\delta s + j}{ d } \rfloor}
\qbinom{m-1}{ \modpart{m}{j+\delta s}}_q
\right).
\end{equation}

Introduce
\begin{align}
A(s,j) &=\phantom{-} q^{s^2\delta + s(j+1)}
\binom{2d-1}{d+\lfloor \frac{\delta s }{m} \rfloor}
\qbinom{m-1}{ \modpart{m}{\delta s } }_q \\
B(s,j) &= -q^{s^2\delta + s(j+1)}
\binom{2d-1}{d+\lfloor \frac{\delta s + j }{m} \rfloor}
\qbinom{m-1}{ \modpart{m}{ j+\delta s}}_q
\end{align}
which also implicitly depend on $\delta$ and $m$.

The following lemma is needed for the proof of \cref{prop:evaluation}.

\begin{lemma}\label{lem:qCases}
We have the following identities:
\begin{enumerate}[label=(\Roman*)]
\item \label{c:AA} $A(s,j) = -A(-s,w-j),$ for $j \neq w, \delta s \not \equiv_m 0.$

\item \label{c:BB} $B(s,j) = -B(-s-1,w-j),$ for $j \neq w, j + 1 + \delta s \not \equiv_m 0.$

\item \label{c:AB} $A(s,w) = -B(s-1,w)$ for $\delta s \not\equiv_m 0, 1$.

\item \label{c:Sum}
$B(s-1,w) + \sum_{j=1}^w A(-s,j) = -\binom{2d-1}{d +  \frac{\delta s}{m}-1 }$
whenever $\delta s \equiv_m 0$ and $s \not\equiv_m 0$.

\item \label{c:Aj}
$A(s,j) = \binom{2d-1}{d +  \frac{\delta s }{m} }$ whenever $s \equiv_m 0$.

\item \label{c:Aw}
$A(s,w) = -\binom{2d-1}{d +  \frac{\delta s-1}{m}  } $ whenever $\delta s \equiv_m 1$.

\item \label{c:Bj}
$B(s,j) = - \binom{2d-1}{d + \frac{\delta s + j +1}{m} - 1 } $ whenever $j + 1 + \delta s \equiv_m 0$ and $j<w$.

\item \label{c:Bw0}
$B(s-1,w) = \binom{2d-1}{d + \frac{\delta s}{m} -1 } $ whenever $s \equiv_m 0$.

\item \label{c:Bw1}
$B(s-1,w) = -\binom{2d-1}{d + \frac{\delta s -1}{m} - 1}$ whenever $\delta s \equiv_m 1$.
\end{enumerate}

Furthermore, over all combinations of $s\in \setZ$ and $j=1,2,\dotsc,w$,
the above cases covers each term in \cref{eq:qLucasVersion} exactly once.
\end{lemma}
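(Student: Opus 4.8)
The plan is to observe that all nine identities share a common skeleton, so I would first isolate the two algebraic facts that do essentially all the work. The first is a closed form for the surviving $q$-binomials at a root of unity: if $q$ is a primitive $m$:th root of unity, then $[m-i]_q = -q^{-i}[i]_q$, whence
\[
\qbinom{m-1}{k}_q = \prod_{i=1}^{k}\frac{[m-i]_q}{[i]_q} = (-1)^k q^{-\binom{k+1}{2}}, \qquad 0\le k\le m-1.
\]
In particular $\qbinom{m-1}{0}_q = 1$ and $\qbinom{m-1}{1}_q = -q^{-1}$. This collapses every $q$-binomial factor appearing in $A(s,j)$ and $B(s,j)$ into a sign times a power of $q$. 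The second ingredient is elementary modular arithmetic: for $m\nmid a$ one has $\modpart{m}{-a} = m - \modpart{m}{a}$ and $\lfloor -a/m\rfloor = -\lfloor a/m\rfloor - 1$, with the obvious exact versions when $m\mid a$. Together with the symmetry $\binom{2d-1}{k} = \binom{2d-1}{2d-1-k}$, these let me match the ordinary binomial factors on the two sides of each identity, reducing everything to a comparison of a single power of $q$ and a single sign.

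With these in hand, the pairing identities (I), (II), (III) become routine. They assert cancellation under the involutions $(s,j)\mapsto(-s,w-j)$ for the $A$-terms, $(s,j)\mapsto(-s-1,w-j)$ for the $B$-terms, and the cross-pairing $A(s,w)\leftrightarrow B(s-1,w)$. For (I), for instance, I would expand both sides: the ordinary binomials agree by the symmetry and the floor identity above, and the leftover factor is $q^{e}$ times a sign, where the exponent $e$ reduces modulo $m$ (using $q^{\delta s} = q^{\modpart{m}{\delta s}}$) to $0$ when $m$ is odd and to $m/2$ when $m$ is even. Since $q^{m/2} = -1$ for even $m$ while the residual sign is $(-1)^m$, the product is $-1$ in both parities, giving $A(s,j) = -A(-s,w-j)$. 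Identities (II) and (III) are the same computation with shifted indices.

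The evaluation identities (V)--(IX) are direct: when the relevant remainder $\modpart{m}{\delta s}$ (or $\modpart{m}{j+\delta s}$) vanishes, the $q$-binomial is $1$, the $q$-power trivializes, and the floor becomes exact, yielding the stated ordinary binomial (cases V, VII, VIII); when the remainder equals $1$ one instead uses $\qbinom{m-1}{1}_q = -q^{-1}$ to produce (VI) and (IX). The one genuinely new computation is the sum (IV): here $\delta s\equiv_m 0$ with $s\not\equiv_m 0$, so each $A(-s,j)$ has trivial $q$-binomial and $\sum_{j=1}^{w}A(-s,j) = \binom{2d-1}{d-1+\delta s/m}\sum_{j=1}^{w}q^{-s(j+1)}$. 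Using $w = \delta-2$ and $q^{s\delta}=1$, the geometric sum collapses to $-(1+q^{-s})$, while $B(s-1,w)$ reduces to $q^{-s}\binom{2d-1}{d-1+\delta s/m}$; the two combine to $-\binom{2d-1}{d+\delta s/m - 1}$, as claimed.

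Finally I would prove the covering statement by sorting every term according to the residue of $\delta s$ (respectively $j+1+\delta s$) modulo $m$ and to whether $j=w$. For the $A$-terms: those with $j\neq w$ and $\delta s\not\equiv_m 0$ go to (I), those with $s\equiv_m 0$ to (V), and those with $\delta s\equiv_m 0,\ s\not\equiv_m 0$ into the sum (IV); the slice $j=w$ splits further by $\delta s\equiv_m 1$ (VI) versus $\delta s\not\equiv_m 0,1$ (III). The $B$-terms are handled symmetrically through (II), (VII), (VIII), (IX), (IV) and (III). I would check that the defining congruence conditions are pairwise disjoint and jointly exhaustive, and that the two involutions are fixed-point-free on their respective domains (e.g. $(s,j)=(-s-1,w-j)$ forces $2s=-1$, impossible). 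I expect this bookkeeping, together with the small-modulus edge cases, to be the main obstacle: in particular $m=2$ requires separate care, since then $-2\equiv_m 0$ changes the value of the $\qbinom{m-1}{m-2}_q$ factor entering (III) and (IV). Once the two facts of the first paragraph are established, each individual identity is a short and mechanical verification.
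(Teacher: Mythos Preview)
Your plan is correct and follows essentially the same approach as the paper: both arguments verify each of the nine identities by direct computation at the primitive $m$:th root of unity, matching the ordinary binomial factors via $\binom{2d-1}{k}=\binom{2d-1}{2d-1-k}$ and reducing the $q$-binomial factors to simple expressions. Your device of first proving the closed form $\qbinom{m-1}{k}_q=(-1)^k q^{-\binom{k+1}{2}}$ is a tidy unification that the paper does not use --- there each case is handled with an ad~hoc $q$-binomial manipulation (e.g.\ reducing Case~I to $q^r(1-q^{m-r})=q^r-1$) --- but the logical structure, the pairings, the geometric-sum computation in Case~IV, and the final covering argument are the same.
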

\begin{proof}
\textbf{Case I:}
For $q= \exp(2\pi i \ell/m)$, we want to show $A(s,j)  = -A(-s,w-j)$,
whenever $1\leq j < w$ and $\delta s \not \equiv_m 0$.
We must prove that
 \begin{align*}
 q^{s^2\delta + s(j+1)}
\binom{2d-1}{d+\lfloor \delta s / m \rfloor}
\qbinom{m-1}{  \modpart{m}{\delta s} }_q
&=
-q^{s^2\delta - s(w-j+1)}
\binom{2d-1}{d+\lfloor -\delta s / m \rfloor}
\qbinom{m-1}{ \modpart{m}{-\delta s} }_q.
\end{align*}
Let us first assume that $s>0$ and
let $r \coloneqq (\delta s \mod m)$, so that $0<r<m$.
We must show that
 \begin{align*}
q^{s\delta}
\binom{2d-1}{d+\lfloor \delta s / m \rfloor}
\qbinom{m-1}{ r}_q &=
-\binom{2d-1}{d-1-\lfloor \delta s / m \rfloor}
\qbinom{m-1}{ m-r }_q
\\
q^{r}
\binom{2d-1}{d+\lfloor \delta s / m \rfloor}
\frac{1-q^{m-r}}{1-q^{m}}
\qbinom{m}{r}_q &=
-\binom{2d-1}{d+\lfloor \delta s / m \rfloor}
\frac{1-q^{m-(m-r)}}{1-q^{m}}
\qbinom{m}{ m-r }_q
\\
q^{r}
(1-q^{m-r})
&=
q^{r}-1
\end{align*}
which is true.
The case $s<0$ is treated in a similar manner.

\textbf{Case II:}
For $q= \exp(2\pi i \ell/m)$, we want to show $B(s,j)  = -B(-s-1,w-j)$,
whenever $1\leq j < w$ and $j + \delta s + 1 \not \equiv_m 0$.
We must prove that
 \begin{align*}
 &q^{s^2\delta + s(j+1)}
\binom{2d-1}{d+\lfloor \frac{j + \delta s}{m} \rfloor}
\qbinom{m-1}{  \modpart{m}{\delta s} }_q \\
=
&-q^{(-s-1)^2\delta + (-s-1)(w-j+1)}
\binom{2d-1}{d+\lfloor \frac{w-j+\delta(-s-1)}{m} \rfloor}
\qbinom{m-1}{ \modpart{m}{w-j+\delta(-s-1)} }_q \\
=
&-q^{s(j + \delta s + 1) + j + \delta s + 1}
\binom{2d-1}{d+\lfloor \frac{-2-j-\delta s}{m} \rfloor}
\qbinom{m-1}{ \modpart{m}{-2-j-\delta s} }_q \\
=
&-q^{s(j + \delta s + 1) + j + \delta s + 1}
\binom{2d-1}{d - 1 -\lfloor \frac{j+\delta s}{m} \rfloor}
\qbinom{m-1}{ \modpart{m}{-2-j-\delta s} }_q.
\end{align*}
Now, let $j + \delta s \equiv_m r$, $0 < r < m$. Then, we need to show
 \begin{align*}
 &q^{s(r+1)}
\binom{2d-1}{d+\lfloor \frac{j + \delta s}{m} \rfloor}
\qbinom{m-1}{r}_q \\
=
&-q^{s(r+1) + r + 1}
\binom{2d-1}{d - 1 -\lfloor \frac{j+\delta s}{m} \rfloor}
\qbinom{m-1}{r + 1}_q.
\end{align*}
This follows from that
\[
q^{r + 1} \qbinom{m-1}{r + 1}_q = q^{r + 1}\qbinom{m-1}{r}_q \frac{1-q^{m-r-1}}{1-q^{r+1}} =
q^{r + 1}\qbinom{m-1}{r}_q \frac{1}{q^{r+1}}\frac{q^{r+1}-1}{1-q^{r+1}}
= -\qbinom{m-1}{r}_q.
\]

\textbf{Case III:}
Let $r \equiv_m \delta (s+1)$ with $0\leq r < m$.
We want to prove that $A(s+1, w) = -B(s, w)$
under the condition that $r \notin \{0,1\}$, which implies that $s \neq -1$.

This amounts to proving
 \begin{align*}
 q^{(s+1)^2\delta + (s+1)(\delta-1)}
\binom{2d-1}{d+\lfloor \delta (s+1) / m \rfloor}
\qbinom{m-1}{  \modpart{m}{\delta (s+1)} }_q
= \\
q^{s^2\delta + s(\delta-1)}
\binom{2d-1}{d+\lfloor (\delta (s+1)-2) / m \rfloor}
\qbinom{m-1}{  \modpart{m}{\delta (s+1)-2 } }_q
\end{align*}
Since the binomials are equal under our conditions, it is enough to show that
 \begin{align*}
q^{2 r}
\qbinom{m-1}{ r }_q
&=
q
\qbinom{m-1}{ r-2  }_q
\\
q^{2 r}
\frac{[m-r]_q}{[r]_q}
\qbinom{m-1}{ r-1 }_q
&=
q
\frac{[r-1]_q}{[m-r+1]_q}
\qbinom{m-1}{ r-1 }_q
\\
q^{2 r}
\frac{1-q^{-r}}{1-q^r}
&=
q
\frac{1-q^{r-1}}{1-q^{1-r}}
\end{align*}
and it is easy to verify that these are equal.

\textbf{Case IV:}
We need to prove that
\[
B(s-1, w)  + \sum_{1 \leq j \leq w} A(-s, j) = -\binom{2d-1}{d+\frac{\delta s}{m}-1},
\]
whenever $\delta s \equiv_m 0$, $s \not \equiv_m 0$ and $q=\exp(2\pi i \ell/m)$.
Note that $m\geq 2$ in this case.
Inserting the definitions, we need to evaluate
\begin{align*}
&-q^{(s-1)^2\delta + (s-1)(w+1)}
\binom{2d-1}{d+\lfloor (w+\delta (s-1)) / m \rfloor}
\qbinom{m-1}{ \modpart{m}{ w+\delta (s-1)}}_q
+\\
&\left(\sum_{j=1}^w q^{s^2\delta - s(j+1)}\right)
\binom{2d-1}{d+\lfloor -\delta s / m \rfloor}
\qbinom{m-1}{ \modpart{m}{-\delta s } }_q
\end{align*}
Some simplification gives that this is equal to
\begin{align*}
-q^{1-s}
\binom{2d-1}{d+\lfloor (\delta s-2) / m \rfloor}
\qbinom{m-1}{ \modpart{m}{ \delta s-2}}_q
+
q^{-s}\left(\sum_{j=1}^w q^{-sj}\right)
\binom{2d-1}{d + \frac{\delta s}{m} -1 }
\end{align*}
which becomes
\begin{align*}
-q^{1-s}
\binom{2d-1}{d+ \frac{\delta s}{m} - 1 }
\qbinom{m-1}{ m - 2}_q
+
q^{-s}\left(\sum_{j=1}^w q^{-sj}\right)
\binom{2d-1}{d + \frac{\delta s}{m} -1 }.
\end{align*}
Thus, it suffices to verify that
\[
\left(\sum_{j=1}^{\delta-2} q^{-sj}\right)-q\qbinom{m-1}{ m - 2}_q = -q^{s},
\]
which is straightforward.

\bigskip

\textbf{Case V--IX:} These are straightforward to prove.

%
%
%

\end{proof}

\begin{proposition}\label{prop:evaluation}
Whenever $md = n$, $m\geq 2$ and $\gcd(\ell,m)=1$, we have that
$|\CDP(n,w)|_q$ evaluated at $q=\exp(2\pi i \ell/m)$ is equal to $|\CDP(d,w)|$.
\end{proposition}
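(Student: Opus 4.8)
The plan is to prove \cref{prop:evaluation} by assembling the identities collected in \cref{lem:qCases} into a single evaluation. Starting from the $q$-Lucas form \eqref{eq:qLucasVersion}, the expression for $|\CDP(n,w)|_q$ at $q = \exp(2\pi i \ell/m)$ is a double sum over $s \in \setZ$ and $1 \le j \le w$ of the terms $A(s,j) + B(s,j)$. The key structural fact, stated at the end of \cref{lem:qCases}, is that the nine cases \ref{c:AA}--\ref{c:Bw1} partition all these terms exactly once. So the strategy is to sort every $A(s,j)$ and $B(s,j)$ into exactly one of the nine cases according to the congruence conditions on $\delta s$, $s$, and $j + 1 + \delta s$ modulo $m$, and then show that almost everything cancels in pairs, leaving behind precisely the terms that reconstruct $|\CDP(d,w)|$ via \cref{prop:circularQCount}.

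First I would isolate the cancellations. Cases \ref{c:AA} and \ref{c:BB} are sign-reversing involutions: $A(s,j) = -A(-s,w-j)$ and $B(s,j) = -B(-s-1,w-j)$ pair up terms with $j \neq w$ and the relevant non-vanishing congruence condition, so all such terms cancel against each other when summed over $s \in \setZ$. Case \ref{c:AB}, $A(s,w) = -B(s-1,w)$, cancels the bulk of the $j = w$ terms whenever $\delta s \not\equiv_m 0,1$. What remains are the ``boundary'' terms not covered by these three involutions: the $A$-terms with $s \equiv_m 0$ (Case \ref{c:Aj}), the $A(s,w)$ with $\delta s \equiv_m 1$ (Case \ref{c:Aw}), the $B(s,j)$ with $j+1+\delta s \equiv_m 0$, $j<w$ (Case \ref{c:Bj}), the $B(s-1,w)$ terms in Cases \ref{c:Bw0} and \ref{c:Bw1}, and the combined leftover of Case \ref{c:Sum} arising when $\delta s \equiv_m 0$ but $s \not\equiv_m 0$. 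Each of these leftover cases evaluates to a single ordinary binomial coefficient $\binom{2d-1}{d + \text{something}}$ by \cref{lem:qCases}, since the $q$-binomial factor collapses under the congruence condition.

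Next I would collect these surviving binomial coefficients and reindex them so that they match the $q=1$ (really, the $d$-level) form of \cref{prop:circularQCount}. Concretely, the goal is to show the leftover sum equals
\[
\sum_{t \in \setZ}\sum_{j=1}^w \left( \binom{2d-1}{d-1-\delta t} - \binom{2d-1}{d+j+\delta t} \right) = |\CDP(d,w)|,
\]
the latter equality being \cref{eq:circularQCountGeneral} specialized at $q=1$ with $n$ replaced by $d$. The main bookkeeping task is to track how the $s$-indexing at level $n$ descends to a $t$-indexing at level $d$: a condition like $\delta s \equiv_m 0$ forces $\delta s = m \cdot \delta t$ for integer $t$ (using $m d = n$ and compatibility of $\delta$), so the floor $\lfloor \delta s/m\rfloor$ becomes $\delta t$, and the binomial argument $d + \delta s/m$ becomes exactly the $d + \delta t$ appearing at the $d$-level. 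Carefully matching the Case \ref{c:Aj} positive $A$-terms against the Case \ref{c:Bw0}/\ref{c:Sum} negative $B$-terms, and the Case \ref{c:Aw} against Case \ref{c:Bw1} and Case \ref{c:Bj}, should line up each surviving binomial with its counterpart in the $d$-level formula.

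The hard part will be the reindexing and sign bookkeeping in the previous step: verifying that, after the involutions cancel, the residual binomials appear with exactly the right multiplicities and signs, with no double-counting and no omission. This is where the final clause of \cref{lem:qCases} — that the nine cases cover each term exactly once — is indispensable, since it guarantees the partition is clean and lets me account for every term. Once the leftover binomial sum is shown to equal the $q=1$ specialization of \cref{eq:circularQCountGeneral} at parameter $d$, \cref{prop:circularQCount} (or equivalently \cref{lem:rewittenFormulaForCounting}) immediately identifies it with $|\CDP(d,w)|$, completing the evaluation. I expect the individual algebraic identities to be routine, so the proof write-up would lean on \cref{lem:qCases} for the term-level facts and concentrate the exposition on the global cancellation-and-matching argument.
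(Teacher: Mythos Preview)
Your proposal is correct and follows essentially the same route as the paper: use \cref{lem:qCases} to cancel Cases \ref{c:AA}--\ref{c:AB} in pairs, collect the surviving binomial coefficients from Cases \ref{c:Sum}--\ref{c:Bw1}, and then reindex to recognize $|\CDP(d,w)|$. The only minor difference is that the paper finishes by matching against the rewritten closed form in \cref{lem:rewittenFormulaForCounting} rather than directly against the $q=1$ specialization of \eqref{eq:circularQCountGeneral}; you mention both options, and indeed the former makes the final reindexing (especially handling the double sum over $j$ and the constraint $j+1+\delta s \equiv_m 0$) considerably cleaner.
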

\begin{proof}
In \cref{lem:qCases}, the first three cases cancel, so we know that
$|\CDP(n,w)|_q$ evaluated at the root of unity is equal to the sum of the six remaining cases.
After reordering, the sum of the cases is given by the expression
\begin{align}\label{eq:sixSums}
& w\sum_{\substack{s\in \setZ \\  s \equiv_m 0}} \binom{2d-1}{d +  \frac{\delta s }{m} }
-\sum_{\substack{s\in \setZ \\ \delta s \equiv_m 0 \\ s \not\equiv_m 0}} \binom{2d-1}{d + \frac{\delta s}{m}-1 }
+\sum_{\substack{s\in \setZ \\  s \equiv_m 0 }} \binom{2d-1}{d +  \frac{\delta s}{m} -1 } \\
&
-\sum_{\substack{s\in \setZ \\  \delta s \equiv_m 1 }}\binom{2d-1}{d + \frac{\delta s -1}{m} - 1}
-\sum_{j=1}^{w-1} \sum_{\substack{s\in \setZ \\ j + 1 + \delta s \equiv_m 0}}  \binom{2d-1}{d + \frac{\delta s + j +1}{m} - 1 }
-\sum_{\substack{s\in \setZ \\ \delta s \equiv_m 1 }} \binom{2d-1}{d +  \frac{\delta s-1}{m}  } \notag
.
\end{align}
We note that
\[
 \sum_{\substack{s\in \setZ \\ \delta s \equiv_m 1 }} \binom{2d-1}{d +  \frac{\delta s-1}{m}  }
  = \sum_{\substack{s\in \setZ \\ \delta s \equiv_m 1 }} \binom{2d-1}{d + \frac{\delta(-s)+1}{m} - 1 }
   = \sum_{\substack{s\in \setZ \\ \delta s + 1 \equiv_m 0 }} \binom{2d-1}{d + \frac{\delta s+1}{m} - 1 }
\]
so we can merge the fifth and sixth sum in \eqref{eq:sixSums},
and shift the index in the fourth sum.
Furthermore, the second and third sum can be rewritten, by adding and subtracting the third sum.
We get
\begin{align}\label{eq:fiveSums}
& w \sum_{\substack{s\in \setZ \\  s \equiv_m 0}} \binom{2d-1}{d +  \frac{\delta s }{m} }
+2\sum_{\substack{s\in \setZ \\  s \equiv_m 0 }} \binom{2d-1}{d +  \frac{\delta s}{m} -1 }
-\sum_{\substack{s\in \setZ \\ \delta s \equiv_m 0}} \binom{2d-1}{d + \frac{\delta s}{m}-1 }
\\
&
-\sum_{\substack{s\in \setZ \\ \delta - 1 + \delta s \equiv_m 0 }} \binom{2d-1}{d +  \frac{\delta (s+1)-1}{m}  }
-\sum_{j=0}^{w-1} \sum_{\substack{s\in \setZ \\ j + 1 + \delta s \equiv_m 0}}  \binom{2d-1}{d + \frac{\delta s + j +1}{m} - 1 }
\notag
.
\end{align}
The last three terms can be merged, and we do some further simplifications:
\begin{align}\label{eq:fourSums}
& w \sum_{\substack{t \in \setZ}} \binom{2d-1}{d +  \delta t }
+2\sum_{\substack{t \in \setZ  }} \binom{2d-1}{d +  \delta t }
-\sum_{j=0}^{w+1} \sum_{\substack{s\in \setZ \\ j + 1 + \delta s \equiv_m 0}}  \binom{2d-1}{d + \frac{\delta s + j +1}{m} - 1 }
.
\end{align}
Finally, we note that
\[
 \sum_{j=0}^{w+1} \sum_{\substack{s\in \setZ \\ j + 1 + \delta s \equiv_m 0}}  \binom{2d-1}{d + \frac{\delta s + j +1}{m} - 1 }
 =\sum_{r \in \setZ} \binom{2d-1}{d+r}
\]
and thus we have equality with \eqref{lem:enumerationRewritten} in \cref{lem:rewittenFormulaForCounting}.
\end{proof}

\section{The subset cyclic sieving phenomenon}\label{sec:subset}

Recall that $\CDP(n,w)$ is a family of lattice
paths $L$ of length $2n$, ending with a north step.
Note that such a lattice path $L$ is in $\CDP_k(n,w)$ if and only if the binary
word of $L$ is invariant under cyclic shift by $2k$ steps.
However, the action of shifting the area sequence $k$ steps and
shifting the underlying binary word $2k$ steps are not equivalent --- the
family $\CDP(n,w)$ is not closed under such a shifting of the binary word.
This curious observation leads us to make the following definition.

\begin{definition}
Let $Y \subseteq X$ be a set of combinatorial objects and $C_n = \langle g \rangle$
be a cyclic group acting on $X$.
Let $f(q) \in \setZ[q]$ with non-negative coefficients, such that $f(1)=|Y|$.
Then $(Y \subset X ,C_n,f(q))$ is a \emph{subset cyclic sieving phenomenon} if
for every $k\in [n]$ we have
\[
 f(\omega_n^k) = |\{ y\in Y : g^k \cdot y = y \}|.
\]
\end{definition}

We shall need the following theorem from \cite[Thm. 2.7]{AlexanderssonAmini2018}.
\begin{theorem}\label{thm:AA}
Let $f(q)\in \setZ[q]$ take non-negative integer values at all $n$th roots of unity,
and let $X$ be a set of cardinality $f(1)$.
Define
\[
S_k \coloneqq \sum_{j|k} \mu(k/j)f(\omega_n^j) \text{ whenever $k|n$}.
\]
If $S_k\geq 0$ for all $k$, then there is a
cyclic group action $C_n$ acting on $X$, such that $(X,C_n,f(q))$ is a CSP-triple.
\end{theorem}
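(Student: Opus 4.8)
The plan is to realize the desired $C_n$-action by prescribing, for each divisor $e \mid n$, the number $N_e$ of orbits of size $e$, and to read those numbers off from the quantities $S_e$. Any choice of non-negative integers $(N_e)_{e\mid n}$ with $\sum_{e\mid n} e N_e = |X|$ determines a $C_n$-action on $X$: partition $X$ into $N_e$ blocks of size $e$ and let a generator $g$ act as an $e$-cycle on each such block; since $e\mid n$ each such permutation has order dividing $n$, so this extends to a genuine homomorphism $C_n \to \mathrm{Sym}(X)$. For such an action $g^k$ fixes exactly the elements in orbits of size $e$ with $e\mid k$, so the number of fixed points of $g^k$ equals $\sum_{e\mid \gcd(n,k)} e N_e$, which depends on $k$ only through $\gcd(n,k)$.

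First I would record that $f(\omega_n^k)$ likewise depends only on $\gcd(n,k)$. If $\zeta,\zeta'$ are primitive $m$-th roots of unity then $\zeta'=\sigma(\zeta)$ for some $\sigma\in\mathrm{Gal}(\setQ(\zeta)/\setQ)$, and since $f$ has integer coefficients with $f(\zeta)\in\setZ$ we get $f(\zeta')=\sigma(f(\zeta))=f(\zeta)$. Writing $T_e := f(\omega_n^e)$ for $e\mid n$ (so $T_{\gcd(n,k)}=f(\omega_n^k)$), the CSP identity becomes the requirement $\sum_{d\mid e} d N_d = T_e$ for all $e\mid n$. By Möbius inversion over the divisor lattice this is equivalent to $e N_e = \sum_{d\mid e}\mu(e/d)T_d = \sum_{d\mid e}\mu(e/d)f(\omega_n^d) = S_e$, so the construction forces $N_e = S_e/e$.

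It then remains to verify three points. Non-negativity $N_e\ge 0$ is exactly the hypothesis $S_e\ge 0$. The total count is automatic: summing $eN_e=S_e$ over $e\mid n$ and interchanging the order of summation gives $\sum_{e\mid n}S_e = \sum_{d\mid n} f(\omega_n^d)\sum_{d\mid e\mid n}\mu(e/d) = f(\omega_n^n)=f(1)=|X|$, so the $N_e$ assemble into an action on a set of the right cardinality; and by construction the fixed-point count $\sum_{d\mid\gcd(n,k)} dN_d$ equals $T_{\gcd(n,k)}=f(\omega_n^k)$, which is precisely the CSP condition.

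The main obstacle is integrality, namely $e\mid S_e$ so that $N_e=S_e/e\in\setZ$. The key is that the sequence $d\mapsto f(\omega_n^d)$ satisfies the Gauss congruences $\sum_{d\mid e}\mu(e/d)f(\omega_n^d)\equiv 0 \pmod e$. To see this, write $f(q)\equiv\sum_\ell b_\ell q^\ell \bmod (q^n-1)$ with $b_\ell\in\setZ$; applying a Galois automorphism $\omega_n\mapsto\omega_n^t$ with $\gcd(t,n)=1$ to $b_\ell=\tfrac1n\sum_a f(\omega_n^a)\omega_n^{-a\ell}\in\setZ$ shows $b_\ell=b_{t\ell}$, so the multiplicities are constant on Galois orbits of $n$-th roots of unity. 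Hence $f(\omega_n^d)=\sum_{m\mid n}\beta_m\, p_m(d)$, where $p_m(d)=\sum_\zeta \zeta^d$ sums over the primitive $m$-th roots of unity and $\beta_m\in\setZ$. Each $p_m(d)$ is the sum of $d$-th powers of the roots of the cyclotomic polynomial $\Phi_m\in\setZ[x]$, hence equals $\mathrm{tr}(C^d)$ for the integer companion matrix $C$ of $\Phi_m$; for traces of powers of an integer matrix the Gauss congruences are classical (equivalently $\exp(\sum_d \mathrm{tr}(C^d)t^d/d)=\det(I-tC)^{-1}\in\setZ[[t]]$), and they are preserved under the $\setZ$-linear combination $\sum_m\beta_m p_m$. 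This gives $e\mid S_e$ and completes the construction.
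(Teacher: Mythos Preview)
The paper does not actually prove \cref{thm:AA}; it is quoted verbatim from \cite[Thm.~2.7]{AlexanderssonAmini2018} and used as a black box. So there is no ``paper's proof'' to compare against here, and your write-up stands or falls on its own.

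Your argument is correct. The construction of the action from the orbit-size profile $(N_e)_{e\mid n}$ is standard, and your M\"obius-inversion identification $eN_e=S_e$ together with the telescoping sum $\sum_{e\mid n}S_e=f(1)$ is clean. The only genuinely nontrivial step is integrality, $e\mid S_e$, and your route via the decomposition $f(\omega_n^d)=\sum_{m\mid n}\beta_m\, c_m(d)$ into Ramanujan sums (using that the reduced coefficients $b_\ell$ are constant on Galois orbits, which in turn uses the hypothesis $f(\omega_n^a)\in\setZ$) followed by the Gauss congruences for $d\mapsto\mathrm{tr}(C^d)$ with $C$ an integer matrix is a valid and reasonably efficient justification. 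The parenthetical appeal to $\det(I-tC)^{-1}\in\setZ[[t]]$ is a legitimate way to invoke the Gauss congruences, though you might want to cite a source (e.g.\ the characterisation of integer-valued ghost sequences in the Witt vector literature, or the elementary proofs of the necklace congruence for $\mathrm{tr}(A^d)$) rather than leave it as ``classical.''

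One small presentational point: in the first paragraph you already observe that $f(\omega_n^k)$ depends only on $\gcd(n,k)$ via Galois conjugation; in the integrality paragraph you essentially reprove a sharper version of the same fact to get $b_\ell=b_{t\ell}$. It would read more smoothly to state once at the outset that the hypothesis $f(\omega_n^a)\in\setZ$ forces both $f(\omega_n^k)$ and the DFT coefficients $b_\ell$ to be constant on $(\setZ/n\setZ)^\times$-orbits, and then use both consequences where needed.
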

Note that the integers $S_k$ are exactly the number of elements in $X$ which are in a $C_n$-orbit of size $k$.

\begin{proposition}\label{prop:subsetToCSP}
If $(Y \subset X ,C_n,f(q))$ is a subset-CSP,
then there is a group action $\hat{C}_n$ on $Y$ such that $(Y,\hat{C}_n,f(q))$
is a CSP-triple.
\end{proposition}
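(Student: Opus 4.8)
The plan is to reduce the statement to an application of \cref{thm:AA}. The key observation is that a subset cyclic sieving phenomenon already encodes, at each root of unity, exactly the data one needs: we are given a polynomial $f(q)$ with non-negative coefficients such that $f(\omega_n^k)$ counts the fixed points of $g^k$ acting on $Y\subseteq X$. Although $Y$ itself need not be invariant under the $C_n$-action on $X$ (this is precisely the curious feature motivating the definition), the numbers $f(\omega_n^k)$ nonetheless behave formally like fixed-point counts of \emph{some} cyclic action, and that is all \cref{thm:AA} requires.

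First I would set $\hat{X} \coloneqq Y$ regarded merely as an abstract finite set of cardinality $f(1) = |Y|$, forgetting the ambient action on $X$. Since $(Y \subset X, C_n, f(q))$ is a subset-CSP, the polynomial $f(q)$ takes the non-negative integer values $f(\omega_n^k) = |\{y \in Y : g^k \cdot y = y\}|$ at all $n$th roots of unity, so the hypotheses of \cref{thm:AA} concerning $f$ are met. Next I would verify the positivity condition on the Möbius-type sums: define, for $k \mid n$,
\[
S_k \coloneqq \sum_{j \mid k} \mu(k/j)\, f(\omega_n^j).
\]
By the remark following \cref{thm:AA}, $S_k$ is the count of elements lying in a $C_n$-orbit of size exactly $k$ in \emph{any} genuine cyclic action realizing these fixed-point numbers, so establishing $S_k \geq 0$ is the crux.

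The main obstacle, then, is showing $S_k \geq 0$ for all $k \mid n$. The cleanest route is to interpret $f(\omega_n^j)$ directly as the cardinality of the fixed-point set $Y^{g^j} = \{y \in Y : g^j \cdot y = y\}$ inside $Y$. Even though $Y$ is not closed under $g$, each individual fixed-point set $Y^{g^j}$ is a well-defined subset of $Y$, and these sets are nested according to divisibility: if $j \mid j'$ and $g^{j} \cdot y = y$ then $g^{j'} \cdot y = y$, so $Y^{g^j} \subseteq Y^{g^{j'}}$. The quantity $S_k$ is then the inclusion-exclusion count of those $y \in Y$ whose minimal period under the $g$-action is exactly $k$, i.e.\ $y \in Y^{g^k}$ but $y \notin Y^{g^{k'}}$ for every proper divisor $k'$ of $k$. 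Since this is literally a count of a concrete (possibly empty) subset of $Y$, it is manifestly non-negative.

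With $S_k \geq 0$ established for all $k \mid n$, \cref{thm:AA} applies directly to the abstract set $Y$ and the polynomial $f(q)$, producing a cyclic group action $\hat{C}_n$ on $Y$ for which $(Y, \hat{C}_n, f(q))$ is a CSP-triple. This is exactly the conclusion sought, so the proof is complete. I expect the only subtlety to be the careful justification that $S_k$ equals the number of elements of $Y$ with minimal $g$-period $k$; once the nesting $Y^{g^j} \subseteq Y^{g^{j'}}$ for $j \mid j'$ is in hand, this is a standard Möbius-inversion argument and the non-negativity is immediate.
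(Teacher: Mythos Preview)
Your proposal is correct and follows essentially the same route as the paper: define $S_k = \sum_{j\mid k}\mu(k/j)f(\omega_n^j)$, argue $S_k\ge 0$ by interpreting it as a count of elements of $Y$ whose $C_n$-orbit in $X$ has size exactly $k$, and invoke \cref{thm:AA}. In fact your justification of $S_k\ge 0$ is more carefully stated than the paper's, which somewhat loosely writes ``elements in $X$'' where ``elements in $Y$ (with orbit size $k$ under the $C_n$-action on $X$)'' is what is meant.
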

\begin{proof}
Let
\[
 S_k \coloneqq \sum_{j|k} \mu(k/j)f(\omega_n^j).
\]
Then $S_k$ is the number of elements in $X$ in a $C_n$-orbit of size $k$,
so $S_k \geq 0$ for all $k\geq 1$.
The fact that a group action $\hat{C}_n$ exists on $Y$ now follows from \cref{thm:AA}.
\end{proof}

\subsection{Lattice paths with subset CSP}

We shall now provide an instance of a subset CSP,
on a family of lattice paths.

Let $\AVL(n,w)$ be the set of lattice paths from $(0,0)$ to $(n,n)$
that never touch the diagonals $\pm w$.
\begin{proposition}
 The maj-count of $\AVL(n,w)$ is given by
\[
 |\AVL(n,w)|_q = \sum_{s \in \setZ} q^{2s^2 w + s w} \left( \qbinom{2n}{n+2sw}_q - \qbinom{2n}{n+w+2sw}_q \right).
\]
\end{proposition}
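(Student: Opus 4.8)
The plan is to compute the major index generating function of $\AVL(n,w)$ by an inclusion–exclusion argument over the two forbidden diagonals, exactly mirroring the structure already set up for $\CDP(n,w)$ in \cref{prop:circularQCount}. A lattice path in $\AVL(n,w)$ goes from $(0,0)$ to $(n,n)$ and must avoid both the diagonal $y = x + w$ (the ``upper'' diagonal $+w$) and the diagonal $y = x - w$ (the ``lower'' diagonal $-w$). The key tool is \cref{lem:qHitCountMajorIndex}, which evaluates $H(\yvec; \langle \gamma, \gamma-\delta\rangle_{\ell})$, the maj-generating function for paths touching two fixed diagonals in an alternating pattern $\ell$ times. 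So first I would set $\yvec = (n,n)$ (note this differs from the $(n,n-1)$ used earlier, since here paths end at $(n,n)$), and identify the two diagonals $\pm w$ with the roles of ``left'' and ``right'' diagonals. The spacing between the two forbidden diagonals is $\delta = 2w$, which explains the $q^{2s^2 w + sw}$ prefactor: substituting $\delta = 2w$ and the appropriate $\gamma$ into the exponents $\ell^2\delta \pm \ell\gamma$ from \cref{lem:qHitCountMajorIndex} should reproduce $2s^2 w + sw$.

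Next I would carry out the inclusion–exclusion. The total maj-count of all NE-paths from $(0,0)$ to $(n,n)$ (with $2n$ steps, $n$ of each type) is $\qbinom{2n}{n}_q$ by \cref{lem:easyWordQAnalogue}. From this I subtract the paths that touch at least one forbidden diagonal, then add back those touching both in alternating fashion, and so on. Defining enumerators $L_s$ and $R_s$ for the number of alternating touches (analogous to $L_j(n,w,\ell)$ and $R_j(n,w,\ell)$ in \cref{def:latticeEnumeration}), the alternating-sum structure collapses into a single sum over $s \in \setZ$. Applying \cref{lem:qHitCountMajorIndex} with $\yvec=(n,n)$ and $\delta = 2w$ (re-deriving the analogue there, since the endpoint is $(n,n)$ rather than $(n,n-1)$, which shifts the $q$-binomial entries from $2n-1$ to $2n$ and from $n-1$ to $n$) should yield terms $\qbinom{2n}{n+2sw}_q$ (from paths whose first forbidden touch is on one diagonal) and $\qbinom{2n}{n+w+2sw}_q$ (from the other), each weighted by $q^{2s^2w + sw}$. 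Collecting the even- and odd-indexed contributions across all $s$ telescopes into the claimed expression
\[
 |\AVL(n,w)|_q = \sum_{s \in \setZ} q^{2s^2 w + s w} \left( \qbinom{2n}{n+2sw}_q - \qbinom{2n}{n+w+2sw}_q \right).
\]

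The main obstacle I anticipate is the re-derivation of \cref{lem:qHitCountMajorIndex} for the endpoint $\yvec=(n,n)$ instead of $(n,n-1)$: the reflection bijections $\phi$ and $\psi$ from \cref{lem:diagonalHitMajorIndexCount} and the iterated recursion in \cref{cor:evenBouncesRecursion} apply verbatim, but I must track carefully how the endpoint shift propagates through the exponents and the $q$-binomial entries, so that the prefactor genuinely becomes $2s^2 w + sw$ rather than something off by a linear term in $s$. A secondary subtlety is confirming that the symmetric avoidance of \emph{both} diagonals $\pm w$ (rather than $0$ and $w+2$ as in the circular case) produces exactly the stated symmetric indexing $n + 2sw$ and $n + w + 2sw$; here I would verify the base cases $s=0$ and $s=\pm 1$ by hand, and check that the two halves of the inclusion–exclusion (paths first touching the $+w$ diagonal versus first touching the $-w$ diagonal) combine with the correct signs. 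Once the bookkeeping of exponents and binomial entries is pinned down, the identity follows by exactly the same telescoping inclusion–exclusion used in the proof of \cref{prop:circularQCount}.
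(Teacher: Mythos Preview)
Your proposal is correct and follows exactly the route the paper intends: the paper's own proof consists of the single sentence ``The proof is analogous to the inclusion-exclusion argument in \cref{prop:circularQCount},'' and your plan unpacks precisely that analogy, with the correct identifications $\yvec=(n,n)$, $\gamma=w$, $\delta=2w$, and the re-derivation of \cref{lem:qHitCountMajorIndex} for the new endpoint. The anticipated obstacles you flag (tracking the endpoint shift through \cref{cor:evenBouncesRecursion} and matching the $s\in\setZ$ indexing to the even/odd alternation counts) are real but routine, and your bookkeeping is on track.
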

\begin{proof}
 The proof is analogous to the inclusion-exclusion argument in \cref{prop:circularQCount}.
\end{proof}

We can now provide an example of a subset-CSP on the set $\AVL(n,w)$.
Notice that $\AVL(n,w)$ is a subset of $\AVL(n,n+1)$ --- the set of \emph{all}
lattice paths from $(0,0)$ to $(n,n)$.
\begin{theorem}\label{thm:avlCSP}
Let $n,w \geq 1$ such that $\gcd(n,w)=1$.
Let $C_n$ act on $\AVL(n,w)$ by letting the generator $\beta$
shift the binary word associated with the path two steps.
Then
 \[
  (\AVL(n,w)\subset \AVL(n,n+1), \langle \beta \rangle, |\AVL(n,w)|_q)
 \]
is a subset-CSP-triple.
\end{theorem}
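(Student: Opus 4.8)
The plan is to verify the two defining equalities of a subset cyclic sieving phenomenon: that $f(1)=|\AVL(n,w)|$, which is immediate since $f(q)=|\AVL(n,w)|_q$ is by construction the major-index generating function of the set, and that for each $k\in[n]$ the evaluation $f(\omega_n^k)$ equals the number of $y\in\AVL(n,w)$ with $\beta^k\cdot y=y$. Writing $d\coloneqq\gcd(n,k)$ and $m\coloneqq n/d$, I will show that \emph{both} quantities equal $|\AVL(d,w)|$.

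For the fixed-point count, observe that the binary word of a path in $\AVL(n,w)$ has length $2n$, and $\beta^k$ shifts it by $2k$ positions. Such a word is invariant under a shift by $2k$ if and only if it is invariant under a shift by $\gcd(2k,2n)=2d$, i.e.\ it is periodic with period $2d$. Having $n$ ones, it is then the $m$-fold concatenation of a length-$2d$ block containing $d$ ones, so the associated path is the concatenation of $m$ translated copies of a single sub-path $P$ from $(0,0)$ to $(d,d)$. Since $y-x$ is unchanged under translation by $(d,d)$, the full path avoids the diagonals $\pm w$ precisely when $P$ does. This gives a bijection between the $\beta^k$-fixed points in $\AVL(n,w)$ and $\AVL(d,w)$, so the fixed-point count is $|\AVL(d,w)|$.

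For the evaluation, note that $\omega_n^k$ is a primitive $m$-th root of unity. Applying \cref{lem:qLucas} with modulus $m$ to each summand of the formula for $|\AVL(n,w)|_q$, and using $2n=2md$ so that $\modpart{m}{2n}=0$, every $q$-binomial $\qbinom{2n}{N}_q$ collapses to $\binom{2d}{N/m}$ when $m\mid N$ and to $0$ otherwise. Since $m\mid n$ and $\gcd(n,w)=1$ force $\gcd(m,w)=1$, the surviving terms are governed by $m\mid 2s$ (first binomial) and $m\mid 1+2s$ (second binomial). A short computation shows the phase $q^{2s^2w+sw}$ equals $1$ on all surviving terms when $m$ is odd: for the first binomial $s=mt$ gives an exponent divisible by $m$, and for the second $m\mid 2s+1$ lets one factor out $m$ from $sw(2s+1)$. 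The two resulting sums then reproduce the $q=1$ specialization $|\AVL(d,w)|=\sum_{s}\bigl(\binom{2d}{d+2sw}-\binom{2d}{d+w+2sw}\bigr)$; here one checks that $1+2s_0=mc_0$ has $c_0$ odd, so the index $w(c_0+2t)$ ranges over the correct arithmetic progression.

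When $m$ is even there are no solutions to $m\mid 1+2s$, so the second binomial never contributes, while $q^{m/2}=-1$ makes the surviving phase equal to $(-1)^{uw}$ after the substitution $s=(m/2)u$. Splitting this signed sum by the parity of $u$ yields $\sum_r\binom{2d}{d+2rw}+(-1)^w\sum_r\binom{2d}{d+w+2rw}$, and since $m$ even forces $n$ even and hence $w$ odd (again by $\gcd(n,w)=1$), the sign is $-1$ and we recover $|\AVL(d,w)|$. Combining both parity cases establishes $f(\omega_n^k)=|\AVL(d,w)|$, matching the fixed-point count, which proves the subset-CSP. The main obstacle is this root-of-unity evaluation: as in \cref{prop:evaluation}, the phase factors and the parity case split require care, but the crucial simplification is that the coprimality hypothesis $\gcd(n,w)=1$ both inverts $w$ modulo $m$ and forces $w$ to be odd exactly when $m$ is even.
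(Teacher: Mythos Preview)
Your proof is correct and follows essentially the same approach as the paper: both identify the $\beta^k$-fixed paths with $\AVL(d,w)$ via periodicity, then evaluate the $q$-analogue at a primitive $m$-th root of unity using the $q$-Lucas theorem, splitting into the cases $m$ odd and $m$ even and exploiting $\gcd(m,w)=1$ (forcing $w$ odd when $m$ is even) to control the surviving phases. Your write-up is slightly more explicit about the bijection for the fixed points and about why the odd-integer reparametrization in the $m$-odd case hits the right arithmetic progression, but the substance is the same.
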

\begin{proof}
We need to evaluate $|\AVL(n,w)|_q$ at $n$th roots of unity.
Let $q = \exp(2\pi i \ell/m)$, where  $m|n$ and $\gcd(\ell,m)=1$,
so that $q$ is a primitive $m$:th root of unity.
Note that it follows that $\gcd(w,m)=1$ as well and we introduce $d=n/m$.
Our goal is to show that $|\AVL(n,w)|_q$ evaluates to
the number of paths in $\AVL(n,w)$ fixed under a shift of $2d$ steps.
It is clear that such paths are in bijection with $\AVL(d,w)$.
There are two cases to consider.

\medskip
\noindent
\textbf{Case $m$ even.}
Using the $q$-Lucas theorem, \cref{lem:qLucas}, we have that
\begin{align}\label{eq:onePathEvenCase1}
 \sum_{s \in \setZ} q^{2s^2 w + s w} \qbinom{2n}{n+2sw}_q =
 \sum_{s \in \setZ} e^{\frac{2\pi i \ell w s (2s + 1)}{m}} \binom{2d}{d + \lfloor 2sw/m \rfloor}\qbinom{0}{\{2sw\}_m}_q.
\end{align}
Notice that the $q$-binomial is $0$ unless $m$ divides $2s$.
Hence, by letting $t \coloneqq 2s/m$, we can rewrite the sum as
\begin{align*}
  \sum_{t \in \setZ} e^{\pi i \ell w t (tm + 1)} \binom{2d}{d + t w} =
  \sum_{t \in \setZ} e^{\pi i  t } \binom{2d}{d + t w}
  =
  \sum_{t \in \setZ} \binom{2d}{d + 2 t w}
  -
  \sum_{t \in \setZ} \binom{2d}{d + w + 2 t w}
\end{align*}
since $\ell$, $w$ and $tm+1$ are all odd if $m$ is even.
In a similar fashion,
\begin{align}\label{eq:onePathEvenCase2}
 \sum_{s \in \setZ} q^{2s^2 w + s w} \qbinom{2n}{n+w+2sw}_q =
 \sum_{s \in \setZ} e^{\frac{2\pi i \ell w s (2s + 1)}{m}} \binom{2d}{d + \lfloor w(2s+1)/m \rfloor}\qbinom{0}{\{w(2s+1)\}_m}_q,
\end{align}
but the last term is always zero, since $m$ does not divide $w(2s+1)$

\medskip
\noindent
\textbf{Case $m$ odd.}
We consider \eqref{eq:onePathEvenCase1}, and see that the $q$-binomial expression vanish
unless $s$ is it is a multiple of $m$. We let $t = s/m$ and obtain
\[
 \sum_{s \in \setZ}\binom{2d}{d+2tw}.
\]
For the other term in \eqref{eq:onePathEvenCase2}, $2s+1$ must be an odd multiple of $m$ in order
for the $q$-binomial to be non-zero. Thus, $t = (2s+1)/m$ is an integer and
the expression simplifies to
\[
 \sum_{s \in \setZ}\binom{2d}{d+w+2tw}.
\]
In conclusion, in both above cases, $|\AVL(n,w)|_q$ evaluates to the expression we have for $|\AVL(d,w)|$.
\end{proof}

\begin{problem}
Find a \emph{natural} cyclic action $\hat{C_n}$ on $\AVL(n,w)$ that makes
\[
(\AVL(n,w), \hat{C}_n, |\AVL(n,w)|_q)
\]
into a CSP-triple.
\end{problem}

\section{Möbius action on binary words}\label{sec:mobius}

\subsection{A new cyclic sieving on binary words}

Let $\BW(n)$ denote the set of binary words of length $n$,
and define an action $\eta$ on $\BW(n)$ as
\[
\eta : (b_1,b_2,\dotsc, b_n) \mapsto (\hat{b}_{n-1},\hat{b}_{n},b_1,b_2,\dotsc,b_{n-2})
\]
where $\hat{b}_i \coloneqq 1-b_i$. Note that the shift is indeed two steps,
and that $\eta^{\circ n}(\bvec) = \bvec$ for all words $\bvec$ of length $n$.
For example,
\[
 \eta(101) = \hat{0}\hat{1}1 = 101, \qquad \eta(110010) = 011100.
\]
We extend the notation so that $\hat{\bvec} \coloneqq (\hat{b}_1,\dotsc,\hat{b}_n)$.

\begin{lemma}\label{lem:binWordMobiusFixedPoints}
The number of words in $\BW(n)$ fixed under $\eta^{\circ m}$
is given by $2^d$ if $\frac{n}{d}$ is odd and $0$ otherwise, where $d = \gcd(m,n)$.
\end{lemma}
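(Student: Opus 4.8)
The plan is to reinterpret $\eta$ as an honest (untwisted) shift on a larger, \emph{antiperiodic} extension of the word, which turns the whole problem into elementary modular arithmetic. Given $\bvec=(b_1,\dots,b_n)$, extend it to a bi-infinite sequence $(b_i)_{i\in\setZ}$ by declaring $b_{i+n}=\hat b_i$ for all $i$; this makes $(b_i)$ periodic with period $2n$, since $b_{i+2n}=\widehat{\hat b_i}=b_i$. Checking against the definition shows $\eta$ is exactly the right-shift by two of this extension: the new first two entries $\hat b_{n-1},\hat b_n$ equal $b_{-1},b_0$ under the rule $b_{i-n}=\hat b_i$, while the remaining entries $b_{i-2}$ for $i\ge 3$ require no flip. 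Consequently $\eta^{\circ m}$ is the right-shift by $2m$, and $\bvec$ is fixed by $\eta^{\circ m}$ if and only if $b_i=b_{i-2m}$ for all $i$ in the extension.

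Next I would reduce the exponent. The extension already satisfies $b_i=b_{i-2n}$, so a word fixed by $\eta^{\circ m}$ is invariant under shifting by both $2m$ and $2n$, hence under shifting by $\gcd(2m,2n)=2d$, where $d=\gcd(m,n)$. Conversely $2d\mid 2m$, so invariance under shift by $2d$ implies invariance under shift by $2m$. Thus $\bvec$ is fixed by $\eta^{\circ m}$ exactly when it is fixed by $\eta^{\circ d}$, i.e.\ when $b_i=b_{i-2d}$, so the extension acquires period $2d$.

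The crux is to reconcile this induced period $2d$ with the intrinsic antiperiodicity $b_{i+n}=\hat b_i$. Write $n=de$ with $e=n/d$. Reducing $de$ modulo the period $2d$ gives $b_{i+n}=b_{i+d\modpart{2}{e}}$, so antiperiodicity becomes $b_{i+d\modpart{2}{e}}=\hat b_i$. If $e$ is even then $\modpart{2}{e}=0$ and this reads $b_i=\hat b_i$, which is impossible; hence there are no fixed points when $n/d$ is even. If $e$ is odd then $\modpart{2}{e}=1$ and the condition collapses to the single relation $b_{i+d}=\hat b_i$, which already forces period $2d$ upon being applied twice. This relation determines every entry from $b_1,\dots,b_d$, and one checks that each of the $2^d$ choices of these first $d$ bits yields a well-defined word of length $n=de$ (with $e$ odd the $e$ blocks of length $d$ alternate between $b_1\cdots b_d$ and $\hat b_1\cdots\hat b_d$ and end on an unflipped block) that is genuinely fixed by $\eta^{\circ d}$. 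Therefore there are exactly $2^d$ fixed words when $n/d$ is odd.

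I expect the main obstacle to be the bookkeeping in the third step: correctly tracking how the two periodicities interact to produce the parity condition $\modpart{2}{e}$, and verifying the consistency claim that every assignment of the first $d$ bits extends via $b_{i+d}=\hat b_i$ to an $\eta^{\circ d}$-fixed word of the correct length without contradiction at the seam. The reinterpretation as a plain shift on the antiperiodic extension is what renders all of this routine; the remainder is elementary modular arithmetic.
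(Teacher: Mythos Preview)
Your proof is correct. The underlying idea is the same as the paper's---reduce from $\eta^{\circ m}$ to $\eta^{\circ d}$, then analyze how the $k=n/d$ blocks of length $d$ relate to one another, with the parity of $k$ deciding whether the relations are consistent---but your packaging via the bi-infinite antiperiodic extension is a genuinely cleaner route. The paper works directly on the finite word: it observes that $\eta^{\circ d}$ sends block $i$ to block $i+2 \pmod{k}$, with a flip exactly when the block wraps around the end, and then chases the resulting block equalities by hand in the two parity cases. Your reformulation replaces this ad~hoc wrap-around bookkeeping by the single relation $b_{i+n}=\hat b_i$ on an honest $\setZ$-indexed sequence, after which the fixed-point condition and the reduction to $d=\gcd(m,n)$ both become statements about periods, and the parity dichotomy drops out of the residue of $n$ modulo $2d$. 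What you gain is uniformity and a more transparent explanation of \emph{why} the parity of $n/d$ matters; what the paper's version gains is concreteness (one sees explicitly which blocks must coincide and which must be complementary).
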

\begin{proof}
Because $\eta$ generates a cyclic group of order $n$,
the number of words fixed by $\eta^{\circ m}$  is the same
as the number of words fixed by $\eta^{\circ d}$.
Therefore it suffices to show that if $n = kd$, then
$\eta^{\circ d}$ fixes $2^d$ elements if $k$ is odd, and $0$ elements otherwise.

Let $\bvec \in \BW(n)$ and partition $\bvec$ into $k$ contiguous blocks of length $d$.
Note that $\eta^{\circ d}$ maps block $i$ onto block $i+2$ (mod $k$).
Suppose now $\bvec$ is fixed under $\eta^{\circ d}$ and consider the following cases.

\textbf{Case $k$ even.} We have that
blocks $1,3,5,\dotsc,k-1$ must all be equal.
However, block $k-1$ and block $1$ must also be \emph{different}, as $\eta^{\circ d}$
not only shift bits $2d$ steps to the right, but also flips all bits that wrap around.
This is impossible, so there cannot be any fixed words in this case.

\textbf{Case $k$ odd.}
A similar argument as above shows that
all odd-indexed blocks are equal, all even-indexed blocks are equal,
and an even block is given by flipping all bits in an odd block.
Hence, the entire word is determined by the first block.
There are $2^d$ such possibilities, as there are $d$ bits in a block.

\end{proof}

\begin{lemma}\label{lem:bwqpoly}
For fixed $n$, all the expressions
\begin{align}\label{eq:binaryWordsQBinomialIdentity}
  (A)\quad \sum_{k=0}^n q^{\binom{k}{2}} \qbinom{n}{k}_q
  &&(B)\quad \prod_{j=0}^{n-1}(1+q^j)
  &&(C)\quad \sum_{\bvec \in \BW(n)}  q^{\maj(\bvec) + \maj(\hat{\bvec})}
\end{align}
are equal.
\end{lemma}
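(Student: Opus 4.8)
The plan is to prove the two equalities $(A)=(B)$ and $(B)=(C)$ separately. The first is the finite $q$-binomial theorem (specialized at $x=1$), which I would establish by a short induction using the $q$-Pascal identities already recorded in the paper; the second is a bijective generating-function argument that exploits the fact that bitwise complementation swaps ascents and descents.

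For $(A)=(B)$, set $f_n(q) \coloneqq \sum_{k=0}^n q^{\binom{k}{2}}\qbinom{n}{k}_q$ and prove by induction on $n$ that $f_n(q) = (1+q^{n-1})\,f_{n-1}(q)$, which together with $f_0(q)=1$ yields the product in $(B)$. Applying the $q$-Pascal identity $\qbinom{n}{k}_q = \qbinom{n-1}{k}_q + q^{n-k}\qbinom{n-1}{k-1}_q$ splits $f_n$ into two sums: the first is immediately $f_{n-1}$, and in the second I reindex by $k'=k-1$ and use $\binom{k'+1}{2}=\binom{k'}{2}+k'$, so that $q^{\binom{k}{2}}q^{n-k}=q^{\binom{k'}{2}+n-1}$, producing the factor $q^{n-1}f_{n-1}$. (Alternatively, this equality can simply be cited as the finite $q$-binomial theorem at $x=1$.)

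For $(B)=(C)$, the key observation is that a position $i\in[n-1]$ is a descent of $\hat{\bvec}$ exactly when $\hat b_i>\hat b_{i+1}$, i.e.\ when $b_i<b_{i+1}$, which is precisely an ascent of $\bvec$. Since the descents of $\bvec$ are the positions with $b_i>b_{i+1}$, the descent sets of $\bvec$ and $\hat{\bvec}$ are disjoint, and their union is exactly the set of positions where consecutive letters differ. Hence $\maj(\bvec)+\maj(\hat{\bvec})=\sum_{i=1}^{n-1} i\,[b_i\neq b_{i+1}]$. Now the map sending $\bvec$ to the pair $(b_1,\cvec)$ with $c_i\coloneqq[b_i\neq b_{i+1}]$ for $i=1,\dots,n-1$ is a bijection from $\BW(n)$ onto $\{0,1\}\times\{0,1\}^{n-1}$ (one reconstructs each bit via $b_{i+1}=b_i\oplus c_i$), under which the statistic becomes $\sum_{i=1}^{n-1} i\,c_i$. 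Summing $q$ to this statistic factors over the independent coordinates $c_1,\dots,c_{n-1}$, while $b_1$ is free, giving $(C)=2\prod_{i=1}^{n-1}(1+q^i)=\prod_{j=0}^{n-1}(1+q^j)=(B)$.

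I expect no serious obstacle. The only point requiring genuine care is the bookkeeping of the constant factor: the free bit $b_1$ contributes a factor $2$, which is exactly the $j=0$ factor $1+q^0$ that distinguishes $\prod_{j=0}^{n-1}$ from $\prod_{j=1}^{n-1}$; getting this right is what makes $(C)$ agree with $(B)$ rather than with half of it. The supporting claim that $\maj(\hat{\bvec})$ is concentrated on the ascents of $\bvec$ should be verified directly from the descent definition of the major index, but this is immediate once one notes that complementation reverses each strict inequality between adjacent letters.
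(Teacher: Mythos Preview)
Your proof is correct. For $(A)=(B)$ you do what the paper does (invoke the $q$-binomial theorem), with an optional self-contained induction as backup.

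For $(B)=(C)$ your route differs from the paper's. The paper argues by induction on $n$: appending a bit to a length-$(n-1)$ word either matches the last bit (statistic unchanged) or does not (statistic increases by $n-1$), which manufactures the factor $(1+q^{n-1})$. You instead make the underlying structure explicit all at once: you identify $\maj(\bvec)+\maj(\hat{\bvec})$ with $\sum_{i=1}^{n-1} i\,[b_i\neq b_{i+1}]$, change variables to the difference sequence $(b_1;c_1,\dots,c_{n-1})$, and read off the product directly. Your argument is essentially the paper's induction unrolled, but it has the advantage of isolating the clean combinatorial fact that the statistic depends only on the set of positions where consecutive bits differ; it also handles the factor-of-$2$ bookkeeping (the free bit $b_1$ versus the $j=0$ term of the product) more transparently than the inductive version.
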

\begin{proof}
\textit{Identity $(A)=(B)$.}
This is simply a consequence of the $q$-binomial theorem, (see \cite[p. 14]{KacCheung2001})
\[
 \prod_{j=0}^{n-1}(1+xq^k) = \sum_{k=0}^n  q^{\binom{k}{2}} \qbinom{n}{k}_q x^k.
\]
\textit{Identity $(B)=(C)$.} We do induction over $n$. The base case $n=1$ is easy.
Now assume that the identity hold for $n-1$. Consider a binary word $\bvec$ of length $n-1$.
We can either append $0$ or $1$ to make a word $\bvec'$ of length $n$.
If the last bit of $\bvec$ is equal to the appended bit,
\[
 \maj(\bvec) + \maj(\hat{\bvec}) = \maj(\bvec') + \maj(\hat{\bvec'}),
\]
otherwise, the right hand side larger by $n$.
\end{proof}

\begin{proposition}\label{prop:mobiusCSPonWords}
Let $\eta$ act on the binary words $\BW(n)$ as before.
Then
\[
\left(\BW(n), \langle \eta \rangle, \sum_{k=0}^n q^{\binom{k}{2}} \qbinom{n}{k}_q \right)
\]
is a CSP-triple.
\end{proposition}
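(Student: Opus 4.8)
The plan is to verify the cyclic sieving phenomenon directly from the definition, by evaluating the polynomial
\[
f(q) \coloneqq \sum_{k=0}^n q^{\binom{k}{2}} \qbinom{n}{k}_q
\]
at each power $\omega_n^m$ of a primitive $n$:th root of unity and matching it against the fixed-point count of $\eta^{\circ m}$ established in \cref{lem:binWordMobiusFixedPoints}. First I would invoke \cref{lem:bwqpoly} to rewrite $f(q)$ in its product form $\prod_{j=0}^{n-1}(1+q^j)$, since this closed form is far more convenient for root-of-unity evaluations than the sum of $q$-binomials. The key arithmetic input is that for a primitive $m$:th root of unity $\omega$, the cyclotomic factorization $\prod_{j=0}^{m-1}(1+\omega^j) = 1 - (-1)^m$ holds, because $\prod_{j=0}^{m-1}(x - \omega^j) = x^m - 1$ evaluated at $x = -1$ gives $(-1)^m - 1$, so $\prod_{j=0}^{m-1}(1+\omega^j) = (-1)^m\bigl((-1)^m - 1\bigr) = 1 - (-1)^m$; this equals $2$ when $m$ is odd and $0$ when $m$ is even.

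Next I would evaluate $f(\omega_n^m)$. Writing $d = \gcd(m,n)$ and $k = n/d$, note that $q = \omega_n^m$ is a primitive $k$:th root of unity. The product $\prod_{j=0}^{n-1}(1+q^j)$ splits into $d$ consecutive blocks, each of which is a full product $\prod_{j=0}^{k-1}(1+q^{j})$ over the $k$:th roots of unity (since $q$ has order $k$), giving
\[
f(\omega_n^m) = \left( \prod_{j=0}^{k-1}(1 + q^j) \right)^{d} = \bigl(1-(-1)^k\bigr)^d.
\]
When $k = n/d$ is odd this is $2^d$, and when $k$ is even this is $0^d = 0$ (using $d \geq 1$). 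This matches \cref{lem:binWordMobiusFixedPoints} exactly: the number of words fixed by $\eta^{\circ m}$ is $2^d$ if $n/d$ is odd and $0$ otherwise, with $d = \gcd(m,n)$.

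The potential obstacle is purely bookkeeping at the boundary: one must check that the case $m = n$ (equivalently $d = n$, $k = 1$) behaves correctly — there $q = 1$, and $f(1) = \prod_{j=0}^{n-1}2 = 2^n$, which must equal $|\BW(n)| = 2^n$, confirming $f(1) = |X|$ as required for a CSP-triple; and one must confirm the nonnegative-integer-coefficient hypothesis, which is immediate from the sum-of-$q$-binomials form since each $\qbinom{n}{k}_q$ has nonnegative coefficients. Since the fixed-point formula from \cref{lem:binWordMobiusFixedPoints} and the evaluation above agree term by term for all $m$, the triple satisfies the defining condition of the CSP, completing the proof. The only genuine subtlety is ensuring the block-splitting argument for the product is valid — namely that $q^j$ for $j$ ranging over a block of length $k$ runs over all $k$:th roots of unity once — which follows because $q$ is a primitive $k$:th root and the exponents $0,1,\dots,k-1$ are distinct mod $k$.
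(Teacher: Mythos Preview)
Your argument is correct and in fact cleaner than the paper's. The paper works directly with the sum $\sum_k q^{\binom{k}{2}}\qbinom{n}{k}_q$, applies the $q$-Lucas theorem (\cref{lem:qLucas}) to reduce each $\qbinom{n}{k}_\xi$ to an ordinary binomial, and then evaluates $\sum_{j=0}^d (-1)^{\ell j(mj-1)}\binom{d}{j}$ by a parity case analysis on $m$. You instead pass through \cref{lem:bwqpoly} to the closed product $\prod_{j=0}^{n-1}(1+q^j)$ and exploit the periodicity of the exponents modulo $k=n/d$ together with the cyclotomic identity $\prod_{\zeta^k=1}(1+\zeta)=1-(-1)^k$. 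Both routes land on the same dichotomy ($2^d$ versus $0$ according to the parity of $n/d$) and then invoke \cref{lem:binWordMobiusFixedPoints}; your version avoids the $q$-Lucas machinery entirely and makes the evaluation a one-line product computation, at the cost of first quoting the $(A)=(B)$ equality from \cref{lem:bwqpoly}. The paper's approach, by staying with the sum form, is more self-contained if one wanted to drop \cref{lem:bwqpoly}, but given that the lemma is already established, your argument is the more economical of the two.
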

\begin{proof}
We need to evaluate $B_n(q)\coloneqq \sum_{k=0}^n q^{\binom{k}{2}} \qbinom{n}{k}_q$
at $n$th roots of unity.
Suppose $n = md$ and $\xi = e^{2\pi i \frac{\ell}{m}}$ with $\gcd(\ell,m)=1$.
Note that \cref{lem:qLucas} gives that
\[
\qbinom{n}{k}_{\xi} =
\begin{cases}
  \binom{d}{k/m} & \text{if } m|k \\
  0 & \text{otherwise.}
\end{cases}
\]
Therefore,
\[
 B_n(\xi) = \sum_{k=0}^n \xi^{\binom{k}{2}} \qbinom{n}{k}_\xi
 =
 \sum_{j=1}^{d} \xi^{\binom{mj}{2}} \qbinom{n}{mj}_\xi
\]
since only terms in the left hand side where $k$ is a multiple of $m$ contribute.
We then get that
\begin{align*}
B_n(\xi) &= \sum_{j=0}^d e^{2\pi i \frac{\ell}{m} \binom{mj}{2}} \binom{d}{j}
        = \sum_{j=0}^d (-1)^{\ell j(mj-1)} \binom{d}{j} =
	\begin{cases}
	0 & \text{if}\ m \ \text{even},  \\
	2^d & \text{otherwise}.
	\end{cases}
\end{align*}
The two cases in the last step is as follows: if $m$ is even,
then $\ell$ must be odd, and it follows that the sum is $0$.
If $m$ is odd, then every term is positive and we get $2^d$.

That $\left(\BW(n), \langle \eta \rangle, \sum_{k=0}^n q^{\binom{k}{2}} \qbinom{n}{k}_q \right)$ exhibits
the CSP now follows from \cref{lem:binWordMobiusFixedPoints}.
\end{proof}

\subsection{Cyclic sieving on circular Möbius paths}

Recall the definition of circular Möbius paths,
and \cref{lem:mobiusBijection}, showing that $|\CMP(n)|=2^{n-1}$.
As $\CMP(n) \subseteq \CDP(n)$, we use the same
definition of major index for circular Möbius paths as for the circular Dyck paths.

Let $\OBW(n) \subset \BW(n)$ be the set of binary words with odd parity.
Note that the last bit can be deduced from the remaining word.
We have a bijection $M: \OBW(n) \to \CMP(n)$ by
\[
 M(b_1,\dotsc,b_n) \mapsto (b_1,b_2,\dotsc,b_{n-1},0, \hat{b_1}, \hat{b_2},\dotsc, \hat{b}_{n-1},1).
\]
Since $\eta$ preserves the parity of the word, $\eta$ act on $\OBW(n)$
and thus induces a $C_n$ action $\tilde{\eta}$ on $\CMP(n)$.

\begin{example}
Consider the first path in \cref{fig:mobiusExample} of length $16$.
The first half of the corresponding binary word
is given by \texttt{10110110}, which is identified with $\texttt{10110110}\in \OBW(n)$.
We apply $\eta$ to this word and get
$\mathtt{01101101}$.
We drop the last bit and append a $1$: \texttt{01101101} this determines a new Möbius path in $\CMP(8)$,
namely second path in \cref{fig:mobiusExample}.
\end{example}

\begin{lemma}\label{lem:mobiusSameQDistr}
We have the following formula for $|\CMP(n)|_q$, mod $(q^n-1)$:
\[
\sum_{\bvec \in \CMP(n) } q^{\maj(\bvec)} \equiv \frac{1}{2}\sum_{k=0}^n q^{\binom{k}{2}} \qbinom{n}{k}_q \mod (q^n-1).
\]
\end{lemma}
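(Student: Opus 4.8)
The plan is to transport the left-hand sum over $\CMP(n)$ to a sum over ordinary binary words using the bijection $M\colon \OBW(n)\to\CMP(n)$, to compute the major index of each image word explicitly, and then to exploit $q^n\equiv 1\pmod{q^n-1}$ together with \cref{lem:bwqpoly}. The whole argument reduces to one descent computation plus a symmetry observation.

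First I would rewrite the image of $M$ in a more transparent form. Setting $\tilde{\bvec}\coloneqq(b_1,\dots,b_{n-1},0)$, a word of length $n$ ending in $0$, the definition of $M$ shows that $M(\bvec)$ is exactly the concatenation $\tilde{\bvec}\,\hat{\tilde{\bvec}}$ of $\tilde{\bvec}$ with its bitwise complement, since $\hat{\tilde{\bvec}}=(\hat{b}_1,\dots,\hat{b}_{n-1},1)$. As $\bvec$ ranges over $\OBW(n)$, its first $n-1$ bits are free (the last bit being forced by parity), so $\tilde{\bvec}$ ranges over all $2^{n-1}$ words of length $n$ ending in $0$. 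Hence
\[
\sum_{\bvec \in \CMP(n)} q^{\maj(\bvec)}
= \sum_{\substack{\tilde{\bvec}\in \BW(n)\\ \tilde{\bvec}_n = 0}} q^{\maj(\tilde{\bvec}\,\hat{\tilde{\bvec}})}.
\]

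The key step is the major index of $c\coloneqq\tilde{\bvec}\,\hat{\tilde{\bvec}}$, a word of length $2n$. I would partition its descents into three groups: descents lying inside the first half $\tilde{\bvec}$, which contribute $\maj(\tilde{\bvec})$; the junction at position $n$, which is never a descent because $\tilde{\bvec}_n=0$; and descents inside the second half, which occur at positions $n+i$ precisely where $\hat{\tilde{\bvec}}$ has a descent, i.e.\ where $\tilde{\bvec}$ has an ascent. Since the descent set of $\hat{\tilde{\bvec}}$ equals the ascent set of $\tilde{\bvec}$, summing positions gives
\[
\maj(c) = \maj(\tilde{\bvec}) + \maj(\hat{\tilde{\bvec}}) + n\cdot\asc(\tilde{\bvec}),
\]
where $\asc$ counts ascents. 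Reducing modulo $q^n-1$ annihilates the $n\cdot\asc(\tilde{\bvec})$ term, so $q^{\maj(c)}\equiv q^{\maj(\tilde{\bvec})+\maj(\hat{\tilde{\bvec}})}$.

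Finally I would remove the restriction $\tilde{\bvec}_n=0$. The complementation involution $\bvec\mapsto\hat{\bvec}$ on $\BW(n)$ leaves the exponent $\maj(\bvec)+\maj(\hat{\bvec})$ unchanged while exchanging words ending in $0$ with words ending in $1$; hence the two halves of $\BW(n)$ contribute equally, and the sum over words ending in $0$ is exactly half of the sum over all of $\BW(n)$. Invoking the identity $(A)=(C)$ of \cref{lem:bwqpoly}, which equates $\sum_{\bvec\in\BW(n)}q^{\maj(\bvec)+\maj(\hat{\bvec})}$ with $\sum_{k=0}^n q^{\binom{k}{2}}\qbinom{n}{k}_q$, then yields the claimed congruence. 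The main obstacle is the descent bookkeeping for $c$ — correctly handling the junction and recognizing that the second half contributes the ascent statistic of $\tilde{\bvec}$; once that is settled, the reduction mod $q^n-1$ and the halving are purely formal.
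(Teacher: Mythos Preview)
Your proof is correct and follows essentially the same approach as the paper: rewrite the $\CMP(n)$ sum as a sum of $q^{\maj(\bvec\sim\hat{\bvec})}$ over length-$n$ binary words ending in $0$, reduce $\maj(\bvec\sim\hat{\bvec})\equiv \maj(\bvec)+\maj(\hat{\bvec})\pmod n$, halve by the complementation symmetry, and invoke \cref{lem:bwqpoly}. Your descent bookkeeping is in fact a bit more explicit than the paper's, since you give the exact identity $\maj(c)=\maj(\tilde{\bvec})+\maj(\hat{\tilde{\bvec}})+n\cdot\asc(\tilde{\bvec})$ and note that the junction contributes nothing when $\tilde{\bvec}_n=0$, whereas the paper is content with the congruence mod $n$.
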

\begin{proof}
 First note that by definition in \eqref{eq:mobiusCondition} that
 \begin{equation}\label{eq:cmpqFormula}
  |\CMP(n)|_q = \sum_{\bvec \in \CMP(n) } q^{\maj(\bvec)} =
  \sum_{\substack{\bvec \in \BW(n) \\ \bvec_n = 0 }} q^{\maj(\bvec\sim \hat{\bvec})}
 \end{equation}
where $\sim$ denotes concatenation.
Now observe that $\maj(\bvec\sim \hat{\bvec} ) \equiv_n \maj(\bvec) + \maj(\hat{\bvec})$.
Any descent in the second half of the concatenation contributing to $\maj$ can simply be shifted by $n$.
Furthermore, any contribution to $\maj$ by a descent between the first and
second half must be exactly $0$ or $n$.
It is also clear by symmetry that
\[
 \sum_{\substack{\bvec \in \BW(n) \\ \bvec_n = 0 }} q^{\maj(\bvec\sim \hat{\bvec})} =
 \sum_{\substack{\bvec \in \BW(n) \\ \bvec_n = 1 }} q^{\maj(\bvec\sim \hat{\bvec})}.
\]
This together with \cref{lem:bwqpoly} implies \eqref{eq:cmpqFormula}.
\end{proof}

We are now ready to present a cyclic sieving phenomenon on circular Möbius paths of size $n$.
\begin{theorem}
The triples
\[
\left(\CMP(n), \langle \tilde{\eta} \rangle, \frac{1}{2}\sum_{k=0}^n q^{\binom{k}{2}} \qbinom{n}{k}_q \right)
\text{ and }
\left(\CMP(n), \langle \tilde{\eta} \rangle, |\CMP(n)|_q\right)
\]
are CSP-triples, where $\langle \tilde{\eta} \rangle$
is the cyclic group of order $n$ that acts on $\CMP(n)$.
\end{theorem}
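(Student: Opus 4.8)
The plan is to prove the cyclic sieving for the cleaner polynomial $\frac{1}{2}B_n(q)$, where $B_n(q)\coloneqq\sum_{i=0}^{n} q^{\binom{i}{2}}\qbinom{n}{i}_q$, and then transfer the result to $|\CMP(n)|_q$. Since the CSP only probes evaluations at $n$-th roots of unity and \cref{lem:mobiusSameQDistr} gives $|\CMP(n)|_q \equiv \frac{1}{2}B_n(q) \pmod{q^n-1}$, the two polynomials agree at every $n$-th root of unity; hence it suffices to establish the CSP for the first triple, and the second follows at once. I also record that the identity $(A)=(B)$ of \cref{lem:bwqpoly} yields $\frac{1}{2}B_n(q) = \prod_{j=1}^{n-1}(1+q^j)$, a genuine polynomial with non-negative integer coefficients whose value at $q=1$ is $2^{n-1}=|\CMP(n)|$, as a CSP polynomial must satisfy.

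Now fix $k\in[n]$ and set $d\coloneqq\gcd(k,n)$ and $m\coloneqq n/d$, so that $\omega_n^{k}$ is a primitive $m$-th root of unity. The value $B_n(\omega_n^{k})$ is exactly the one computed in the proof of \cref{prop:mobiusCSPonWords}, namely $2^{d}$ when $m$ is odd and $0$ when $m$ is even. Hence $\frac{1}{2}B_n(\omega_n^{k})$ equals $2^{d-1}$ for $m$ odd and $0$ for $m$ even, and it remains to match these numbers with the fixed-point counts $|\{x\in\CMP(n): \tilde\eta^{\circ k}\cdot x = x\}|$.

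Because $M\colon\OBW(n)\to\CMP(n)$ is an $\eta$-equivariant bijection (by construction $\tilde\eta = M\circ\eta\circ M^{-1}$), this count equals the number of odd-parity words of $\BW(n)$ fixed by $\eta^{\circ k}$. \cref{lem:binWordMobiusFixedPoints} already shows $\eta^{\circ k}$ fixes $2^{d}$ words when $m$ is odd and none when $m$ is even, so only the odd-parity half needs isolating. Reusing the block analysis of that lemma, for $m$ odd a fixed word is determined by its first block $\beta_1\in\{0,1\}^{d}$, every other block being $\beta_1$ or its complement $\hat\beta_1$. Writing $p$ for the parity of $\beta_1$ and $b$ for the number of blocks equal to $\hat\beta_1$, summing the block parities gives total parity $m p + b d \equiv p + b d \pmod 2$ (using that $m$ is odd), which is odd for exactly one of the two values $p\in\{0,1\}$; since $2^{d-1}$ of the $2^{d}$ choices of $\beta_1$ realize each value of $p$, exactly $2^{d-1}$ fixed words are odd. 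This agrees with $\frac{1}{2}B_n(\omega_n^{k})$ for both parities of $m$, so the CSP holds.

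The main obstacle is precisely this parity refinement: \cref{lem:binWordMobiusFixedPoints} controls only the total number of fixed words, and the most natural splitting tool, the complementation $\bvec\mapsto\hat\bvec$ (which commutes with $\eta^{\circ k}$ and is fixed-point-free), pairs each fixed word with one of \emph{opposite} parity only when $n$ is odd; when $n$ is even it preserves parity and gives no information. What saves the even case is the observation that $m$ odd forces $d=n/m$ to be even, so complementing a length-$d$ block is parity-neutral and the constant term $bd$ in the parity formula vanishes, whence the block computation delivers $2^{d-1}$ uniformly. The remaining ingredients, the transfer via \cref{lem:mobiusSameQDistr} and the root-of-unity evaluation borrowed from \cref{prop:mobiusCSPonWords}, are routine.
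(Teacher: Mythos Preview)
Your argument is correct and follows the paper's strategy: reduce to the first triple via \cref{lem:mobiusSameQDistr}, then transport the CSP on $\BW(n)$ from \cref{prop:mobiusCSPonWords} to $\OBW(n)\cong\CMP(n)$ via the defining equivariant bijection $M$. Where the paper simply asserts that the phenomenon on $\CMP(n)$ is ``half'' of that on $\BW(n)$, you actually justify this by the block-parity computation showing that exactly $2^{d-1}$ of the $2^{d}$ fixed words under $\eta^{\circ k}$ lie in $\OBW(n)$; this fills a genuine gap in the paper's one-line proof, and your observation that complementation alone only handles odd $n$ correctly identifies why the block argument is needed.
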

\begin{proof}
First, \cref{lem:mobiusSameQDistr} implies that both CSP-instances
are the same, up to choice of nice polynomial in $q$.
The map $M$ above shows that the cyclic sieving phenomenon on $\CMP(n)$ is simply half
of the cyclic sieving phenomenon in \cref{prop:mobiusCSPonWords}.
\end{proof}

\begin{remark}
There is an alternative way to prove CSP on $\CMP(n)$.
We view elements in $\CMP(n)$ as $(\xvec,\bvec)$, a starting point and a path with $2n$
steps. Let $\beta$ act on the binary word $\bvec$ by
cyclically shifting the path by two steps.
Clearly, $\beta$ does not preserve the set $\CMP(n)$, but it is fairly easy to prove that
\[
 \left(\CMP(n) \subset X, \langle \beta \rangle, |\CMP(n)|_q\right)
\]
is a subset-CSP-triple, where $X$ is chosen appropriately.
\cref{prop:subsetToCSP}, now implies the existence of a CSP
on $\CMP(n)$ with $|\CMP(n)|_q$ as CSP-polynomial.
\end{remark}

\section{Lyndon-like cyclic sieving}\label{sec:lyndon}

Most results on cyclic sieving regards a family
of combinatorial objects, where the cyclic group $C_n$ acts on a set $X_n$.
The main result of this paper is no exception.
In such cases, it is natural to pay extra attention to families where
the various fixed-points in $X_n$ under elements in $C_n$ are
in bijection with $X_k$ for some $k \leq n$.
This occur when the group action is some type of cyclic shift on words,
as we shall see.

\begin{definition}\label{def:shiftLikeCSP}
Let $\{(X_n, C_n, f_n(q))\}_{n=1}^\infty$ be a family of instances of the cyclic sieving phenomenon.
We say that the family is \defin{Lyndon-like} if for every pair of positive integers $m$, $n$, with $m|n$,
we have
\[
 f_{n/m}(1) = f_n\left( e^{\tfrac{2 \pi i}{m}} \right).
\]
By the definition of CSP, we have that
\[
 f_n\left( e^{\tfrac{2 \pi i}{m}} \right) = |\{ x \in X_n : g^{n/m}(x) = x \}|,
\]
where $\langle g \rangle = C_n$. Hence, the family is Lyndon-like if and only if
the number of elements in $X_n$ fixed under $g^{d}$ is equal to $|X_{d}|$, for
every $d|n$.
\end{definition}

Apart from \cref{thm:mainCSP} and \cref{thm:avlCSP}, there are several other Lyndon-like families of CSP.
Here we list a few others.
\begin{enumerate}
 \item Words of length $n$, in the alphabet $[k]$, with
 $f_n(q) = \sum_{w \in [k]^n} q^{\maj(w)}$ as the polynomial.
 \item In \cite{Uhlin2019}, a Lyndon-like CSP instance
 related to non-symmetric Macdonald polynomials is conjectured.
 Here, the family $f_n(q)$ is defined as $f_n(q) = \sum_{T \in NAF(n\lambda,k)} q^{\maj(T)}$,
 where $NAF(\lambda,k)$ is a certain set of
 \defin{non-attacking fillings} with  maximal entry at most $k$ and shape $\lambda$.
 This CSP generalizes the Lyndon-like CSP on words.
\end{enumerate}

\begin{lemma}\label{lem:lyndonParams}
Let $\{(X_n, C_n, f_n(q))\}_{n=1}^\infty$
be a Lyndon-like family of instances of the CSP.
Then there are unique non-negative integers $\{t_d\}_{d=1}^\infty$ such that
for every $n\geq 1$ we have
\[
 |X_n| = \sum_{d|n} d \cdot t_d.
\]
\end{lemma}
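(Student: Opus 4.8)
The plan is to read off the integers $t_d$ directly from the orbit structure of the $C_n$-action and to identify them with the number of orbits of each size. First I would fix $n$, write $\langle g\rangle = C_n$, and for each divisor $e\mid n$ let $N_n(e)$ denote the number of elements of $X_n$ whose $C_n$-orbit has size exactly $e$. Since $C_n$ is cyclic, every stabilizer has the form $\langle g^e\rangle$ with $e\mid n$, so all orbit sizes divide $n$, and an element $x$ satisfies $g^d(x)=x$ precisely when the size of its orbit divides $d$. Consequently, for every $d\mid n$,
\[
 |\{x\in X_n : g^d(x)=x\}| = \sum_{e\mid d} N_n(e).
\]

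Next I would feed in the Lyndon-like hypothesis. As recorded after \cref{def:shiftLikeCSP}, it states exactly that the left-hand side above equals $|X_d|$ for every $d\mid n$, so
\[
 |X_d| = \sum_{e\mid d} N_n(e), \qquad d\mid n.
\]
Möbius inversion over the lattice of divisors of $n$ then yields
\[
 N_n(e) = \sum_{e'\mid e}\mu(e/e')\,|X_{e'}|.
\]
The right-hand side does not involve $n$, so $N_n(e)$ is the same for every $n$ divisible by $e$; write $N(e)$ for this common value. This $n$-independence is really the heart of the lemma, and is the step I would expect to require the most care to phrase: one must note that all divisors entering the inversion already lie among the divisors of $n$, so the inversion is legitimate, and that the resulting formula is genuinely free of $n$.

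Finally I would observe that $N(e)$ counts elements lying in $C_n$-orbits of size exactly $e$, and these partition into orbits each of size $e$; hence $N(e)$ is a non-negative integer divisible by $e$. Setting $t_e := N(e)/e$, the number of $C_n$-orbits of size $e$, we obtain a non-negative integer, and summing over orbit sizes gives
\[
 |X_n| = \sum_{e\mid n} N_n(e) = \sum_{e\mid n} e\,t_e,
\]
as claimed. For uniqueness, any sequence satisfying $|X_n|=\sum_{d\mid n} d\,t_d$ for all $n\geq 1$ is forced, again by Möbius inversion, to obey $n\,t_n=\sum_{d\mid n}\mu(n/d)\,|X_d|$, which determines each $t_n$ uniquely. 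The identification $t_e = N(e)/e$ also makes the combinatorial content transparent and explains the terminology: $t_e$ counts the orbits of size $e$, the exact analogue of Lyndon classes in the word case.
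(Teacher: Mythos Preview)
Your proof is correct and follows essentially the same route as the paper: both identify $t_d$ with the number of $C_n$-orbits of size $d$, use that $g^d$ fixes exactly the elements in orbits whose size divides $d$, invoke the Lyndon-like hypothesis to rewrite this as $|X_d|$, and then appeal to M\"obius inversion. Your presentation makes the $n$-independence of the orbit-size counts $N_n(e)$ more explicit, but this is only a minor difference in exposition.
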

\begin{proof}
Let $O_{n,k}$ be the set of elements in $X_n$ that are in an
orbit of size $k$ under $g$.
We let $t_n \coloneqq \frac{1}{n}|O_{n,n}|$, so the identity we wish to prove is
via Möbius inversion equivalent to the right hand side of
\begin{equation}\label{eq:tdEqn}
 |X_n| = \sum_{d|n} d \cdot t_d \quad \Longleftrightarrow \quad |O_{n,n}| = \sum_{d|n} \mu\left(\frac{n}{d}\right) |X_d|.
\end{equation}
Now since the family is Lyndon-like, we have that for all $d|n$,
\begin{equation}\label{eq:tdEqn2}
  |X_d| = |\{ x \in X_n : g^d(x) = x \}| = \sum_{\substack{1\leq k \leq d \\ k|d } } |O_{n,k}|.
\end{equation}
Thus, combining \eqref{eq:tdEqn} and \eqref{eq:tdEqn2}, it suffices to show that
\[
 |X_n| = \sum_{\substack{1\leq d \leq n \\ d|n }}
  \mu\left(\frac{n}{d}\right)
  \sum_{\substack{1\leq k \leq d \\ k|d }}
  |O_{n,k}|.
  \]
Möbius inversion on the outer sum gives that
  \[
  |X_n| =
  \sum_{\substack{1\leq k \leq n \\ k|n }}
  |O_{n,k}|
\]
which is obviously true since every element in $X_n$ belongs to exactly one orbit of some size.
The parameters $t_n$ are by construction unique.
\end{proof}

We let the integers $t_d$ be called \emph{Lyndon parameters},
since if we choose $t_d$ to be the usual Lyndon numbers, we get that $|X_n| = 2^n$.
A CSP instance with these parameters can be constructed by considering binary words of length $n$,
with $C_n$ acting via cyclic shift.
The Lyndon words of length $n$ are then in bijection with $\frac{1}{n}|O_{n,n}|$ in the notation above ---
that is, they are representatives of orbits of size $n$ under cyclic shift.

\begin{lemma}
 There is no Lyndon-like cyclic sieving phenomenon where
 $C_n$ act on on some family of Catalan objects $\Cat(n)$.
 \end{lemma}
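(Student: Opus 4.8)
The plan is to argue by contradiction, reducing everything to the necessary cardinality condition of \cref{lem:lyndonParams}. Suppose $\{(\Cat(n), C_n, f_n(q))\}_{n=1}^\infty$ were a Lyndon-like family, where $\Cat(n)$ is any family of Catalan objects, so that $|\Cat(n)| = \frac{1}{n+1}\binom{2n}{n}$ is the $n$:th Catalan number. By \cref{lem:lyndonParams} there would then exist non-negative integers $\{t_d\}_{d=1}^\infty$ with $|\Cat(n)| = \sum_{d|n} d\cdot t_d$ for all $n$; by Möbius inversion these are forced to be $t_n = \frac{1}{n}\sum_{d|n}\mu(n/d)\,|\Cat(d)|$, and in particular every $t_n$ must be a non-negative integer.

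The decisive step is to find a single index at which this fails. Already $n=2$ suffices: using $|\Cat(1)|=1$ and $|\Cat(2)|=2$ we get
\[
 t_2 = \tfrac{1}{2}\bigl(|\Cat(2)| - |\Cat(1)|\bigr) = \tfrac{1}{2}(2-1) = \tfrac{1}{2},
\]
which is not an integer, contradicting \cref{lem:lyndonParams}. Hence no such family can exist.

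Equivalently, and perhaps more transparently, one can bypass \cref{lem:lyndonParams} and read off the obstruction directly from \cref{def:shiftLikeCSP}: taking $n=2$ and $d=1$, the Lyndon-like property would require the generator $g$ of $C_2$ to fix exactly $|\Cat(1)|=1$ of the two elements of $\Cat(2)$. But $g$ acts on the two-element set $\Cat(2)$ as an involution, whose number of fixed points has the same parity as $|\Cat(2)|=2$ and is therefore even; it can never equal $1$. I expect this parity remark to be the cleanest core of the argument, and I would state it as the decisive observation, with the $t_2$ computation as an alternative phrasing. There is no genuine technical obstacle once \cref{lem:lyndonParams} (or this parity observation) is available; the only point requiring care is to note that \emph{Catalan objects} constrains nothing beyond the cardinalities $|\Cat(n)| = \frac{1}{n+1}\binom{2n}{n}$, so that the contradiction at $n=2$ holds regardless of the combinatorial structure or of whatever $C_n$-action one attempts to impose.
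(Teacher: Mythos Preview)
Your proof is correct and follows essentially the same strategy as the paper: show that the cardinality constraints forced by \cref{lem:lyndonParams} fail at a small index. The paper happens to use $n=3$ (where $|\Cat(3)|=5$ forces $t_1\in\{2,5\}$, contradicting $t_1=|\Cat(1)|=1$), while your choice of $n=2$ is slightly cleaner and your parity reformulation is a nice touch.
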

 \begin{proof}
  We have that $|\Cat(1)|=1$ and $|\Cat(3)|=5$, so we must have $C_3$ acting on $5$ objects.
  But then, we must have $t_1=2$, $t_3=1$ or $t_1=5$, both of which is incompatible with $|\Cat(1)|=1$.
 \end{proof}

The goal of the remainder of this section is to prove the converse of \cref{lem:lyndonParams}.
That is, there is a Lyndon-like family of CSP instances for
any choice of Lyndon parameters.

\begin{proposition}\label{prop:lyndonCSP}
 For any sequence of Lyndon parameters $T=\{t_d\}_{d=1}^\infty$,
 there is a Lyndon-like family of instances of the CSP.
\end{proposition}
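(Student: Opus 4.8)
The plan is to construct, for any prescribed sequence of Lyndon parameters $T = \{t_d\}_{d=1}^\infty$, an explicit family $\{(X_n, C_n, f_n(q))\}$ that is both a genuine CSP family and Lyndon-like. The natural model is a disjoint union of ``colored necklace'' sets: for each $d$ I would take $t_d$ distinct colors, and for each such color introduce a primitive cyclic object of period exactly $d$. Concretely, set $X_n \coloneqq \{ (d, c, r) : d \mid n,\ 1 \leq c \leq t_d,\ 0 \leq r < d \}$, where $(d,c,r)$ should be thought of as the $r$-th cyclic rotation of the $c$-th primitive object of period $d$. The generator $g$ of $C_n$ acts by $g \cdot (d,c,r) = (d, c, r+1 \bmod d)$. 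Then each triple $(d,c,\cdot)$ forms a single $C_n$-orbit of size exactly $d$, so $|X_n| = \sum_{d \mid n} d \cdot t_d$, matching \cref{lem:lyndonParams}, and an element $(d,c,r)$ is fixed by $g^{k}$ if and only if $d \mid k$.

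With this action in hand, the Lyndon-like requirement $|\{x \in X_n : g^{n/m}(x) = x\}| = |X_{n/m}|$ holds essentially by construction: the elements fixed by $g^{n/m}$ are exactly those $(d,c,r)$ with $d \mid n/m$, and these are in obvious bijection with $X_{n/m}$ under the same indexing scheme (the residue $r$ ranges over $0,\dots,d-1$ identically in both sets, and the constraint $d \mid n/m$ is precisely the divisibility defining $X_{n/m}$). So the structural side of the statement follows immediately once the set and action are pinned down. The first thing I would do, therefore, is record this bijection carefully, noting that it is the identity on indices.

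The remaining task is to produce a polynomial $f_n(q)$ with non-negative integer coefficients, $f_n(1) = |X_n|$, making $(X_n, C_n, f_n(q))$ a CSP-triple. The cleanest route is \cref{prop:cspPolyFromGroupAction}: I would define $f_n(q)$ by prescribing its residue mod $q^n - 1$, namely $f_n(q) \equiv \sum_{\ell=0}^{n-1} a_\ell q^\ell$ where $a_\ell$ counts the $C_n$-orbits whose stabilizer-order divides $\ell$. Since every orbit of $(d,c,\cdot)$ has stabilizer-order $n/d$ (the stabilizer of a period-$d$ element inside $C_n$ has order $n/d$), I would set $a_\ell = \#\{(d,c) : d \mid n,\ 1 \le c \le t_d,\ (n/d) \mid \ell\}$. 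To get an \emph{honest} polynomial with non-negative coefficients and the correct value at $1$, I would take the canonical lift $f_n(q) \coloneqq \sum_{d \mid n} t_d \cdot \frac{q^n - 1}{q^{n/d} - 1}$; note $\frac{q^n-1}{q^{n/d}-1} = 1 + q^{n/d} + q^{2n/d} + \dots + q^{(d-1)n/d}$ has non-negative coefficients and evaluates to $d$ at $q = 1$, so $f_n(1) = \sum_{d\mid n} d\, t_d = |X_n|$, and one checks its residue mod $q^n-1$ reproduces the $a_\ell$ above, because $q^{jn/d} \bmod (q^n-1)$ just permutes exponents within the orbit. Then \cref{prop:cspPolyFromGroupAction} yields the CSP.

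The main obstacle I anticipate is verifying the equivalent ``stabilizer-order'' count cleanly, i.e.\ confirming that the coefficient extraction $f_n(q) \equiv \sum_\ell a_\ell q^\ell \pmod{q^n-1}$ with $a_\ell = \#\{(d,c): (n/d)\mid \ell\}$ really matches the orbit-stabilizer data, rather than just asserting it. The subtlety is that the term $\frac{q^n-1}{q^{n/d}-1}$ contributes $q^{jn/d}$ for $j=0,\dots,d-1$, and I must argue that summing these over all $(d,c)$ deposits exactly one unit into $a_\ell$ for each pair $(d,c)$ with $(n/d)\mid\ell$; this is a short but genuine bookkeeping check that the supports of the cyclotomic-like factors line up with the divisibility condition on $\ell$. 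Once that is confirmed, non-negativity and $f_n(1)=|X_n|$ are automatic, and the Lyndon-like identity has already been established structurally, completing the proof.
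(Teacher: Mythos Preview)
Your proposal is correct and follows essentially the same construction as the paper: build $X_n$ as a disjoint union, over $d\mid n$, of $t_d$ cyclic orbits of size $d$, verify the Lyndon-like fixed-point identity directly, and then produce $f_n(q)$ via \cref{prop:cspPolyFromGroupAction}. Your version is in fact a bit more explicit: you let the generator act by $r\mapsto r+1\bmod d$ (the paper shifts by $n/d$), and you write down the closed form $f_n(q)=\sum_{d\mid n} t_d\,(1+q^{n/d}+\dots+q^{(d-1)n/d})$ rather than merely invoking the existence of a CSP polynomial.
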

\begin{proof}
It is enough to construct $X_n$ and a $C_n$-action on $X_n$
with the properties in \cref{lem:lyndonParams}.
Let
\[
 X_n = \{(d,i,j) : d | n,\quad 1\leq i \leq t_k, \quad 1\leq j \leq d \}
\]
and let the generator $g$ act as $g \cdot (d,i,j) = (d,i,j+n/d)$, where the last coordinate is
taken modulo $d$. It is straightforward to show from the construction that for all $n$ and $k|n$
\[
 |X_n| = \sum_{d|n} d\cdot t_d \quad \text{ and } \quad  |X_k| = |\{ x \in X_n : g^k(x) = x \}|.
\]
We can then use $f_n(q)$ from \cref{prop:cspPolyFromGroupAction} to make $(X_n,C_n,f_n(q))$
into a CSP-triple.
\end{proof}

It is possible to biject the $C_n$-action in \cref{prop:lyndonCSP}  into
an action on certain (however, quite artificial) words of length $n$, where $C_n$ act by a one-step shift.
Thus, it is no accident that the examples we have given above are all of this form.

Also note that the Lyndon parameters uniquely define the family of instances of the CSP
in the following sense.
\begin{proposition}
 Let $(X_n,C_n,f_n(q))$ and $(Y_n,C_n,g_n(q))$ be two families of Lyndon-like instances of the CSP with
 the same Lyndon parameters. Then there are $C_n$-equivariant bijections $\psi_n : X_n \to Y_n$
 such that for all $g\in C_n$ and $x \in X_n$, $\psi_n(g \cdot x) = g \cdot \psi_n(x)$.
\end{proposition}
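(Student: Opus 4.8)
The plan is to show that, for each fixed $n$, both $X_n$ and $Y_n$ decompose as $C_n$-sets into exactly the same number of orbits of every possible size, and then to build $\psi_n$ orbit by orbit. Recall that every $C_n$-orbit has size $d$ for some $d\mid n$, and that an element lying in an orbit of size $d$ has stabilizer equal to the unique subgroup $\langle g^d\rangle$ of $C_n$ of order $n/d$. For $d\mid n$ let $N_{n,d}$ denote the number of $C_n$-orbits of size $d$ in $X_n$, so that $|O_{n,d}| = d\,N_{n,d}$ in the notation from the proof of \cref{lem:lyndonParams}. The key claim is that $N_{n,d} = t_d$ for every $d\mid n$; that is, the Lyndon-like hypothesis forces the entire orbit structure of $X_n$ to be dictated by the Lyndon parameters alone.

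To prove the claim I would argue by strong induction on $d$, reusing the computation inside \cref{lem:lyndonParams}. Since $g^d$ fixes precisely the elements whose stabilizer contains $\langle g^d\rangle$, equivalently those in an orbit of size $k$ with $k\mid d$, the Lyndon-like identity gives
\[
 |X_d| = |\{x\in X_n : g^d\cdot x = x\}| = \sum_{k\mid d} |O_{n,k}| = \sum_{k\mid d} k\,N_{n,k}.
\]
Applying the same orbit decomposition to the $C_d$-set $X_d$ itself yields $|X_d| = \sum_{k\mid d} k\,N_{d,k}$, where $N_{d,d} = t_d$ by definition. Comparing the two expressions and using the inductive hypothesis $N_{n,k} = N_{d,k} = t_k$ for all proper divisors $k\mid d$, the remaining top terms must agree, so $N_{n,d} = N_{d,d} = t_d$. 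Since $Y_n$ is governed by the same Lyndon parameters, the identical argument shows that $Y_n$ likewise has exactly $t_d$ orbits of size $d$ for each $d\mid n$.

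Finally I would construct $\psi_n$ by matching orbits. For each $d\mid n$ fix any bijection between the $t_d$ size-$d$ orbits of $X_n$ and the $t_d$ size-$d$ orbits of $Y_n$. For a matched pair, choose base points $x_0\in X_n$ and $y_0\in Y_n$ and set $\psi_n(g^j\cdot x_0)\coloneqq g^j\cdot y_0$. This is well defined and bijective on the orbit because both $x_0$ and $y_0$ have stabilizer $\langle g^d\rangle$: if $g^j\cdot x_0 = g^{j'}\cdot x_0$ then $d\mid (j-j')$, whence $g^{j-j'}\in\langle g^d\rangle=\mathrm{Stab}(y_0)$ and $g^j\cdot y_0 = g^{j'}\cdot y_0$. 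Equivariance is immediate from $\psi_n(g\cdot g^j x_0) = g^{j+1}y_0 = g\cdot\psi_n(g^j x_0)$, and extends to all of $C_n$ by taking powers. The disjoint union of these per-orbit bijections is the desired equivariant bijection $\psi_n\colon X_n\to Y_n$. The only genuine content is the orbit-count claim, which is essentially already contained in \cref{lem:lyndonParams}; everything afterwards is the standard classification of transitive cyclic group actions, and the single point requiring care is that the well-definedness of $\psi_n$ on each orbit relies on the coincidence of the two stabilizers $\langle g^d\rangle$.
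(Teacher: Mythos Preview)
Your proposal is correct and follows essentially the same approach as the paper: both arguments show that the orbit structure of $X_n$ under $C_n$ is completely determined by the Lyndon parameters, and then build $\psi_n$ by matching orbits of equal size. The only cosmetic difference is that the paper invokes M\"obius inversion on \eqref{eq:tdEqn2} to extract $|O_{n,d}|$ directly from the values $|X_k|$, whereas you unwind the same relation by strong induction on $d$; your version has the mild bonus of explicitly identifying $N_{n,d}=t_d$ rather than merely showing the counts agree for $X_n$ and $Y_n$, and your careful check that the per-orbit map is well defined via the common stabilizer $\langle g^d\rangle$ spells out what the paper leaves as ``map orbits to orbits in an equivariant manner.''
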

\begin{proof}
  First, we have that $|X_n| = |Y_n|$ for all $n$, by \cref{lem:lyndonParams}.
 From \cref{def:shiftLikeCSP}, it then follows that
 \[
  \{x \in X_n : g^{n/k} \cdot x =x \} = |X_k|=|Y_k| = \{y \in Y_n : g^{n/k}\cdot y=y \}.
 \]
By using Möbius inversion on \eqref{eq:tdEqn2},
it is clear that the number of $C_n$-orbits of size $d$ in $X_n$ is
equal to the number of $C_n$-orbits of size $d$ in $Y_n$.
We can then simply let $\psi_n$ map orbits to orbits in an equivariant manner.
\end{proof}

\begin{remark}
 Note that \cref{def:shiftLikeCSP} gives a method to \emph{computationally check}
 if a sequence of polynomials $\{f_n(q)\}_{n=1}^\infty$
 might be completed to a Lyndon-like family of CSP.
 In this case, one might be able to narrow down the search for a suitable group action.
\end{remark}

\begin{problem}
 Given  $T=\{t_d\}_{d=1}^\infty$, find a natural family $\{ f_n(q) \}_{n=1}^\infty$
 so that $f_n(q)$ is the sequence of polynomials in a Lyndon-like CSP family with Lyndon parameters $T$.
\end{problem}

\section{Homomesy under area shift}

There is a concept called homomesy that means that a statistic
has the same average in each orbit as it has in the full space.

\begin{theorem}
For the words from circular Dyck paths, $\inv(w)$ is homomesic with
respect to the action $\alpha$, i.e. rotation by 1.
\end{theorem}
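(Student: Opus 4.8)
The plan is to reduce the homomesy statement to the constancy of a single rotation sum. For a cyclic action $\langle\alpha\rangle$ of order $n$, summing a statistic over the $n$ iterates $\avec,\alpha\avec,\dots,\alpha^{n-1}\avec$ counts each member of the orbit of $\avec$ exactly $n/|\mathcal{O}|$ times, so the orbit average is $\tfrac1n\sum_{k=0}^{n-1}\inv(\alpha^{k}\avec)$. Hence it suffices to show that $\sum_{k=0}^{n-1}\inv(\alpha^{k}\avec)$ is independent of $\avec$, and then to read off the common value.

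First I would express $\inv$ directly in terms of the area sequence. Writing $(\xvec,\bvec)$ with $\xvec=(x_0,0)$ for the path attached to $\avec$, the quantity $\inv(\bvec)$ counts (north, east) pairs, i.e.\ $\sum_{r=1}^{n}(\text{east steps after the $r$-th north step})$. If the $r$-th north step sits at $x$-coordinate $p_r$, it is preceded by $p_r-x_0$ east steps and followed by $n-(p_r-x_0)$, so $\inv(\bvec)=n^2+nx_0-\sum_{r=1}^{n}p_r$. The geometry of the diagram supplies two relations: $p_r=r+n-a_r$ (the right boundary of row $r$ lies at $x=r+n$, and $a_r$ white cells lie between it and the path), and $x_0=n-a_n$ (forced because the final step is north, so the last north step occurs after all $n$ east steps, at $x=x_0+n=p_n=2n-a_n$). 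Substituting $\sum_r p_r=\binom{n+1}{2}+n^2-\sum_i a_i$ and then $x_0=n-a_n$ yields the closed formula
\[
 \inv(\avec)=\sum_{i=1}^{n}a_i - n\,a_n + n^2 - \binom{n+1}{2}.
\]

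With this formula the conclusion is a one-line cancellation. Under $\alpha$ the sum $\sum_i a_i$ and the constant $n^2-\binom{n+1}{2}$ are unchanged, while the final entry $a_n$ becomes $(\alpha^{k}\avec)_n$, which runs through all the entries $a_n,a_{n-1},\dots,a_1$ of $\avec$ as $k=0,\dots,n-1$; thus $\sum_{k=0}^{n-1}(\alpha^{k}\avec)_n=\sum_{i=1}^n a_i$ by a mere reindexing, valid for any orbit size. The two occurrences of $\sum_i a_i$ therefore cancel and
\[
 \sum_{k=0}^{n-1}\inv(\alpha^{k}\avec)=n\sum_i a_i - n\sum_i a_i + n^{3} - n\binom{n+1}{2}=n^{3}-n\binom{n+1}{2},
\]
independent of $\avec$. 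Dividing by $n$, every orbit has average $n^2-\binom{n+1}{2}=\binom{n}{2}$, which is the asserted homomesy. Phrased differently, $\sum_i a_i$ is $\alpha$-invariant and the $\alpha$-variation of $\inv$ is carried entirely by the term $nx_0=n(n-a_n)$, whose orbit sum $n^2-\sum_i a_i$ is exactly what absorbs the invariant $\sum_i a_i$.

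The main obstacle is the second paragraph: establishing the two geometric identities $p_r=r+n-a_r$ and $x_0=n-a_n$ rigorously from the definition of the area sequence and the normalization $b_{2n}=1$, including the off-by-one placement of the right boundary diagonal $y=x-(n+2)$. Everything afterwards is formal, and the argument is insensitive to the orbit size precisely because $\sum_{k=0}^{n-1}(\alpha^{k}\avec)_n=\sum_i a_i$ holds for every period dividing $n$. I expect the same scheme to extend verbatim to $\CDP(n,w)$, with the last area entry $a_n$ again providing the only non-invariant contribution to $\inv$.
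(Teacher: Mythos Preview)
Your argument is correct. The geometric identities $p_r=r+n-a_r$ and $x_0=n-a_n$ check out against the paper's conventions (e.g.\ the path in Figure~1), and the rest is the advertised cancellation.

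The paper proceeds along the same structural line but with a different choice of coordinates. Instead of the area sequence it uses the gap vector $z=(z_1,\dots,z_n)$, where $z_i$ counts the zeros between the $(i-1)$st and $i$th one; in your notation $z_i=1+a_{i-1}-a_i$, and one verifies directly that $\alpha$ rotates $z$ cyclically. The paper then writes $\inv(w)=\sum_i (n-i+1)z_i$ and averages over rotations to get the constant $\binom{n+1}{2}$. Note the discrepancy with your $\binom{n}{2}$: the paper's formula (and its worked example $\inv(0011)=4$) is counting $(0,1)$ pairs rather than $(1,0)$ pairs, i.e.\ the complementary statistic $n^2-\inv$ in the convention stated earlier in the paper. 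Since the two statistics sum to the constant $n^2$, homomesy of one is equivalent to homomesy of the other, so both arguments prove the theorem. Your approach has the advantage of working directly in the coordinates in which $\alpha$ is defined and isolating the single non-invariant term $n x_0$; the paper's $z$-vector version is a touch shorter and makes explicit that the CDP constraints play no role beyond forcing $\sum_i z_i=n$.
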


\begin{example} For $n=2$, we have the binary strings ending with a one divided into orbits by $\alpha$:
$\{0011,1001\}, \{0101\}$. The inversion numbers are $\inv(0011)=4$, $\inv(1001)=2$, $\inv(0101)=3$,
so the average for each orbit is 3. Note that the last orbit corresponds to two different CDP,
but the inversion number is the same, so it is not important to keep track of here.
\end{example}

\begin{proof}
We only have to look at the orbit for any given word $w$, that has $n$ zeros, $n$
ones and ends in a one. It turns out that it is here not important that it comes from CDP.
Let $z(w)=(z_1,\dotsc,z_{n})$ be the
number of zeros between two consecutive ones in $w$, i.e. $z_i$ is the number of zeros between the
$(i-1)$th and $i$th one (with $z_1=$number of starting zeros).
For example $w=011001$ has $z(w)=(1,0,2)$.
Then $\inv(w)=\sum_{i=1}^n (n-i+1)z_i$.
The action $\alpha$ is rotating $z$ one step.
Thus summing over an orbit of size $n$ we get
\[
\sum_i\sum_j (n-j+1)z_i=\sum_i z_i\binom{n+1}{2}=n\cdot\binom{n+1}{2}.
\]
Hence the average over the orbit is $\binom{n+1}{2}$.
If the orbit is of size shorter than $n$ it is still divisible by $n$ and the same calculation holds.
\end{proof}

The number of inversions is not homomesic with respect to the other two actions we have discussed;
shift by 2 of a binary word or shift of the area sequence.

\section{Further research directions}

Computer experiments suggests that the
cyclic sieving phenomenon in \cref{thm:mainCSP} can be refined,
by taking the number of valleys of the circular Dyck path into account.
A \emph{valley} of a (circular) Dyck path is an index $i \in [n]$ such that $a_{i+1} \leq a_i$
in its area sequence, where the index is taken mod $n$.
In \cref{ex:circularDyckPath}, $2$, $4$, $6$ are valleys.

Furthermore, one can also consider Schröder paths where diagonal steps are allowed.
We do not have an enumeration formula for these, it is an interesting open problem
as that would count certain circular vertical-strip
LLT polynomials, see \cite{AlexanderssonPanova2016}.

One could also introduce $\CMP(n,w) \subseteq \CDP(n,w)$
as the set of circular area sequences, satisfying the additional Möbius restriction
in \eqref{eq:mobiusCondition}.
Using the machinery above with some modification,
it should be fairly easy to derive analogous expressions for enumeration.

\subsection*{Acknowledgement}

P.A. is funded by the \emph{Knut and Alice Wallenberg Foundation} (2013.03.07).
S.L. and S.P. were supported by the Swedish Research Council grant 621-2014-4780.

\bibliographystyle{alphaurl}
\bibliography{bibliography}
\end{document}